\DeclareSymbolFont{cyrletters}{OT2}{wncyr}{m}{n}
\DeclareMathSymbol{\Sha}{\mathalpha}{cyrletters}{"58}
\newtheorem{theorem}{Theorem}[section]
\newtheorem{lemma}[theorem]{Lemma}
\newtheorem{corollary}[theorem]{Corollary}
\newtheorem{definition}[theorem]{Definition}
\newtheorem{remark}[theorem]{Remark}
\newtheorem{proposition}[theorem]{Proposition}
\begin{document}
\title{\textsc{Heegner Point Kolyvagin System and Iwasawa Main Conjecture }}

\author{\textsc{Xin Wan}}
\date{}

\maketitle
\begin{abstract}
In this paper we prove an anticyclotomic Iwasawa main conjecture proposed by Perrin-Riou for Heegner points when the global sign is $-1$, using a recent work of the author on one divisibility of Iwasawa-Greenberg main conjecture for Rankin-Selberg $p$-adic $L$-functions. As a byproduct we also prove the equality for the above mentioned main conjecture under some local conditions, and an improvement of C.Skinner's result on a converse of Gross-Zagier-Kolyvagin theorem.
\end{abstract}
\section{Introduction}
Let $p\geq 5$ be an odd prime. Let $E$ be an elliptic curve over $\mathbb{Q}$ with square free conductor $N$. Let $T$ be the $p$-adic Tate-module of $E$ and $V=T\otimes_{\mathbb{Z}_p}\mathbb{Q}_p$. Suppose $E$ has good ordinary reduction at $p$. Let $\mathcal{K}$ be a quadratic imaginary field where $p$ and $2$ are split (that $2$ being split is put in \cite{WAN} due to lack of knowledge of Howe duality in characteristic $2$. Seems recent preprint of Wee Teck Gan solved this issue). Let $G_\mathcal{K}$ and $G_\mathbb{Q}$ be the absolute Galois group of $\mathcal{K}$ and $\mathbb{Q}$. Suppose that the representation ${T}|_{G_\mathcal{K}}$ contains $\mathrm{GL}_2(\mathbb{Z}_p)$ (This is needed for the Euler system argument). Consider either of the following assumptions:
\begin{itemize}
\item[(1)] there is at least one prime $q|N$ non-split in $\mathcal{K}$ and that the global root number of $E$ over $\mathcal{K}$ is $-1$;
\item[(2)] for each $q|N$ either $q$ is split in $\mathcal{K}$, or $q$ is ramified and $E$ has non-split multiplicative reduction at $q$, and suppose we have at least one such ramified prime. This implies that the root number of $E$ over $\mathcal{K}$ is $-1$.
\end{itemize}
We fix $\iota_p:\mathbb{C}\simeq \mathbb{C}_p$ and let ${v}$ be the prime of $\mathcal{K}$ above $p$ determined by $\iota_p$. Let $\bar{v}$ be its conjugation. It is well known that there is a Galois stable $1$-dimensional subspace $V^+\subset V$ such that the Galois action of $G_p$ on $V^+$ is given by the cyclotomic character twisted by an unramified character. Let $V^-=V/V^+$, $T^+=V^+\cap T$, $T^-=T/T^+$. By Taniyama-Shimura conjecture \cite{Wiles95} we know that $E$ is associated to a weight 2 cuspidal eigenform with the automorphic representation $\pi=\pi_f$.\\

\noindent Let $\mathcal{K}_\infty$ be the unique $\mathbb{Z}_p^2$-extension of $\mathcal{K}$ unramified outside $p$. The complex conjugation $c$ acts on $\Gamma:=\mathrm{Gal}(\mathcal{K}_\infty/\mathcal{K})$. We let $\Gamma^{\pm}$ be the $1$-dimensional $\mathbb{Z}_p$-space on which $c$-acts as $\pm 1$. Let  $\mathcal{K}_\infty^{\pm}$ be the $\mathbb{Z}_p$-extension of $\mathcal{K}$ such that $\mathrm{Gal}(\mathcal{K}_\infty^{\pm}/\mathcal{K})=\Gamma^{\pm}$. Take topological generators $\gamma^{\pm}$ of $\Gamma^{\pm}$. We let $\mathcal{K}_k$ be the unique subextension of $\mathcal{K}_\infty^-$ with $\mathrm{Gal}(\mathcal{K}_k/\mathcal{K})\simeq \mathbb{Z}/p^k\mathbb{Z}$. Define $\Lambda_\mathcal{K}:=\mathbb{Z}_p[[\Gamma_\mathcal{K}]]$, $\Lambda=\Lambda^-=\mathbb{Z}_p[[\Gamma_\mathcal{K}^-]]$. This $\Lambda\simeq \mathbb{Z}_p[[W]]$ by the $\mathbb{Z}_p$-map sending $\gamma^-$ to $W+1$. Let $\Psi_\mathcal{K}$ be the composition $G_\mathcal{K}\twoheadrightarrow \Gamma_\mathcal{K}\hookrightarrow \Lambda_\mathcal{K}^\times$ and $\Psi: G_\mathcal{K}\twoheadrightarrow \Gamma_\mathcal{K}^-\hookrightarrow \Lambda^\times.$ Let $\varepsilon:\mathcal{K}^\times\backslash \mathbb{A}_\mathcal{K}^\times\rightarrow \Lambda_\mathcal{K}^\times$ be the composition of $\Psi_\mathcal{K}$ with the reciprocity map of class field theory (normalized by the geometric Frobenius). Let $\mathbf{T}$ be the $\Lambda$-adic Galois representation $T\otimes_{\mathbb{Z}_p}\Lambda(\Psi)$. Here $\Lambda(\Psi)$ means the $\Lambda$-adic character with Galois action given by $\Psi$. We define an involution $\iota:\Lambda\simeq \Lambda$ to be the $\mathbb{Z}_p$-morphism sending $(1+W)$ to $(1+W)^{-1}$. We define $\Lambda_\mathcal{K}^*=\mathrm{Hom}_{\mathbb{Z}_p}(\Lambda_\mathcal{K},\mathbb{Q}_p/\mathbb{Z}_p)$ and $\Lambda^*=\mathrm{Hom}_{\mathbb{Z}_p}(\Lambda,\mathbb{Q}_p/\mathbb{Z}_p)$. Define $A:=T\otimes_{\mathbb{Z}_p}\mathbb{Q}_p/\mathbb{Z}_p$ and  $\mathbf{A}:=T\otimes\Lambda(-\Psi)\otimes_\Lambda\Lambda^*$ (convention is slightly different from Howard's). \\

\noindent\underline{Selmer Conditions}:\\
\noindent We define some Selmer conditions ($\mathcal{F},\mathcal{F}_v,\mathcal{F}_{\bar{v}}, v, \bar{v}, \mathrm{str})$ for various Galois representations $T$, $\mathbf{T}$, $\mathbf{A}$, etc (we write $T$ for example in the following). Let:
$$(\mathcal{F})_w=\left\{\begin{array}{ll}\mathrm{ker}\{H^1(\mathcal{K}_w,T)\rightarrow H^1(I_w,T)\}, &w\nmid p\\ \mathrm{ker}\{H^1(\mathcal{K}_w,T)\rightarrow H^1(K_w, T^-)\}& w|p\end{array}\right.$$
$$(\mathcal{F}_v)_w=\left\{\begin{array}{ll}\mathcal{F}_w, &w\not=\bar{v}\\0& w=\bar{v}\end{array}\right.$$
$$(\mathcal{F}_{\bar{v}})_w=\left\{\begin{array}{ll}\mathcal{F}_w, &w\not={v}\\0& w={v}\end{array}\right.$$
$$(v)_w=\left\{\begin{array}{ll}\mathcal{F}_w, &w\nmid p\\0& w=\bar{v}\\ H^1(\mathcal{K}_w,-)& w=v\end{array}\right.$$
$$(\bar{v})_w=\left\{\begin{array}{ll}\mathcal{F}_w, &w\nmid p\\0& w={v}\\ H^1(\mathcal{K}_w,-)& w=\bar{v}\end{array}\right.$$
$$(str)_w=\left\{\begin{array}{ll}\mathcal{F}_w, &w\nmid p\\0& w|p\end{array}\right.$$
We define the local Selmer condition $\mathcal{F}'$ by replacing the $w|p$ local conditions for $\mathcal{F}$ by $\ker\{H^1(\mathcal{K}_w,T)\rightarrow H^1(I_w, T^-)\}$. We also define the local Selmer condition $v'$ by replacing the local condition at $\bar{v}$ by $\ker\{H^1(\mathcal{K}_{\bar{v}},T)\rightarrow H^1(I_{\bar{v}}, T^-)\}$. We define $\bar{v}'$ similarly.

We define the corresponding Selmer groups $H^1_\mathcal{F}(\mathcal{K},-)$ to be the inverse image in $H^1(\mathcal{K},-)$ of $\prod_w \mathcal{F}_w$ under the localization map and $X:=H^1_\mathcal{F}(\mathcal{K},\mathbf{A})^*$. We also define Selmer groups for other Selmer conditions in a similar way.
\subsection{Perrin-Riou's Conjecture}
Let $\Phi: X_0(N)\rightarrow E$ be a modular parameterization given by \cite{Wiles95}. Let $\mathcal{K}[n]$ be the ring class field of $\mathcal{K}$ of conductor $n$.
In \cite{Howard} he constructed a Kolyvagin system $\{\kappa^{Hg}_n\}_n$ with $n$ running over a set of square-free products of degree two primes in $\mathcal{K}$ and $\kappa_1^{Hg}\in H^1(\mathcal{K},\mathbf{T})$. We also adopt Howard's notation that $\kappa_1$ being the image of $\kappa_1^{Hg}$ in $H^1(\mathcal{K},T)$. These depend on the choice of $\Phi$. We refer to \cite{Howard} for the definition of a Kolyvagin system. He proved the following theorem:
\begin{theorem}\label{1.1}
$H_\mathcal{F}^1(\mathcal{K},\mathbf{T})$ is a torsion-free rank one $\Lambda$-module. Write $\mathrm{char}$ for characteristic ideal. Then there is a torsion $\Lambda$-module $M$ such that $\mathrm{char}(M)=\mathrm{char}(M)^\iota$ and a pseudo-isomorphism:
$$X\sim\Lambda\oplus M\oplus M.$$
Also, under assumption (1) we have $\mathrm{char}(M)\supseteq\mathrm{char}(H_\mathcal{F}^1(\mathcal{K},\mathbf{T})/\Lambda\kappa_1^{Hg})$.
\end{theorem}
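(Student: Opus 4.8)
The plan is to deduce all three assertions from the general structure theory of Kolyvagin systems over the regular local ring $\Lambda$ (in the form of Mazur--Rubin, adapted to the anticyclotomic $\Lambda$-adic situation), together with $\Lambda$-adic Poitou--Tate global duality; the only inputs proper to Heegner points are the system $\{\kappa^{Hg}_n\}_n$ constructed in \cite{Howard} and the Cornut--Vatsal non-vanishing theorem. The hypothesis that $T|_{G_\mathcal{K}}$ contains $\mathrm{GL}_2(\mathbb{Z}_p)$ is used throughout to know that $\bar T=E[p]$ is absolutely irreducible with image large enough both to run the Chebotarev arguments that produce Kolyvagin primes and to force the vanishing of the $H^0$'s that would otherwise obstruct the duality and torsion-freeness statements.

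First I would compute the core rank of the Selmer structure $\mathcal{F}$ attached to $\bar T$; this governs the $\Lambda$-adic $\mathbf{T}$, since $\mathbf{T}/\mathfrak{m}_\Lambda\mathbf{T}=\bar T$. A short computation with the global Euler characteristic formula, in which the archimedean local conditions and the ordinary conditions at $v$ and $\bar v$ enter and the sign of the functional equation being $-1$ is crucial (this is exactly what distinguishes the core rank $1$ case from the definite, core rank $0$ case), shows the core rank equals $1$. Granting core rank one and the fact that $\kappa_1^{Hg}$ is not $\Lambda$-torsion (Cornut--Vatsal), the Mazur--Rubin machinery immediately gives that $H^1_\mathcal{F}(\mathcal{K},\mathbf{T})$ has $\Lambda$-rank one; torsion-freeness then follows because the big-image hypothesis forces $H^1(\mathcal{K},\mathbf{T})$ itself to be $\Lambda$-torsion-free, hence so is the submodule $H^1_\mathcal{F}(\mathcal{K},\mathbf{T})$. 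This is assertion (1).

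For the structure of $X$, I would apply $\Lambda$-adic Poitou--Tate duality to $\mathcal{F}$ and to its dual Selmer structure, using that there is a perfect pairing relating $\mathbf{T}$ and $\mathbf{A}$ that is semilinear with respect to $\iota$ (it comes from the Weil pairing on $T$, the $\iota$ appearing because complex conjugation inverts $\Gamma_\mathcal{K}^-$). This produces an exact sequence relating $X$, $H^1_\mathcal{F}(\mathcal{K},\mathbf{T})$ and a finite collection of local terms that are pseudo-null, whence $\mathrm{rank}_\Lambda X=\mathrm{rank}_\Lambda H^1_\mathcal{F}(\mathcal{K},\mathbf{T})=1$ and $X\sim\Lambda\oplus X_{\mathrm{tors}}$. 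To obtain $X_{\mathrm{tors}}\sim M\oplus M$ with $\mathrm{char}(M)=\mathrm{char}(M)^\iota$, I would construct, out of cup products and the connecting maps of the Selmer complex, a perfect $\Lambda$-valued pairing on a module pseudo-isomorphic to $X_{\mathrm{tors}}$ which is $\iota$-semilinear (from the self-duality just mentioned, which carries the $\iota$) and alternating (the Weil pairing being alternating) — a $\Lambda$-adic Cassels--Tate/Flach pairing. The classification of such pairings over the two-dimensional regular local ring $\Lambda$, up to pseudo-isomorphism, forces the elementary divisors to occur in pairs, giving the square shape, while $\iota$-semilinearity gives the self-duality of $\mathrm{char}(M)$.

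Finally, for the divisibility under assumption (1): the generalized Heegner hypothesis (a non-split prime $q\mid N$, global root number $-1$) places us on the Shimura curve attached to an indefinite quaternion algebra, which is precisely the setting in which $\{\kappa^{Hg}_n\}$ obeys the first and second local reciprocity laws. I would then run the Kolyvagin-derivative argument: choose a finite set of Kolyvagin primes $\ell_1,\dots,\ell_k$ by Chebotarev so that the localizations of the relevant cohomology classes are as large as the structure of $X$ permits, and feed the reciprocity laws into the Mazur--Rubin bound to obtain $\mathrm{char}(X_{\mathrm{tors}})\subseteq\mathrm{char}\!\big(H^1_\mathcal{F}(\mathcal{K},\mathbf{T})/\Lambda\kappa_1^{Hg}\big)^2$ up to pseudo-null factors; combined with $\mathrm{char}(X_{\mathrm{tors}})=\mathrm{char}(M)^2$ this yields $\mathrm{char}(M)\supseteq\mathrm{char}\!\big(H^1_\mathcal{F}(\mathcal{K},\mathbf{T})/\Lambda\kappa_1^{Hg}\big)$. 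I expect the main obstacle to be exactly this last step: the Kolyvagin machinery natively bounds lengths and Fitting ideals localized at one height-one prime at a time, and upgrading this to a genuine characteristic-ideal divisibility while peeling off precisely one square demands the $M\oplus M$ structure of the previous paragraph together with careful tracking of the Euler factors at $\ell_1,\dots,\ell_k$ and of the pseudo-null discrepancies; this bookkeeping, and the verification of the core-rank computation in the ramified-$q$ cases, is where the real work lies.
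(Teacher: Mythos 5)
Your outline is essentially a re-derivation of Howard's Theorem 2.2.10, which is what the paper simply cites for this statement; the genuine gap is that you never supply the one piece of content that is actually new here. Under assumptions (1) and (2) not every prime dividing $N$ splits in $\mathcal{K}$, so Howard's construction and verification of the Heegner point Kolyvagin system (done on $X_0(N)$ under the strict Heegner hypothesis) do not apply as stated. You attribute the system $\{\kappa_n^{Hg}\}$ to \cite{Howard} and then run the machinery, but in this generality the classes must be built from CM points on a Shimura curve as in \cite{CV}, the nontriviality of the $\Lambda$-adic family needs Cornut--Vatsal rather than \cite{Cornut} (this you do invoke), and --- the step you never address --- one must check that the classes satisfy the local conditions defining $\mathcal{F}$ at the non-split primes $q\mid N$, i.e.\ the analogue of \cite[Lemma 2.3.4, case (ii)]{Howard}. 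The paper's proof consists precisely of these verifications: at such a $q$ every prime of $\mathcal{K}[n]$ above $q$ splits completely in $\mathcal{K}_k[n]$, so inertia does not grow up the anticyclotomic tower and the classes are unramified at $q$. Without this, the classes cannot be fed into the Mazur--Rubin/Howard machinery at all, so none of the three assertions is established in the setting of the theorem.

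Two further points. First, your core-rank step is not right as stated: the global Euler characteristic computation for this self-dual Selmer structure is purely local at $p$ and $\infty$ (the contribution $+2$ from $v,\bar v$ cancels the $-2$ from the complex place), the analytic root number does not enter it, and in particular "verifying the core-rank computation in the ramified-$q$ cases" is a non-issue since primes away from $p$ do not affect it. What the sign $-1$ actually buys is the availability (indefinite quaternion algebra) and nonvanishing of the Heegner family; the rank-one statement for $H^1_\mathcal{F}(\mathcal{K},\mathbf{T})$ then comes from the Kolyvagin-system bound itself, as in \cite{Howard} and \cite{MR}, not from an Euler-characteristic count with the sign fed in. Second, your route to $X\sim\Lambda\oplus M\oplus M$ with $\mathrm{char}(M)=\mathrm{char}(M)^\iota$ via a perfect alternating $\iota$-semilinear $\Lambda$-adic Cassels--Tate/Flach pairing is genuinely different from Howard's argument (which specializes $\mathbf{T}$ at height-one primes and applies the structure theorem for Kolyvagin systems over the resulting discrete valuation rings, keeping track of complex conjugation); if you take the pairing route you still owe the construction of the pairing, its perfection and alternating property up to pseudo-isomorphism, and the semilinear-algebra argument that this forces both the squaring and the $\iota$-invariance --- none of which follows from the Weil pairing alone.
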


Howard proved it under the assumption that all primes of $N$ are split in $\mathcal{K}$. However we will see in the next section that it is true under our assumptions as well. The first main result of our paper is the following.
\begin{theorem}\label{1.2}
If assumption (2) is true then
$$\mathrm{char}_\Lambda(M)= \mathrm{char}_\Lambda(H_\mathcal{F}^1(\mathcal{K},\mathbf{T})/\Lambda\kappa_1^{Hg}).$$
If assumption (1) is true then the above equality holds as ideals of $\Lambda\otimes_{\mathbb{Z}_p}\mathbb{Q}_p$.
\end{theorem}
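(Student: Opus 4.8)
The plan is to deduce Theorem~\ref{1.2} from the Iwasawa--Greenberg main conjecture for Rankin--Selberg $p$-adic $L$-functions together with Howard's structure theorem (Theorem~\ref{1.1}) and an explicit reciprocity law relating $\kappa_1^{Hg}$ to the $p$-adic $L$-function. First I would introduce the two-variable $p$-adic $L$-function $\mathcal{L}_{\pi,\mathcal{K}}\in\Lambda_\mathcal{K}$ attached in \cite{WAN} to $\pi=\pi_f$ and the family of anticyclotomic Hecke characters over $\mathcal{K}$, and restrict it to the anticyclotomic line to obtain an element $\mathcal{L}\in\Lambda$. The one-divisibility proved in \cite{WAN} gives
$$\mathrm{char}_\Lambda\big(X^{\mathcal{F}_v}\big)\ \big|\ (\mathcal{L}),$$
or rather the appropriate divisibility between the characteristic ideal of a Greenberg Selmer group and $(\mathcal{L})$; here the sign being $-1$ forces the relevant Selmer group to be the one cutting out $M$ (up to the pseudo-null ambiguity in Theorem~\ref{1.1}). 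On the other side, an explicit reciprocity law --- of the type proved by Castella, or following Bertolini--Darmon--Prasanna and its $p$-adic variants --- identifies the image of the Heegner class $\kappa_1^{Hg}$ under the dual exponential / big logarithm map at $v$ with $\mathcal{L}$ up to a unit, hence yields
$$\mathrm{char}_\Lambda\big(H^1_\mathcal{F}(\mathcal{K},\mathbf{T})/\Lambda\kappa_1^{Hg}\big)\ \big|\ (\mathcal{L})$$
as well, together with a comparison identifying $\mathrm{char}_\Lambda(X^{\mathcal{F}_v})$ with $\mathrm{char}_\Lambda(H^1_\mathcal{F}(\mathcal{K},\mathbf{T})/\Lambda\kappa_1^{Hg})$ via Poitou--Tate global duality (the exact sequence relating the $\mathcal{F}$, $\mathcal{F}_v$ and $v$ Selmer groups, using that under the root number $-1$ hypothesis the "$\mathrm{str}$'' and "$v$'' Selmer groups are controlled).

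The key mechanism is then the following squeeze. From Theorem~\ref{1.1} we have $\mathrm{char}_\Lambda(M)\supseteq \mathrm{char}_\Lambda(H^1_\mathcal{F}(\mathcal{K},\mathbf{T})/\Lambda\kappa_1^{Hg})$ under assumption (1), and this containment will be upgraded to assumption (2) in the previous section as remarked after Theorem~\ref{1.1}. Combining the Poitou--Tate comparison with the structure $X\sim\Lambda\oplus M\oplus M$, one gets $\mathrm{char}_\Lambda(X^{\mathcal{F}_v})$ equal (up to the $\iota$-symmetric pseudo-null factor) to $\mathrm{char}_\Lambda(M)$ times the characteristic ideal of the quotient measuring the failure of $\kappa_1$ to generate, so that the divisibility from \cite{WAN} reads
$$\mathrm{char}_\Lambda(M)\cdot \mathrm{char}_\Lambda\big(H^1_\mathcal{F}(\mathcal{K},\mathbf{T})/\Lambda\kappa_1^{Hg}\big)^{-1}\cdot(\ast)\ \big|\ (\mathcal{L}),$$
while the explicit reciprocity law gives $(\mathcal{L})\ \big|\ \mathrm{char}_\Lambda\big(H^1_\mathcal{F}(\mathcal{K},\mathbf{T})/\Lambda\kappa_1^{Hg}\big)$. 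Feeding Howard's divisibility $\mathrm{char}_\Lambda(M)\supseteq \mathrm{char}_\Lambda(H^1_\mathcal{F}(\mathcal{K},\mathbf{T})/\Lambda\kappa_1^{Hg})$ back in, all the divisibilities become equalities, and one reads off
$$\mathrm{char}_\Lambda(M)=\mathrm{char}_\Lambda\big(H^1_\mathcal{F}(\mathcal{K},\mathbf{T})/\Lambda\kappa_1^{Hg}\big).$$
Under assumption (1) the $p$-adic $L$-function in \cite{WAN} is only pinned down after inverting $p$ (the construction there produces an element of $\Lambda\otimes_{\mathbb{Z}_p}\mathbb{Q}_p$, or the interpolation range only controls it up to bounded denominators), which is exactly why the conclusion in that case is stated as an equality of ideals in $\Lambda\otimes_{\mathbb{Z}_p}\mathbb{Q}_p$.

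I expect the main obstacle to be twofold. The harder analytic input is making the divisibility of \cite{WAN} speak about precisely the anticyclotomic Selmer group $X^{\mathcal{F}_v}$ (or $X^{\mathrm{str}}$) attached to $\mathbf{T}$ rather than to some auxiliary three-variable or cyclotomic-times-anticyclotomic family: this requires a specialization / descent argument along the anticyclotomic line, controlling the behaviour of the Greenberg Selmer group under restriction and checking that no extra local terms appear, which in turn uses the splitting hypothesis at $p$ and the non-vanishing at the relevant point of the local Euler factors. The more delicate algebraic step is the precise bookkeeping of the pseudo-null modules: Theorem~\ref{1.1} only gives a pseudo-isomorphism, so I must argue that the extra pseudo-null factors are $\iota$-invariant and cancel, or work throughout modulo height-one primes and then invoke that both sides are (up to the controlled ambiguity) determined by their localizations. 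A secondary technical point is verifying the precise form of the explicit reciprocity law for $\kappa_1^{Hg}$ in the two-variable anticyclotomic setting with the mild ramification allowed by assumptions (1) and (2) at primes $q\mid N$; here one must track the local Tamagawa-type factors at the non-split or ramified $q$ and confirm they are units (or become units after inverting $p$), which is where the dichotomy between the two assumptions again shows up.
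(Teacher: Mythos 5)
Your overall route is the paper's own: one squeezes Howard's divisibility $\mathrm{char}_\Lambda(M)\supseteq\mathrm{char}_\Lambda(H^1_\mathcal{F}(\mathcal{K},\mathbf{T})/\Lambda\kappa_1^{Hg})$ against a reverse inequality obtained from the divisibility of \cite{WAN} restricted to the anticyclotomic line via a control theorem, Castella's BDP-type formula for $\kappa_1^{Hg}$, and a Poitou--Tate computation. But the chain of divisibilities you actually write down does not close, because the central relations are mis-stated. First, \cite{WAN} proves $\mathrm{char}_{\Lambda_\mathcal{K}}(X_{f,\mathcal{K},v})\subseteq(\mathcal{L}_{f,\mathcal{K}})$, i.e.\ the $p$-adic $L$-function divides the characteristic ideal (the Selmer group is at least as large as predicted); you quote the divisibility in the opposite direction. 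Second, you assert both $\mathrm{char}_\Lambda(H^1_\mathcal{F}(\mathcal{K},\mathbf{T})/\Lambda\kappa_1^{Hg})\mid(\mathcal{L})$ and $(\mathcal{L})\mid\mathrm{char}_\Lambda(H^1_\mathcal{F}(\mathcal{K},\mathbf{T})/\Lambda\kappa_1^{Hg})$ as consequences of the reciprocity law; what Castella's formula gives (Corollary \ref{Castella}) is the equality $\mathrm{ord}_P(\mathcal{L}^{BDP}_{f,\mathcal{K}})=2f_{v,P}+2\,\mathrm{ord}_P(H^1_\mathcal{F}(\mathcal{K},\mathbf{T})/\Lambda\kappa_1^{Hg})$, where the square and the local cokernel term $f_{v,P}$ are both essential. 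Third, your proposed Poitou--Tate identification of $\mathrm{char}(X^{\mathcal{F}_v})$ with $\mathrm{char}(M)$ times the Heegner quotient is, up to local terms, a restatement of the theorem itself and so cannot serve as an input; what duality actually yields --- after proving $H^1_{str}(\mathcal{K},\mathbf{T})=0$ from the nonvanishing of the localized Heegner family, computing that $H^1(\mathcal{K},\mathbf{T})$ has rank two, and comparing global-to-local cokernels for $\mathbf{T}$ at $P$ with those for $\mathbf{A}$ at $P^\iota$, together with $f_{\bar v,P^\iota}=f_{v,P}$ --- is $\mathrm{ord}_P(H^1_v(\mathcal{K},\mathbf{A})^*)=2\,\mathrm{ord}_P(M)+2f_{v,P}$ (equation (\ref{(1)})), again with the square. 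With these corrected statements the $2f_{v,P}$ terms cancel, and \cite{WAN} plus the control theorem (giving $(\mathcal{L}^{BDP}_{f,\mathcal{K}})\supseteq\mathrm{char}_\Lambda(X^{anti}_{f,\mathcal{K},v})$) force $\mathrm{ord}_P(M)\geq\mathrm{ord}_P(H^1_\mathcal{F}(\mathcal{K},\mathbf{T})/\Lambda\kappa_1^{Hg})$ for every height one prime $P$, which against Howard's divisibility gives the theorem; with your mixed, unsquared relations and the reversed divisibility, the inequality in the new direction never appears.

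A smaller point: the weakening to $\Lambda\otimes_{\mathbb{Z}_p}\mathbb{Q}_p$ under assumption (1) is not primarily caused by denominators of the $p$-adic $L$-function of \cite{WAN}. It comes from the Tamagawa factors at non-split primes entering the control theorem along the anticyclotomic line, from the fact that Castella's formula is only valid up to $p$-powers when the Heegner points come from a general Shimura curve (Manin constant and modular degree issues), and from the use of Hsieh's vanishing of the $\mu$-invariant of $\mathcal{L}^{BDP}_{f,\mathcal{K}}$ \cite{Hsieh}, available under (2), to handle the prime $P=(p)$. Your closing remark about Tamagawa-type factors anticipates part of this, but the role of $P=(p)$ and of the Shimura-curve ambiguity should be made explicit.
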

The Theorem is proved by comparing different Selmer groups using Poitou-Tate exact sequences. The key ingredient is the two-variable main conjecture for $v$-Selmer groups recently proved by the author \cite{WAN}. We mention that we can not resort to the two-variable main conjecture in \cite{SU} since the global sign is assumed to the $+1$ there. In fact our theorem can be interpreted as a Gross-Zagier theoretic Iwasawa main conjecture: \cite{Howard1} proved a formula relating the right hand side with the quotient of the derivative of a two variable $p$-adic $L$-function and some $\Lambda$-adic regulator $\mathcal{R}$. Thus our theorem simply states that the torsion part of the Selmer group is given by the derivatives of the $p$-adic $L$-function over $\mathcal{R}$. We also mention that this result can be proven under a different assumption (basically assuming some local Galois representations being ramified) by using the idea of W.Zhang $\cite{Zhang}$ combined with results of Howard \cite{Howard2}.
\begin{remark}
In general there should be a Manin constant showing up in the above identity. However by our assumption of square-free conductor and assumption (2) such Manin constant is prime to $p$. There should also be a product of local Tamagawa numbers which is again a $p$-adic unit under assumption (2). In general the Kolyvagin system argument does not give the sharp upper bound. We will treat these gaps in a future joint work with D. Jetchev and C.Skinner.
\end{remark}
The idea in this paper is actually the first case of a much more general strategy for Iwasawa theory that we will carry out: one uses special cycles (as Heegner points in this paper) to build a link among different kinds of main conjectures and thus reduce some more difficult conjectures to a relatively accessible one. We have already seen such strategy to work in other cases: in \cite{WAN1} we again use Heegner points but in the supersingular case to prove that the torsion part of the $\pm$-Selmer group is the square of divisibility of the $\pm$-Heegner points. In \cite{WAN2} we use Beilinson-Flach elements as special cycles to prove Kobayashi's cyclotomic main conjecture for supersingular elliptic curves. Such results were previously out of reach by known techniques.

\subsection{Iwasawa Main Conjecture}
We also prove a theorem about full equality in the $2$-variable Iwasawa main conjecture
as a byproduct. First we recall here the main result of \cite{WAN}. As in \emph{loc.cit} there is a $p$-adic $L$-function $\mathcal{L}_{f,\mathcal{K}}\in \mathrm{Frac}\Lambda_\mathcal{K}$, such that for a Zariski dense set of arithmetic points $\phi\in\mathrm{Spec}\Lambda_\mathcal{K}$ such that $\phi\circ \boldsymbol{\varepsilon}$ is the $p$-adic avatar of a Hecke character $\xi_\phi$ of $\mathcal{K}^\times\backslash \mathbf{A}_\mathcal{K}^\times$ of infinite type $(\frac{\kappa}{2},-\frac{\kappa}{2})$ for some $\kappa\geq 6$, of conductor $(p^t,p^t)$ ($t>0$) at $p$, then:
$$\phi(\mathcal{L}_{{f},\mathcal{K}})=\frac{p^{(\kappa-3)t}\xi_{1,p}^2\chi_{1,p}^{-1}
\chi_{2,p}^{-1}(p^{-t})\mathfrak{g}(\xi_{1,p}\chi_{1,p}^{-1})
\mathfrak{g}(\xi_{1,p}\chi_{2,p}^{-1})L(\mathcal{K},f,\xi_\phi,\frac{\kappa}{2})(\kappa-1)!(\kappa-2)!}{(2\pi i)^{2\kappa-1}\Omega_\infty^{2\kappa}}.$$
Here $\mathfrak{g}$ is the Gauss sum and $\chi_{1,p}, \chi_{2,p}$ are characters such that $\pi(\chi_{1,p},\chi_{2,p})\simeq \pi_{f,p}$.
\noindent This $p$-adic $L$-function is obtained by putting back the Euler factors to a $\Sigma$-primitive $p$-adic $L$-function in $\Lambda$. So a priory we do not know if this $p$-adic $L$-function is in $\Lambda_\mathcal{K}$. However we will prove it is the case under the assumptions (2). (This should be true in general by the forthcoming work of Eischen-Harris-Li-Skinner which made careful choices at all ramified primes. However we do not need this).\\

\noindent On the arithmetic side the local Selmer condition in \cite{WAN} is the $v'$-one defined here.
We make some remarks about the slight difference between the Selmer condition $v$ and $v'$ above, We have the Hochschild-Serre spectral sequence:
$$0\rightarrow H^1(G_{\bar{v}}/I_{\bar{v}},\mathbf{A}^{I_{\bar{v}}})\rightarrow H^1(\mathcal{K}_{\bar{v}},\mathbf{A})\rightarrow H^1(I_{\bar{v}},\mathbf{A})^{G_{\bar{v}}}$$
we have
$0\rightarrow H_{\bar{v}}^1(\mathcal{K}_{\bar{v}},\mathbf{A})\rightarrow H_{\bar{v}'}^1(\mathcal{K}_{\bar{v}},\mathbf{A})\rightarrow H^1(G_{\bar{v}}/I_{\bar{v}},\mathbf{A}^{I_{\bar{v}}})$ where the last term is of finite cardinality. This implies that $\mathrm{ord}_P(H_{\bar{v}}^1(\mathcal{K},\mathbf{A})^*)=\mathrm{ord}_P(H_{\bar{v}}^1(\mathcal{K},\mathbf{A},\bar{v}')^*)$ for all height one primes $P$ of $\Lambda$.\\

\noindent The two variable main conjecture states that:
$$\mathrm{char}_{\Lambda_\mathcal{K}}(X_{f,\mathcal{K},v})=(\mathcal{L}_{f,\mathcal{K}})$$
Where $X_{f,\mathcal{K},v}:=H_v^1(\mathcal{K}, T\otimes \Lambda_\mathcal{K}(\Psi_\mathcal{K})\otimes_{\Lambda_\mathcal{K}}\Lambda_\mathcal{K}^*)^*$.
The following is the part (ii) of the main theorem of \cite{WAN}.
\begin{theorem}
Under assumption (1) we have one containment $$\mathrm{char}_{\Lambda_\mathcal{K}\otimes\mathbb{Q}_p}(X_{f,\mathcal{K},v})\subseteq(\mathcal{L}_{f,\mathcal{K}})$$
as fractional ideals of $\Lambda_\mathcal{K}$.
\end{theorem}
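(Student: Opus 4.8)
The plan is to obtain this divisibility from an Eisenstein congruence on a unitary group, following the method of Skinner--Urban but adapted to the case of global root number $-1$. Since $\mathcal{K}$ is imaginary quadratic with $p$ split, the Rankin--Selberg values $L(\mathcal{K},f,\xi_\phi,\tfrac{\kappa}{2})$ interpolated by $\mathcal{L}_{f,\mathcal{K}}$ are automorphic periods on the unitary group of a Hermitian $\mathcal{K}$-space of signature $(3,1)$: the base change $\pi_{f,\mathcal{K}}$ of $f$ is transferred to a cusp form $\varphi$ on the definite group $\mathrm{U}(2)$ (its existence and nonvanishing being a Tunnell--Saito type local condition, and this is one place where assumption~(1), in particular the inert prime $q\mid N$ and the sign $-1$, is used), and $\varphi$ together with a $p$-adically varying Hecke character of $\mathcal{K}^\times$ is placed on the Levi $\mathrm{Res}_{\mathcal{K}/\mathbb{Q}}\mathrm{GL}_1\times\mathrm{U}(2)$ of the Klingen parabolic $Q\subset\mathrm{U}(3,1)$. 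First I would build a $p$-adic family $\boldsymbol{E}$ of Klingen--Eisenstein series on $\mathrm{U}(3,1)$ over $\Lambda_\mathcal{K}$, concretely via the pullback of a Siegel--Eisenstein series on the quasi-split $\mathrm{U}(3,3)$ along $\mathrm{U}(3,1)\times\mathrm{U}(2)$, whose constant term along $Q$ is, up to an explicit product of local Euler and Gauss factors matching the displayed interpolation formula, the $\Sigma$-primitive $p$-adic $L$-function $\mathcal{L}^\Sigma_{f,\mathcal{K}}\in\Lambda_\mathcal{K}$ underlying $\mathcal{L}_{f,\mathcal{K}}$.

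The crux of the first half is then to show that $\boldsymbol{E}$ is congruent modulo $\mathcal{L}^\Sigma_{f,\mathcal{K}}$ to a cuspidal Hida family $\boldsymbol{F}$ on $\mathrm{U}(3,1)$. This requires, on the one hand, a choice of local sections at $p$ and at the ramified primes $q\mid N$ making the relevant local doubling and intertwining integrals explicit units times the expected $L$-factors --- once more this is where assumption~(1) provides the necessary local test vectors --- and, on the other hand, a Fourier--Jacobi coefficient computation showing that every degenerate Fourier--Jacobi coefficient of $\boldsymbol{E}$ lies in $(\mathcal{L}^\Sigma_{f,\mathcal{K}})$ while some nondegenerate one is a unit, so that $\boldsymbol{E}$ is orthogonal to the Eisenstein ideal modulo $\mathcal{L}^\Sigma_{f,\mathcal{K}}$ and the congruence is sharp. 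I expect this step --- the delicate local computations at ramified places together with the Fourier--Jacobi analysis --- to be the main obstacle; it is exactly the point flagged in the remark on the forthcoming work of Eischen--Harris--Li--Skinner, and imperfect control here is what forces the final statement to hold only after inverting $p$ (loss of the $\mu$-invariant) and, once Euler factors are reinserted, only up to those factors.

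Granting the congruence, I would pass to Galois representations. By the hypothesis that the image of $T|_{G_\mathcal{K}}$ contains $\mathrm{GL}_2(\mathbb{Z}_p)$, the cuspidal family $\boldsymbol{F}$ carries an irreducible four-dimensional Galois representation $\rho_{\boldsymbol{F}}$ over a finite extension of $\mathrm{Frac}\,\Lambda_\mathcal{K}$ whose reduction modulo $\mathcal{L}^\Sigma_{f,\mathcal{K}}$ is reducible, with semisimplification dictated by the constant term of $\boldsymbol{E}$: a two-dimensional piece isomorphic, up to twist, to $T\otimes_{\mathbb{Z}_p}\Lambda_\mathcal{K}(\Psi_\mathcal{K})$ together with two characters. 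A lattice argument of Ribet--Wiles--Urban type --- stabilizing a $\Lambda_\mathcal{K}$-lattice in $\rho_{\boldsymbol{F}}$ and exploiting irreducibility of $\rho_{\boldsymbol{F}}$ and a suitable generic-position hypothesis --- then produces a nonsplit extension, that is, a nonzero class in $H^1(\mathcal{K},\,T\otimes_{\mathbb{Z}_p}\Lambda_\mathcal{K}(\Psi_\mathcal{K})\otimes_{\Lambda_\mathcal{K}}\Lambda_\mathcal{K}^*)$ annihilated by $\mathcal{L}^\Sigma_{f,\mathcal{K}}$.

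Finally I would check that these classes satisfy the Selmer local conditions cutting out (the $\Sigma$-primitive version of) $X_{f,\mathcal{K},v}$: at $p$ one uses the Panchishkin/ordinary filtration on the restriction of $\rho_{\boldsymbol{F}}$ to the decomposition group at $v$, which exists by $p$-ordinarity of $f$, to see that the class is unrestricted at $v$ and strict at $\bar v$; at $w\nmid p$ it follows from compatibility of the chosen section with the local condition $\mathcal{F}_w$; at $\infty$ there is nothing to check. Comparing lengths at every height-one prime $P$ of $\Lambda_\mathcal{K}$ then shows that the characteristic ideal of the $\Sigma$-primitive dual Selmer group is divisible by $\mathcal{L}^\Sigma_{f,\mathcal{K}}$ away from $(p)$, i.e.\ the $\Sigma$-primitive divisibility in $\Lambda_\mathcal{K}\otimes_{\mathbb{Z}_p}\mathbb{Q}_p$; reinserting the matching Euler factors on both sides yields $\mathrm{char}_{\Lambda_\mathcal{K}\otimes\mathbb{Q}_p}(X_{f,\mathcal{K},v})\subseteq(\mathcal{L}_{f,\mathcal{K}})$, as asserted.
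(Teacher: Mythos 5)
This theorem is not proved in the present paper at all: it is quoted as part (ii) of the main theorem of \cite{WAN}, so the only comparison possible is with that reference, and your outline (Klingen--Eisenstein family on $\mathrm{U}(3,1)$ obtained by pullback of a Siegel--Eisenstein family on $\mathrm{U}(3,3)$, constant term giving the $\Sigma$-primitive $p$-adic $L$-function, Fourier--Jacobi computations to establish the Eisenstein congruence, then the Ribet--Urban lattice argument producing Selmer classes satisfying the ordinary/relaxed conditions at $\bar v$ and $v$) is essentially the strategy carried out there, including the reason the result is only asserted in $\Lambda_\mathcal{K}\otimes\mathbb{Q}_p$ under assumption (1). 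Since within this paper the statement functions purely as an imported input, your sketch is consistent with the paper and no further comparison is needed.
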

\begin{remark}
In \emph{loc.cit} there is a CM character $\xi$ of $\mathcal{K}^\times\backslash \mathbb{A}^\times_\mathcal{K}$ showing up. In fact the assumption on $\xi$ in that part (ii) there exactly means some specialization of the $p$-adic $L$-functions gives the ``special value $L(\mathcal{K},f,1)$'' but this is not an interpolation point since here the CM form is a weight one Eisenstein series which has weight less than $f$.
\end{remark}
Our second main theorem is the following
\begin{theorem}
Then under assumption (2) we have $\mathcal{L}_{f,\mathcal{K}}\in\Lambda_\mathcal{K}$. Moreover: $X_{f,\mathcal{K}}$ is $\Lambda_\mathcal{K}$-torsion and
$$\mathrm{char}_{\Lambda_\mathcal{K}}(X_{f,\mathcal{K}})=(\mathcal{L}_{f,\mathcal{K}}).$$
\end{theorem}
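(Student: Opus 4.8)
The plan is to derive the two-variable equality by gluing together the anticyclotomic Heegner point results (Theorems~\ref{1.1} and~\ref{1.2}) and the one-divisibility of \cite{WAN} via Poitou--Tate duality and a descent along the anticyclotomic line, using assumption (2) to force every divisibility to close integrally. First the elementary reductions. Assumption (2) implies assumption (1): a ramified prime is non-split in $\mathcal{K}$, and the global root number is $-1$ by the stated local computation, so all results quoted under (1) are available, in particular $\mathrm{char}_{\Lambda_\mathcal{K}\otimes\mathbb{Q}_p}(X_{f,\mathcal{K},v})\subseteq(\mathcal{L}_{f,\mathcal{K}})$. Next, $\mathcal{L}_{f,\mathcal{K}}$ is obtained from a $\Sigma$-primitive $\mathcal{L}^\Sigma_{f,\mathcal{K}}\in\Lambda_\mathcal{K}$ by restoring the Euler factors at the primes of $\Sigma$ away from $p$; under (2) such a prime is either split in $\mathcal{K}$ or ramified with non-split multiplicative reduction, and in either case the restored local factor is a unit of $\Lambda_\mathcal{K}$ (for a ramified non-split multiplicative prime $\pi$ is locally an unramified quadratic twist of Steinberg, and the corresponding modifying factor along the $\mathbb{Z}_p^2$-tower is seen to have $p$-adic unit constant term), so $\mathcal{L}_{f,\mathcal{K}}\in\Lambda_\mathcal{K}$. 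Since $\mathcal{L}_{f,\mathcal{K}}\neq 0$, the containment above already shows $X_{f,\mathcal{K},v}$ is $\Lambda_\mathcal{K}$-torsion.

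For the characteristic ideal I would first establish the anticyclotomic main conjecture over $\Lambda=\Lambda^-$. Running the Poitou--Tate global duality exact sequences relating the $v$-, $v'$-, $\mathcal{F}$-, $\bar v$- and strict Selmer groups of $\mathbf{A}$ and $\mathbf{T}$ --- using the finiteness of the Hochschild--Serre terms, as in the discussion preceding the statement, to pass freely between the $v$- and $v'$-conditions, and complex conjugation to pass between $v$ and $\bar v$ at the cost of the involution $\iota$ --- one reduces the computation of $\mathrm{char}_\Lambda(H^1_v(\mathcal{K},\mathbf{A})^*)$ to the $\mathcal{F}$-side. There one inputs Howard's structure theorem $X\sim\Lambda\oplus M\oplus M$, the Euler system upper bound $\mathrm{char}(M)\supseteq\mathrm{char}(H^1_\mathcal{F}(\mathcal{K},\mathbf{T})/\Lambda\kappa_1^{Hg})$ of Theorem~\ref{1.1}, and its reverse inclusion from Theorem~\ref{1.2} under (2), giving $\mathrm{char}(M)=\mathrm{char}(H^1_\mathcal{F}(\mathcal{K},\mathbf{T})/\Lambda\kappa_1^{Hg})$; finally one invokes the explicit reciprocity law of \cite{Howard1}, which identifies the latter with a derivative of the two-variable $p$-adic $L$-function over the $\Lambda$-adic regulator $\mathcal{R}$ and computes the image of $\kappa_1^{Hg}$ under the Coleman map at $v$ in terms of $\mathcal{L}_{f,\mathcal{K}}$. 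The net effect, once Poitou--Tate has absorbed the derivative and the regulator, is the equality $\mathrm{char}_\Lambda(H^1_v(\mathcal{K},\mathbf{A})^*)=(\mathcal{L}^-_{f,\mathcal{K}})$ over $\Lambda$, where $\mathcal{L}^-_{f,\mathcal{K}}$ denotes the image of $\mathcal{L}_{f,\mathcal{K}}$ under $\Lambda_\mathcal{K}\twoheadrightarrow\Lambda$.

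To pass to two variables, write $\mathrm{char}_{\Lambda_\mathcal{K}}(X_{f,\mathcal{K},v})=(\mathcal{L}_{f,\mathcal{K}})\cdot(H)$ with $H\in\Lambda_\mathcal{K}$, which is legitimate once the divisibility of \cite{WAN} is known integrally. A control theorem, with error terms that are $p$-adic units under (2), identifies the anticyclotomic quotient of $X_{f,\mathcal{K},v}$ with $H^1_v(\mathcal{K},\mathbf{A})^*$ over $\Lambda$ up to units, using that $X_{f,\mathcal{K},v}$ has no nonzero pseudo-null submodule; comparing characteristic ideals and invoking the previous paragraph, the image $\overline H$ of $H$ in $\Lambda=\mathbb{Z}_p[[W]]$ is a unit. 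Since $\Lambda_\mathcal{K}\cong\mathbb{Z}_p[[S,W]]$ and reduction modulo the cyclotomic variable preserves the constant term, $\overline H$ a unit forces $H$ a unit of $\Lambda_\mathcal{K}$, whence $\mathrm{char}_{\Lambda_\mathcal{K}}(X_{f,\mathcal{K},v})=(\mathcal{L}_{f,\mathcal{K}})$. If one does not wish to assert integrality of \cite{WAN} directly, the same descent upgrades the easy equality up to $\Lambda_\mathcal{K}\otimes\mathbb{Q}_p$ (immediate from the displayed containment and the anticyclotomic equality) to an integral one using the vanishing of the $\mu$-invariant of $\mathcal{L}^-_{f,\mathcal{K}}$ --- a standard consequence of mod $p$ non-vanishing of anticyclotomic Rankin--Selberg $L$-values, or visible directly from the construction of $\mathcal{L}^\Sigma_{f,\mathcal{K}}$ --- which forces $\mu(\mathcal{L}_{f,\mathcal{K}})=0$ and rules out any negative power of $p$ in the ratio.

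The hard part will be the bookkeeping that makes the circle of divisibilities close integrally rather than only after inverting $p$. Every Poitou--Tate triangle, the $v$-versus-$v'$ comparison, the reciprocity law of \cite{Howard1}, and the control theorem in the descent step contribute finite local corrections --- products of local Tamagawa numbers, the Manin constant of $\Phi$, and local image-versus-regulator discrepancies --- and one must check that the square-free conductor together with the non-split multiplicative hypothesis at the ramified primes in (2) makes all of them $p$-adic units; this is also exactly what removes the $\mathbb{Q}_p$ from the input divisibility. The secondary technical hurdle is the control theorem itself: one must show the comparison map from the two-variable $v$-Selmer group to its anticyclotomic specialization has kernel and cokernel of unit size, for which one again relies on the local conditions in (2) and on the torsion-freeness and pseudo-nullity statements recorded in Theorem~\ref{1.1}.
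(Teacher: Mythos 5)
Your global architecture matches the paper's: establish the anticyclotomic equality coming from the Heegner point results (Theorem \ref{1.1}, Theorem \ref{1.2}, Corollary \ref{Castella} and equation (\ref{(1)})), quote the two-variable divisibility of \cite{WAN}, specialize at $\gamma^+=1$ through the control theorem, and promote the specialized equality to a two-variable one by the Skinner--Urban trick (\cite[Theorem 3.6.5]{SU}); your observation that a unit modulo $I=(\gamma^+-1)$ pulls back to a unit of $\Lambda_\mathcal{K}$ because the reduction is a local homomorphism is exactly how that step closes.

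The genuine gap is the first assertion, $\mathcal{L}_{f,\mathcal{K}}\in\Lambda_\mathcal{K}$. You dispose of it by claiming that under assumption (2) the Euler factors restored at primes of $\Sigma$ are units of $\Lambda_\mathcal{K}$; you only sketch this for the ramified non-split multiplicative primes and simply assert it for the split primes, and the assertion is false in general: at a split prime $q|N$ the restored factor specializes under the augmentation to a quantity of the shape $1-a_qq^{-1}$ (up to normalization), which can be divisible by $p$, and such factors have zeros on weight space; nothing in (2) rules this out, and if the claim were true the paper would not need a separate integrality argument at all. The paper's route is genuinely different: it works throughout with $\tilde{\mathcal{L}}_{f,\mathcal{K}}$ (the denominator-cleared, $p$-power-free element of $\Lambda_\mathcal{K}$), runs the descent to get $(\tilde{\mathcal{L}}_{f,\mathcal{K}})=\mathrm{char}_{\Lambda_\mathcal{K}}(X_{f,\mathcal{K}})$ and, as a byproduct, $(\mathcal{L}^{BDP}_{f,\mathcal{K}})=(\tilde{\mathcal{L}}^{anti}_{f,\mathcal{K}})$; it then compares $\mathcal{L}_{f,\mathcal{K}}$ with Hida's Rankin--Selberg $p$-adic $L$-function \cite{Hida}, the ratio of periods being a Katz $p$-adic $L$-function in $\Lambda\setminus\{0\}$, to conclude that the denominator $E$ of $\mathcal{L}_{f,\mathcal{K}}$ lies in $\Lambda$ (anticyclotomic variable only); only then does the equality $(\mathcal{L}^{BDP}_{f,\mathcal{K}})=(\tilde{\mathcal{L}}^{anti}_{f,\mathcal{K}})$, together with Hsieh's vanishing of the $\mu$-invariant of $\mathcal{L}^{BDP}_{f,\mathcal{K}}$, force $E$ to be a power of $p$ times a unit, giving integrality. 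Your proposal has no substitute for the ``denominator involves only the anticyclotomic variable'' step, and since you also need $\mathcal{L}_{f,\mathcal{K}}\in\Lambda_\mathcal{K}$ before you may write $\mathrm{char}_{\Lambda_\mathcal{K}}(X_{f,\mathcal{K},v})=(\mathcal{L}_{f,\mathcal{K}})\cdot(H)$, the gap propagates into your main descent. Two lesser points: the containment of \cite{WAN} does not by itself show $X_{f,\mathcal{K}}$ is torsion (the characteristic ideal of a non-torsion module is $0$, and $0\subseteq(\mathcal{L}_{f,\mathcal{K}})$ is vacuous); the paper deduces torsionness from the anticyclotomic computation (\ref{(1)}) plus the control theorem. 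Also, the anticyclotomic input actually used is Castella's formula $\mathcal{L}^{BDP}_{f,\mathcal{K}}=\log^2_{\omega_E}(\kappa_1^{Hg})$ (Proposition \ref{2.2}), not the derivative/regulator theorem of \cite{Howard1}, whose relation to $\mathcal{L}_{f,\mathcal{K}}$ is not established and which the paper cites only as an interpretation.
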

\subsection{Converse of Gross-Zagier-Kolyvagin Theorem}
Finally we prove the following result which is a stronger form of the result of C.Skinner in \cite{Skinner}, which has been used as an important ingredient to prove that a majority of elliptic curves satisfy the BSD conjecture.
\begin{theorem}
Let $E/\mathbb{Q}$ be an elliptic curve with square-free conductor $N$ and good ordinary reduction at $p$. Let $\mathcal{K}$ be an imaginary quadratic field such that $p$ splits and $E[p]|_{G_\mathcal{K}}$ is irreducible. Suppose the assumption (1) is true. If the Selmer group $H_{\mathcal{F}}^1(\mathcal{K}, E[p^\infty])$ has corank one. Then the Heegner point $\kappa_1$ is not torsion and thus the vanishing order of $L_\mathcal{K}(f,1)$ is exactly one.
\end{theorem}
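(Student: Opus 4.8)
The plan is to derive the statement from Theorem~\ref{1.2}, Howard's structure theory (Theorem~\ref{1.1}), and the usual control theorems for Heegner Kolyvagin systems. Write $\mathfrak{a}=(\gamma^--1)\Lambda=(W)$ for the augmentation ideal of $\Lambda=\mathbb{Z}_p[[W]]$; reduction modulo $\mathfrak{a}$ turns $\mathbf{T}$ into $T$ and identifies $\mathbf{A}[\mathfrak{a}]$ with $A=E[p^\infty]$. Since $E[p]|_{G_\mathcal{K}}$ is irreducible we have $H^0(\mathcal{K},T)=0$, so $H^1(\mathcal{K},T)$ is $p$-torsion free; hence ``$\kappa_1$ not torsion'' is the same as ``$\kappa_1\neq0$'', and it suffices to prove the latter. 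Granting $\kappa_1\neq0$, the associated Heegner point $y_\mathcal{K}\in E(\mathcal{K})$ has infinite order (again because $E(\mathcal{K})$ has no $p$-torsion), so the Gross--Zagier formula, in the generality allowed by assumption (1), gives $L_\mathcal{K}'(f,1)\neq0$; as the global root number of $E/\mathcal{K}$ is $-1$ under assumption (1) we have $L_\mathcal{K}(f,1)=0$, so $\mathrm{ord}_{s=1}L_\mathcal{K}(f,s)=1$. Thus the entire content is the implication: $H^1_{\mathcal{F}}(\mathcal{K},E[p^\infty])$ of corank one $\Rightarrow \kappa_1\neq0$.

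The first step rephrases the corank hypothesis on the module side. A Mazur-style control theorem of the type in \cite{Howard} gives a map $H^1_{\mathcal{F}}(\mathcal{K},A)\to H^1_{\mathcal{F}}(\mathcal{K},\mathbf{A})[\mathfrak{a}]$ with finite kernel and cokernel, so that
\[
\mathrm{corank}_{\mathbb{Z}_p}H^1_{\mathcal{F}}(\mathcal{K},E[p^\infty])=\mathrm{rank}_{\mathbb{Z}_p}\bigl(X/\mathfrak{a}X\bigr),\qquad X:=H^1_{\mathcal{F}}(\mathcal{K},\mathbf{A})^*.
\]
Feeding in the pseudo-isomorphism $X\sim\Lambda\oplus M\oplus M$ of Theorem~\ref{1.1}, and using that for a torsion $\Lambda$-module $N$ the $\mathbb{Z}_p$-rank of $N/\mathfrak{a}N$ vanishes precisely when $W\nmid\mathrm{char}_\Lambda(N)$, one obtains $\mathrm{rank}_{\mathbb{Z}_p}(X/\mathfrak{a}X)=1+2\,\mathrm{rank}_{\mathbb{Z}_p}(M/\mathfrak{a}M)$. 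Hence the hypothesis ``corank one'' is equivalent to $W\nmid\mathrm{char}_\Lambda(M)$.

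The second step transfers this non-divisibility to the Heegner class by means of Theorem~\ref{1.2}. Under assumption (1) that theorem gives $\mathrm{char}_\Lambda(M)=\mathrm{char}_\Lambda\bigl(H^1_{\mathcal{F}}(\mathcal{K},\mathbf{T})/\Lambda\kappa_1^{Hg}\bigr)$ as ideals of $\Lambda\otimes_{\mathbb{Z}_p}\mathbb{Q}_p$. The left side is nonzero because $M$ is $\Lambda$-torsion, so $C:=H^1_{\mathcal{F}}(\mathcal{K},\mathbf{T})/\Lambda\kappa_1^{Hg}$ is $\Lambda$-torsion; in particular $\kappa_1^{Hg}\neq0$, and as $H^1_{\mathcal{F}}(\mathcal{K},\mathbf{T})$ is torsion free of rank one (Theorem~\ref{1.1}) we get $\Lambda\kappa_1^{Hg}\cong\Lambda$. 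Since $W$ is coprime to $p$ in $\Lambda$, the equality in $\Lambda\otimes\mathbb{Q}_p$ together with $W\nmid\mathrm{char}_\Lambda(M)$ forces $W\nmid\mathrm{char}_\Lambda(C)$, so $C[W]$ is finite. Applying $-\otimes_\Lambda\Lambda/\mathfrak{a}$ to $0\to\Lambda\cong\Lambda\kappa_1^{Hg}\to H^1_{\mathcal{F}}(\mathcal{K},\mathbf{T})\to C\to0$ and using that $\mathrm{Tor}_1^\Lambda(C,\Lambda/\mathfrak{a})\cong C[W]$ is finite while $\Lambda\kappa_1^{Hg}/\mathfrak{a}\cong\mathbb{Z}_p$ is torsion free, the induced map $\mathbb{Z}_p\hookrightarrow H^1_{\mathcal{F}}(\mathcal{K},\mathbf{T})/\mathfrak{a}$ is injective, so the image of $\kappa_1^{Hg}$ in $H^1_{\mathcal{F}}(\mathcal{K},\mathbf{T})/\mathfrak{a}$ is non-torsion. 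Finally the specialization map $H^1_{\mathcal{F}}(\mathcal{K},\mathbf{T})/\mathfrak{a}\to H^1(\mathcal{K},T)$, which has finite kernel and sends $\kappa_1^{Hg}$ to $\kappa_1$ by definition, carries a non-torsion element to a non-torsion element; hence $\kappa_1\neq0$.

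The step I expect to be most delicate is the control-theorem input of the second paragraph together with the last sentence of the third: that $\mathrm{corank}_{\mathbb{Z}_p}H^1_{\mathcal{F}}(\mathcal{K},E[p^\infty])$ really equals $\mathrm{rank}_{\mathbb{Z}_p}(X/\mathfrak{a}X)$, and that reduction modulo $\mathfrak{a}$ is compatible with the Selmer conditions --- in particular with the Greenberg-type conditions at $v$ and $\bar{v}$ --- up to finite error. These are Mazur-type statements available in \cite{Howard}; the irreducibility of $E[p]|_{G_\mathcal{K}}$ and the good ordinary reduction at $p$ are exactly what keep the error terms finite and hence irrelevant for coranks. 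By contrast the only genuinely new ingredient is Theorem~\ref{1.2}, and it is precisely this replacement of Howard's divisibility by an equality that converts the hypothesis ``corank one'' into the non-vanishing of $\kappa_1$, removing the additional local hypotheses needed in \cite{Skinner}.
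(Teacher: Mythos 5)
Your argument is correct and is essentially the paper's own proof: both use Greenberg's control theorem together with the structure result of Theorem \ref{1.1} to turn the corank-one hypothesis into the finiteness of $M/(\gamma^--1)M$, transfer this via Theorem \ref{1.2} (noting the equality there is only up to $p$-power factors, which do not affect the $(\gamma^--1)$-part) to the torsion-ness of $H^1_{\mathcal{F}}(\mathcal{K},\mathbf{T})/(\,(\gamma^--1)H^1_{\mathcal{F}}(\mathcal{K},\mathbf{T})+\Lambda\kappa_1^{Hg})$, and then conclude by the injectivity (finite kernel) of the specialization map to $H^1_{\mathcal{F}}(\mathcal{K},T\otimes_{\mathbb{Z}_p}\mathbb{Q}_p)$ that $\kappa_1$ is non-torsion, with Gross--Zagier giving the statement on $L_{\mathcal{K}}(f,1)$. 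Your write-up merely makes explicit the Tor- and finite-error bookkeeping that the paper leaves implicit.
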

\begin{remark}
Here by arguing Iwasawa theoretically we can remove the assumption put in \cite{Skinner} that certain localization map at $p$ is injective. In fact such injectivity has been replaced by a $\Lambda$-adic one, which is automatically true by the non-triviality of the family of Heegner points.
\end{remark}
One may also deduce some results over $\mathbb{Q}$ by choosing the $\mathcal{K}$ properly as in \cite{Skinner}. We omit the details.
\section{Known Results}
\subsection{Ben Howard's Result}
In \cite{Howard} Howard constructed a Kolyvagin system $\kappa_1^{Hg}\in H^1(\mathcal{K},\mathbf{T})$, and proved the following theorem under a different assumption.
\begin{theorem}\label{How}
$H_\mathcal{F}^1(\mathcal{K},\mathbf{T})$ is a torsion free rank one $\Lambda$-module. There is a torsion $\Lambda$-module $M$ such that $\mathrm{char}(M)=\mathrm{char}(M)^\iota$ and a pseudo-isomorphism:
$$X\sim\Lambda\oplus M\oplus M.$$
Also, $\mathrm{char}(M)|\mathrm{char}(H_\mathcal{F}^1(\mathcal{K},\mathbf{T})/\Lambda\kappa_1^{Hg})$.
\end{theorem}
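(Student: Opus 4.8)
The plan is to run the Iwasawa-theoretic Kolyvagin system machinery of Mazur--Rubin, in the form Howard uses in \cite{Howard}, for the $\Lambda$-adic representation $\mathbf{T}=T\otimes_{\mathbb{Z}_p}\Lambda(\Psi)$ equipped with the Selmer structure $\mathcal{F}$, and to observe that Howard's hypothesis that all $q\mid N$ split in $\mathcal{K}$ enters his argument only through the value of one root number. First I would check the standing hypotheses of that machinery. Irreducibility and large image are given: $T|_{G_\mathcal{K}}$ has image containing $\mathrm{GL}_2(\mathbb{Z}_p)$, so $\bar T=E[p]|_{G_\mathcal{K}}$ is absolutely irreducible, the relevant Galois cohomology of $\bar T$ over the field it cuts out vanishes, and there is $\tau\in G_\mathcal{K}$ whose action on $\bar T$ has a one-dimensional fixed space; a Chebotarev argument then produces the needed supply of Kolyvagin primes (rational $\ell$ inert in $\mathcal{K}$ with $\ell+1$ and $a_\ell$ suitably congruent modulo $p^k$, i.e. the degree-two primes of $\mathcal{K}$ indexing $\kappa^{Hg}$). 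The local conditions $\mathcal{F}_w$ are the unramified condition for $w\nmid p$ and the ordinary ($T^-$) condition for $w\mid p$; both propagate along the tower and are cartesian in the required sense. Finally $\mathbf{T}$ is self-dual up to the involution $\iota$: the Weil pairing identifies $T$ with $\mathrm{Hom}(T,\mathbb{Z}_p(1))$, and since $\iota\circ\Psi=\Psi^{-1}$ one gets $\mathbf{T}\cong\mathrm{Hom}_\Lambda(\mathbf{T},\Lambda(1))^{\iota}$, under which $\mathcal{F}$ is its own orthogonal complement up to $\iota$; this is the source of the symmetry $\mathrm{char}(M)=\mathrm{char}(M)^\iota$ below.

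Second I would compute the core rank of $(\mathbf{T},\mathcal{F})$. Feeding the ordinary local conditions at $v,\bar v$ and the (trivial) archimedean condition into the Greenberg--Wiles global Euler characteristic formula, one finds core rank exactly $1$ --- the Iwasawa-theoretic reflection of the global root number of $E/\mathcal{K}$ being $-1$. This is the only place where the arithmetic of the primes $q\mid N$ intervenes: by the standard local root-number computation, either a prime $q\mid N$ inert in $\mathcal{K}$ (assumption (1)) or a ramified prime with non-split multiplicative reduction (assumption (2)) forces the global sign over $\mathcal{K}$ to be $-1$, which is the same as core rank $1$ rather than $0$. Howard's split hypothesis is just one way to guarantee this; at the non-split $q\mid N$ the local condition is still the unramified one and contributes nothing further to the argument.

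Granting core rank one, I would then invoke the structure theorem for Kolyvagin systems. The module of Kolyvagin systems for $(\mathbf{T},\mathcal{F})$ is free of rank one over $\Lambda$; choosing a nonzero element and running the Kolyvagin bound shows that $H_\mathcal{F}^1(\mathcal{K},\mathbf{T})$ is torsion-free of rank one over $\Lambda$ and that $X$ admits a pseudo-isomorphism $X\sim\Lambda\oplus(\text{torsion})$. The $\iota$-self-duality from the first step equips the torsion part of $X$ with a $\Lambda$-adic Cassels--Tate pairing that is $\iota$-skew-Hermitian; over the regular two-dimensional ring $\Lambda$ this forces $X_{\mathrm{tors}}\sim M\oplus M$ with $\mathrm{char}(M)=\mathrm{char}(M)^\iota$. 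Comparing $\kappa^{Hg}$ with a generator of the module of Kolyvagin systems and applying the bound to $\kappa^{Hg}$ itself gives $\mathrm{char}(M)\mid\mathrm{char}(H_\mathcal{F}^1(\mathcal{K},\mathbf{T})/\Lambda\kappa_1^{Hg})$; this last divisibility is vacuous when $\kappa_1^{Hg}$ is $\Lambda$-torsion, since then the quotient has positive rank and zero characteristic ideal (the non-triviality of the Heegner family, due to Cornut--Vatsal, is what upgrades this to a genuine statement, but it is not needed for the assertions displayed in the theorem).

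The main obstacle is not the Kolyvagin-system formalism, which is insensitive to the splitting behaviour of $q\mid N$, but the construction of the Heegner Kolyvagin system $\{\kappa_n^{Hg}\}$ under assumptions (1) and (2), where the classical Heegner hypothesis fails. One must replace the modular parametrization $\Phi\colon X_0(N)\to E$ by a parametrization of $E$ by a Shimura curve attached, via Jacquet--Langlands, to the quaternion algebra over $\mathbb{Q}$ ramified at $\infty$ and at the non-split (resp. ramified non-split multiplicative) primes dividing $N$, take CM points by $\mathcal{K}$ on that curve, and verify that the resulting classes in $H^1(\mathcal{K},\mathbf{T})$ still satisfy the Kolyvagin-system relations with the correct local behaviour at $p$ and at the Kolyvagin primes. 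Carrying this out --- which is exactly the content of the next section --- is what lets the three steps above be quoted verbatim from \cite{Howard}.
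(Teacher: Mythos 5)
Your outline of the Mazur--Rubin/Howard formalism is fine as a summary of how \cite{Howard} proves his Theorem 2.2.10, but it misses the actual content of the statement in this generality, and one of your assertions is wrong. You claim that the non-triviality of the Heegner family (Cornut, generalized by Cornut--Vatsal) ``is not needed for the assertions displayed in the theorem.'' It is. If $\kappa_1^{Hg}$ were torsion (hence zero, since $H^1(\mathcal{K},\mathbf{T})$ is torsion-free), the Kolyvagin-system bound becomes vacuous: the machinery then gives neither that $H_\mathcal{F}^1(\mathcal{K},\mathbf{T})$ has rank exactly one nor that $X\sim\Lambda\oplus M\oplus M$ with $M$ \emph{torsion}; a priori the corank of $X$ could be larger, and it is precisely the non-vanishing of the Heegner classes over the anticyclotomic tower that pins down the rank-one/torsion structure. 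This is why the paper's proof explicitly replaces Cornut's theorem (proved under the hypothesis that all $q\mid N$ split) by Theorem 1.5 of \cite{CV}; the same non-triviality is used again later in the paper (e.g.\ to show $H^1_{str}(\mathcal{K},\mathbf{T})=0$). Relatedly, your claim that the split hypothesis enters Howard's argument only through a root-number/core-rank computation is off: the core rank is computed from the local conditions at $p$ and at infinity and does not see the splitting behaviour of $q\mid N$ at all; what the splitting hypothesis governs is the existence, non-triviality, and local behaviour of the Heegner classes themselves.

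That last point is where your proposal has a genuine gap rather than just a misstatement. You correctly identify that one must construct $\{\kappa_n^{Hg}\}$ from CM points on a quaternionic Shimura curve and verify the Kolyvagin-system axioms, but you then defer this to ``the next section,'' which does not exist in your write-up, so the new content of the theorem is never supplied. The paper handles it by citing \cite{CV} for the construction and then doing the one local verification that genuinely changes from Howard's setting, namely the analogue of case (ii) of Howard's Lemma 2.3.4 at the non-split primes $v\mid N$: any prime of $\mathcal{K}[n]$ above such $v$ splits completely in $\mathcal{K}_k[n]$, so the inertia group at $v$ is unchanged up the anticyclotomic tower and the classes land in the unramified local condition there. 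Your list of what must be checked (``local behaviour at $p$ and at the Kolyvagin primes'') omits exactly this check at the non-split $q\mid N$, which is the point of relaxing Howard's hypothesis. Until that verification and the Cornut--Vatsal input are actually incorporated, the three steps you propose to ``quote verbatim'' from \cite{Howard} do not yet apply.
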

\begin{proof}
This is essentially \cite[theorem 2.2.10]{Howard} except that we did not assume that all primes dividing $N$ are split in $\mathcal{K}$. However our local condition is enough to construct the Heegner points. We refer to \cite{CV} for the construction of the Heegner points $\kappa_1^{Hg}$ in the more general case. In \cite{Howard} Howard used a result of Cornut \cite{Cornut} that the image of certain Heegner point under the trace map of the Galois group $G_0:=\mathrm{Gal}(K[p^\infty]/H_\infty)$ on the elliptic curve $E$ is non-torsion (here $K[p^\infty]$ is the ring class field of $K$ with conductor $p^\infty$ and $H_\infty$ is the unique anti-cyclotomic $\mathbb{Z}_p$-extension of $\mathcal{K}$). Cornut assumed that all prime factors of $N$ splits in $K$. The general case is treated in \cite{CV} (see Theorem 1.5 of \emph{loc.cit} and the discussion right after it).\\

\noindent Now Howard's proof works throughout. The only difference is in lemma 2.3.4 of \emph{loc.cit} case (ii) we need to take care of non-split primes $v|N$. But then any prime of $\mathcal{K}[n]$ above $v$ splits completely in $\mathcal{K}_k[n]$, which is the composition of $\mathcal{K}_k$ and $\mathcal{K}[n]$ where $\mathcal{K}[n]$ is the ring class field of conductor $n$ for $n$ a square-free product of inert primes. So the fact that $\kappa_1^{Hg}$ is in the unramified class follows from that the inertial group of $v$ is the same as that for any prime of $\mathcal{K}_\infty[n]$ above $v$.
\end{proof}

\subsection{Castella's Formula}
Now we recall the result of \cite{Castella} which generalizes a formula of \cite{BDP}. There is a big logarithm map $\log_{\omega_E}: H^1_\mathcal{F}(K_v,\mathbf{T})\hookrightarrow \Lambda$ with finite cokernel (\cite{Castella}, note that by the construction there the image is in $\Lambda$ and contains the maximal ideal of $\Lambda$). Moreover there is a quasi-isomorphism $H^1(\mathcal{K}_v,\mathbf{T})/H_\mathcal{F}^1(\mathcal{K}_v,\mathbf{T})\rightarrow \Lambda$ with finite kernel and cokernel. We write $\mathcal{L}^{BDP}_{f,\mathcal{K}}$ for the $p$-adic $L$-function of Bertolini-Darmon-Prasanna \cite{BDP}, which is the specialization of $\mathcal{L}_{f,\mathcal{K}}$ to $\gamma^+\rightarrow 1$ by comparing their interpolation formulas.
\begin{definition}
Let $P$ be a height one prime of $\Lambda$. We consider the map:
$H_\mathcal{F}^1(\mathcal{K},\mathbf{T})_P\rightarrow H^1_{\mathcal{F}_v}(\mathcal{K}_v,\mathbf{T})_P$ and $H_\mathcal{F}^1(\mathcal{K},\mathbf{T})_P\rightarrow H^1_{\mathcal{F}_{\bar{v}}}(\mathcal{K}_{\bar{v}},\mathbf{T})_P$. These are maps of free $\Lambda_P$-modules of rank $1$. Moreover we are going to know that these maps are non-zero. We define the numbers $f_{v,P}$ and $f_{\bar{v},P}$ to be the orders of $P$ of the cokernels of the corresponding maps above.
\end{definition}
The following proposition is proved in \cite{Castella}.
\begin{proposition}\label{2.2}
Under assumption (2) we have
$$\mathcal{L}_{f,\mathcal{K}}^{BDP}=\log_{\omega_E}^2(\kappa_1^{Hg}).$$
Under assumption (1) the above is true up to multiplying by some powers of $p$.
\end{proposition}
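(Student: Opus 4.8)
The plan is to prove the identity by a density argument. Both sides lie in $\Lambda$ (resp.\ in $\Lambda\otimes_{\mathbb{Z}_p}\mathbb{Q}_p$ under assumption (1)), so it suffices to match their images under a Zariski-dense set of arithmetic specializations $\phi$ of $\Lambda$, and for this I would appeal to the $p$-adic Gross--Zagier formula of Bertolini--Darmon--Prasanna \cite{BDP} together with its higher-weight refinement in \cite{Castella}. To set up the comparison, recall that $\mathbf{T}|_{G_v}$ has a rank-one subrepresentation on which $G_v$ acts by an unramified twist of the cyclotomic character, and that Castella's big logarithm $\log_{\omega_E}\colon H^1_{\mathcal{F}}(\mathcal{K}_v,\mathbf{T})\hookrightarrow\Lambda$ is the $\Lambda$-adic avatar of the Perrin-Riou/Coleman map for this local datum: at an arithmetic point $\phi$ of infinity type $(\tfrac{\kappa}{2},-\tfrac{\kappa}{2})$ it specializes to the Bloch--Kato logarithm paired against the differential $\omega_E$ and the relevant de Rham class attached to $\xi_\phi$.

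Next I would identify the specialization of the global Heegner class. The class $\kappa_1^{Hg}\in H^1(\mathcal{K},\mathbf{T})$ is assembled (following \cite{Howard}, \cite{CV}) from the Galois-cohomological avatar of the Heegner points $P_{\mathcal{K}[p^t]}\in E(\mathcal{K}[p^t])$ along the anticyclotomic tower, made norm-compatible after correcting by the usual $U_p$-Euler factors. One then checks, via the interpolation property of the generalized Kummer/Abel--Jacobi map, that $\mathrm{loc}_v(\kappa_1^{Hg})$ specializes at $\phi$ to the $p$-adic Abel--Jacobi image of a generalized Heegner cycle on the Kuga--Sato variety attached to weight-$\kappa$ forms twisted by $\xi_\phi$, up to an explicit product of Gauss sums and periods. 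This step is exactly the content of the explicit reciprocity law in \cite{BDP} and \cite{Castella}; combining it with the previous paragraph expresses $\phi(\log_{\omega_E}(\kappa_1^{Hg}))$ as an explicit elementary constant times the $p$-adic logarithm of that Heegner cycle.

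Finally I would put the two interpolation formulas side by side. On one hand, since $\mathcal{L}^{BDP}_{f,\mathcal{K}}$ is the $\gamma^+\mapsto 1$ restriction of $\mathcal{L}_{f,\mathcal{K}}$ from \cite{WAN}, the value $\phi(\mathcal{L}^{BDP}_{f,\mathcal{K}})$ is an explicit elementary factor (powers of $p$, Gauss sums, a ratio of CM periods) times the central value $L(\mathcal{K},f,\xi_\phi,\tfrac{\kappa}{2})$. On the other hand, $\phi(\log_{\omega_E}^2(\kappa_1^{Hg}))$ is the square of the $p$-adic logarithm of the generalized Heegner cycle, which by the $p$-adic Gross--Zagier formula of \cite{BDP}, \cite{Castella} equals exactly the same elementary factor times $L(\mathcal{K},f,\xi_\phi,\tfrac{\kappa}{2})$. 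Hence $\phi(\mathcal{L}^{BDP}_{f,\mathcal{K}})=\phi(\log_{\omega_E}^2(\kappa_1^{Hg}))$ for every $\phi$ in the dense set, which forces the $\Lambda$-adic equality. To run this one needs $\mathrm{loc}_v(\kappa_1^{Hg})\neq 0$ so that both sides are nonzero for infinitely many $\phi$; this follows from the Cornut--Vatsal non-triviality of the family of Heegner points invoked in Theorem~\ref{How}.

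The main obstacle, and the source of the dichotomy between assumptions (1) and (2), is the comparison of the auxiliary local factors at the primes dividing $N$. It is cleanest to compare the $\Sigma$-primitive objects first and then restore the Euler factors on both sides. Under assumption (2) every bad prime is either split in $\mathcal{K}$ (harmless) or ramified with non-split multiplicative reduction, in which case the Manin constant and the local Tamagawa/epsilon factors entering the Heegner-point construction \cite{CV} and the interpolation of $\mathcal{L}_{f,\mathcal{K}}$ in \cite{WAN} are $p$-adic units and match on the nose, so the equality is exact. Under assumption (1) the presence of an inert bad prime introduces local factors in the construction of $\kappa_1^{Hg}$ and in $\mathcal{L}_{f,\mathcal{K}}$ that one can only pin down up to a power of $p$, which yields the weaker statement in that case.
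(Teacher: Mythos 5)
Your overall shape (compare both sides at a dense set of specializations, then account for bad local factors to explain the dichotomy between (1) and (2)) is reasonable, but the central step fails as stated. You propose to evaluate at arithmetic points $\phi$ with $\xi_\phi$ of infinity type $(\tfrac{\kappa}{2},-\tfrac{\kappa}{2})$, $\kappa\geq 6$; for the fixed weight-two form $f$ these lie \emph{inside} the range of interpolation of $\mathcal{L}^{BDP}_{f,\mathcal{K}}$, where its value is by construction an explicit factor times the central $L$-value and where \cite{BDP} proves no Gross--Zagier/Abel--Jacobi formula at all. The BDP formula concerns values \emph{outside} the interpolation range --- for weight two, finite-order anticyclotomic characters --- and expresses them via logarithms of Heegner points. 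At your chosen points the assertion $\phi(\log_{\omega_E}^2(\kappa_1^{Hg}))=(\text{same elementary factor})\cdot L(\mathcal{K},f,\xi_\phi,\tfrac{\kappa}{2})$ is precisely the explicit reciprocity law that Proposition \ref{2.2} encodes, so invoking \cite{BDP} or \cite{Castella} for it makes the argument circular. Relatedly, anticyclotomic twists of a fixed weight-two form do not give rise to generalized Heegner cycles on weight-$\kappa$ Kuga--Sato varieties: those cycles enter only when the \emph{weight} moves in a Hida family, which is Castella's actual route (identify higher-weight specializations of Howard's big Heegner point with generalized Heegner cycles and apply the BDP Abel--Jacobi formula there, the alternative being a comparison at finite-order characters of $p$-power conductor); moreover at your points the big logarithm specializes to a dual exponential rather than a Bloch--Kato logarithm, and the Cornut--Vatsal nonvanishing is not needed for a density argument of this type.

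You should also be aware that the paper does not reprove this identity: it quotes it from \cite{Castella}, and the only additional content is (i) that the construction of $\kappa_1^{Hg}$ via Heegner points on general indefinite Shimura curves (\cite{CV}) lets Castella's proof go through under the present hypotheses, the Hida-family assumptions of \emph{loc.cit.} being unnecessary for a single weight-two form with trivial character, and (ii) the analysis of the modular-degree/Manin factor: under assumption (2) the Heegner points come from the modular curve and this factor is prime to $p$ (square-free conductor), giving the exact equality, whereas under assumption (1) the comparison with the Shimura-curve parametrization is controlled only up to powers of $p$, whence the weaker statement. Your final paragraph captures part of (ii), but attributes the discrepancy under (1) to Euler factors at inert primes rather than to the parametrization/modular degree, which is the actual source identified in the paper.
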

In fact in \cite{Castella} the result is not stated in such generality since he only uses Heegner points on modular curves (see assumption (iii) of his Theorem A). However if we use Heegner points on general indefinite Shimura curves the proof goes in the completely same way. Some additional assumptions are put in \emph{loc.cit} due to the fact that they are working with Hida families. They are not necessary if we are only interested in a single weight two form with trivial character (\cite{Personal}). There should also be a modular degree factor showing up. Under assumption (2) this degree is co-prime to $p$ by the square-free conductor assumption and that the Heegner points come from the modular curve. But under assumption (1) the formula is only true up to a scalar.
\begin{corollary}\label{Castella}
For any height one prime $P$ of $\Lambda$, we have under assumption (2)
$$\mathrm{ord}_P(\mathcal{L}_{p,f,\mathcal{K}}^{BDP})=2f_{v,P}+2\mathrm{ord}_P(H_\mathcal{F}^1(\mathcal{K},
\mathbf{T})/\kappa_1^{Hg}).$$
Under assumption (1) the above is true for all $P\not=(p)$.
\end{corollary}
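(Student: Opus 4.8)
The plan is to combine Proposition \ref{2.2} with the two quasi-isomorphisms recorded just before the Definition, tracking orders at a fixed height one prime $P$. First I would observe that $\mathcal{L}_{f,\mathcal{K}}^{BDP}$ is, by the discussion preceding Proposition \ref{2.2}, the specialization $\gamma^+\to 1$ of $\mathcal{L}_{f,\mathcal{K}}$, and that Proposition \ref{2.2} expresses it (under assumption (2)) exactly as $\log_{\omega_E}^2(\kappa_1^{Hg})$. So it suffices to compute $\mathrm{ord}_P\bigl(\log_{\omega_E}^2(\kappa_1^{Hg})\bigr) = 2\,\mathrm{ord}_P\bigl(\log_{\omega_E}(\kappa_1^{Hg})\bigr)$ for each height one $P$.

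Next I would factor the map $\log_{\omega_E}$ through the local restriction. The class $\kappa_1^{Hg}$ lives in $H^1_\mathcal{F}(\mathcal{K},\mathbf{T})$, a free rank one $\Lambda$-module by Theorem \ref{How}; localizing at $P$ and restricting to $\mathcal{K}_v$ gives the map $H^1_\mathcal{F}(\mathcal{K},\mathbf{T})_P \to H^1_{\mathcal{F}_v}(\mathcal{K}_v,\mathbf{T})_P$ from the Definition, whose cokernel has $P$-order $f_{v,P}$ (one must first know this map is nonzero, which follows from the non-triviality of the Heegner family — the same input, via Cornut--Vatsal, that underlies Theorem \ref{How}; I would cite that here). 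Composing the restriction-to-$\mathcal{K}_v$ with $\log_{\omega_E}$ and using that $\log_{\omega_E}\colon H^1_\mathcal{F}(K_v,\mathbf{T})\hookrightarrow\Lambda$ has finite cokernel — hence is an isomorphism after inverting any height one $P$, contributing $0$ to $\mathrm{ord}_P$ — one gets
$$\mathrm{ord}_P\bigl(\log_{\omega_E}(\kappa_1^{Hg})\bigr) = f_{v,P} + \mathrm{ord}_P\bigl(H^1_\mathcal{F}(\mathcal{K},\mathbf{T})/\Lambda\kappa_1^{Hg}\bigr),$$
since a generator of the free rank one module $H^1_\mathcal{F}(\mathcal{K},\mathbf{T})_P$ maps under restriction to a generator of the index-$f_{v,P}$ submodule, and $\kappa_1^{Hg}$ sits inside $H^1_\mathcal{F}(\mathcal{K},\mathbf{T})$ at $P$-depth $\mathrm{ord}_P(H^1_\mathcal{F}(\mathcal{K},\mathbf{T})/\Lambda\kappa_1^{Hg})$. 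Doubling gives the claimed identity.

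For assumption (1), Proposition \ref{2.2} holds only up to a power of $p$, i.e. the identity $\mathcal{L}_{f,\mathcal{K}}^{BDP} = p^a \log_{\omega_E}^2(\kappa_1^{Hg})$ for some integer $a$. For any height one prime $P\neq(p)$ one has $\mathrm{ord}_P(p^a)=0$, so the computation above goes through verbatim at such $P$, which is exactly the asserted conclusion. The main obstacle — really the only subtle point — is justifying that the two localization maps in the Definition are nonzero, so that the numbers $f_{v,P}$ and $f_{\bar v,P}$ are finite and the order computation is legitimate; this is precisely the non-triviality of the Heegner point family, which I would invoke from Cornut--Vatsal as in the proof of Theorem \ref{How}, together with Castella's statement that $H^1(\mathcal{K}_v,\mathbf{T})/H^1_\mathcal{F}(\mathcal{K}_v,\mathbf{T})$ is quasi-isomorphic to $\Lambda$ (so that the target is itself free rank one at each $P$). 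Everything else is bookkeeping with orders at height one primes.
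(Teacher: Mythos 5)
Your argument is correct and is essentially the paper's own proof: the paper likewise localizes at $P$ the composite $H^1_\mathcal{F}(\mathcal{K},\mathbf{T})_P\rightarrow H^1_\mathcal{F}(\mathcal{K}_v,\mathbf{T})_P\rightarrow \Lambda_P$ (the last map being $\log_{\omega_E}$, an isomorphism after localization since its cokernel is finite) and combines it with Proposition \ref{2.2}, exactly as you do. Your extra care about the nonvanishing of the localization map (via Cornut--Vatsal, or Hsieh under assumption (2)) is the point the paper defers to the remark in its Definition and to the later lemma on $H^1_{str}(\mathcal{K},\mathbf{T})=0$, so nothing is missing.
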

\begin{proof}
We consider the map:
$$0\rightarrow H_\mathcal{F}^1(\mathcal{K},\mathbf{T})_P\rightarrow H_\mathcal{F}^1(\mathcal{K}_v,\mathbf{T})_P\rightarrow \Lambda_P$$
of $\Lambda_P$-modules. Here the last is the $\log_{\omega_E}$ map. The corollary is clear.
\end{proof}
\section{Proof of the Main Results}
\subsection{Tate Local Duality}
Let $K$ be a finite extension of $\mathbb{Q}_p$. Let $V^*:=\mathrm{Hom}_{\mathbb{Q}_p}(V,\mathbb{Q}_p(1))$. The pairing $V\times V^*\rightarrow \mathbb{Q}_p(1)$ gives a perfect pairing (see \cite[1.4]{Rubin}):
$$H^1(K,V)\otimes H^1(K,V^*)\rightarrow H^2(K,\mathbb{Q}_p(1))\simeq \mathbb{Q}_p.$$
We also have a perfect pairing:
$$H^1(K,T)\times H^1(K,{T}^*\otimes_{\mathbb{Z}_p}\mathbb{Q}_p/\mathbb{Z}_p)\rightarrow H^2(K,\mathbb{Q}_p/\mathbb{Z}_p(1))\simeq \mathbb{Q}_p/\mathbb{Z}_p.$$
We usually write $\mathcal{F}^*$ for the dual Selmer conditions by requiring $\mathcal{F}_w^*$ is the orthogonal complement of $\mathcal{F}_w$ under the local Tate pairing.
\subsection{Control of Selmer Groups}\label{control}
We first recall some notion of Greenberg \cite{Greenberg} about control theorems for Selmer groups. Let $F$ be an extension of $\mathcal{K}$ ($\mathcal{K}$ or $\mathcal{K}_\infty^-$ in application) and $F_\infty$ is an extension of $F$ such that the Galois group is isomorphic to $\Gamma=\mathbb{Z}_p^d$ for some $d$. We have the following diagram:

\[  \begin{CD}
0@>>> H_v^1(F, E[p^\infty])@>>>H^1(F, E[p^\infty])@>>>\mathcal{G}_E(F)@>>>0\\
@.     @VsVV                     @VhVV                    @VgVV           @.\\
0@>>> H_v^1(F_\infty, E[p^\infty])^\Gamma @>>>H^1(F_\infty, E[p^\infty])^\Gamma @>>>\mathcal{G}_E(F_\infty)^\Gamma @.
\end{CD}\]
Where
$$\mathcal{G}_E(M)=\mathrm{Im}(H^1(M, E[p^\infty])\rightarrow \mathcal{P}_E(M))$$
 and

$$\mathcal{P}_E(M)=\prod_{\eta\nmid p}H^1(M_\eta, E[p^\infty])/H_\mathcal{F}^1(M_\eta,E[p^\infty])\prod_{\eta|p}H^1_v(G_\eta, E[p^\infty]).$$
For any prime $w$ of $F$ let $r_w$ be the map
$$H^1(F_w,E[p^\infty])/H_v^1(F_w, E[p^\infty])\rightarrow H^1(F_{\infty,w},E[p^\infty])/H_v^1(F_{\infty,w}, E[p^\infty]).$$
By the snake lemma we have
$$0\rightarrow \mathrm{ker}s\rightarrow \mathrm{ker}h\rightarrow \mathrm{ker}g\rightarrow \mathrm{coker}s\rightarrow \mathrm{coker}h.$$
We relate the $2$-variable and $1$-variable Selmer groups of $f$ with local condition ``$v$'' by specializing the  cyclotomic variable. We let $I$ be the prime ideal of $\mathcal{O}_L[[\Gamma_\mathcal{K}]]$ generated by $(\gamma^+-1)$. Let $X_{f,\mathcal{K},v}^{anti}:=H_v^1(\mathcal{K}, T\otimes \Lambda(\Psi)\otimes_\Lambda\Lambda^*)^*$.
\begin{proposition}
Under assumption (1) there is an isomorphism
$X_{f,\mathcal{K},v}/I\otimes L\simeq X_{f,\mathcal{K},v}^{anti}\otimes L$ of $\mathcal{O}_L[[\Gamma_\mathcal{K}^-]]\otimes L$-modules. Under assumption (2) the above equality is true as $\Lambda$-modules.
\end{proposition}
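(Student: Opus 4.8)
The plan is to read the statement off as an instance of Greenberg's control theorem for the diagram of Section~\ref{control}, applied with $F=\mathcal{K}_\infty^-$, $F_\infty=\mathcal{K}_\infty$ and $\Gamma=\mathrm{Gal}(\mathcal{K}_\infty/\mathcal{K}_\infty^-)$, the cyclotomic $\mathbb{Z}_p$, which is canonically identified with $\Gamma^+$ so that its augmentation ideal in $\mathcal{O}_L[[\Gamma]]$ is $I=(\gamma^+-1)$. Since $\mathbf{A}_\mathcal{K}$ and $T\otimes\Lambda(-\Psi)\otimes_\Lambda\Lambda^*$ are the coinduced modules of $E[p^\infty]$ from $G_{\mathcal{K}_\infty}$, resp.\ $G_{\mathcal{K}_\infty^-}$, Shapiro's lemma (which carries the local conditions $v$ defining $X_{f,\mathcal{K},v}$ and $X_{f,\mathcal{K},v}^{anti}$ to the condition $v$ over $\mathcal{K}_\infty$, resp.\ $\mathcal{K}_\infty^-$) identifies $X_{f,\mathcal{K},v}^{anti}$ with the Pontryagin dual of $H_v^1(\mathcal{K}_\infty^-,E[p^\infty])$ and $X_{f,\mathcal{K},v}/I$ with the dual of $H_v^1(\mathcal{K}_\infty,E[p^\infty])^\Gamma$. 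So it suffices to prove that the restriction map $s\colon H_v^1(\mathcal{K}_\infty^-,E[p^\infty])\to H_v^1(\mathcal{K}_\infty,E[p^\infty])^\Gamma$ of the control diagram is an isomorphism (resp.\ becomes one after $\otimes_{\mathbb{Z}_p}\mathbb{Q}_p$).

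First I would feed the control diagram into the snake lemma, getting $0\to\ker s\to\ker h\to\ker g\to\mathrm{coker}\,s\to\mathrm{coker}\,h$. The terms $\ker h$ and $\mathrm{coker}\,h$ sit inside $H^1(\Gamma,E[p^\infty]^{G_{\mathcal{K}_\infty}})$ and $H^2(\Gamma,E[p^\infty]^{G_{\mathcal{K}_\infty}})$; the latter vanishes since $\Gamma\cong\mathbb{Z}_p$ has $p$-cohomological dimension one, and the former since $E[p^\infty]^{G_{\mathcal{K}_\infty}}=0$: $\mathcal{K}_\infty/\mathcal{K}$ being abelian and the image of $G_\mathcal{K}$ containing $\mathrm{GL}_2(\mathbb{Z}_p)$, the image of $G_{\mathcal{K}_\infty}$ still contains $\mathrm{SL}_2(\mathbb{Z}_p)$ (as $p\geq 5$), which has no fixed vectors on $E[p]$. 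Hence $\ker s=0$ and $\mathrm{coker}\,s\simeq\ker g$. Next I would bound $\ker g\subseteq\bigoplus_w\ker r_w$: at $w=v$ the local condition is the relaxed one, so the local quotient is $0$ and $\ker r_v=0$; for every finite $w\nmid p$ the extension $\mathcal{K}_{\infty,w}/\mathcal{K}_{\infty,w}^-$ is unramified (as $\mathcal{K}_\infty/\mathcal{K}$ is unramified outside $p$), so inertia is unchanged and, identifying the singular quotient $H^1(\mathcal{K}_{\infty,w}^-,E[p^\infty])/H^1_{\mathcal{F}}$ with $H^1(I_w,E[p^\infty])^{G_w/I_w}$ (valid since $\mathrm{cd}\,\widehat{\mathbb{Z}}=1$), $r_w$ becomes the inclusion of Frobenius invariants into the invariants for a smaller Frobenius subgroup, hence is injective, so $\ker r_w=0$. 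Therefore $\mathrm{coker}\,s$ is a subquotient of the single term $\ker r_{\bar v}$.

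The crux is thus the local term at $\bar v$. Since the $v$-condition there is the strict $(=0)$ one, $\ker r_{\bar v}=H^1(\Gamma_{\bar v},E[p^\infty]^{G_{\mathcal{K}_{\infty,\bar v}}})$ with $\Gamma_{\bar v}=\mathrm{Gal}(\mathcal{K}_{\infty,\bar v}/\mathcal{K}_{\infty,\bar v}^-)$. Here I would use the ordinary filtration $0\to E[p^\infty]^+\to E[p^\infty]\to E[p^\infty]^-\to 0$ over $\mathcal{K}_{\bar v}=\mathbb{Q}_p$: because $\mathcal{K}_{\infty,\bar v}$ is pro-$p$, hence linearly disjoint from $\mathbb{Q}_p(\mu_p)$, the character $\bar{\chi}_{\mathrm{cyc}}$ remains ramified on $G_{\mathcal{K}_{\infty,\bar v}}$, so $(E[p^\infty]^+)^{G_{\mathcal{K}_{\infty,\bar v}}}=0$ and $E[p^\infty]^{G_{\mathcal{K}_{\infty,\bar v}}}$ embeds into $(E[p^\infty]^-)^{G_{\mathcal{K}_{\infty,\bar v}}}\subseteq\mathbb{Q}_p/\mathbb{Z}_p$, on which $\Gamma_{\bar v}$ acts through the unramified character $\lambda^{-1}$; since the unit-root eigenvalue $\alpha$ satisfies $\alpha\neq 1$ (as $a_p\neq p+1$, because $\#E(\mathbb{F}_p)\geq 1$), multiplication by $\lambda^{-1}(\gamma^+)-1$ is surjective on this module, so $\ker r_{\bar v}$ is finite, and it is zero whenever $E[p^\infty]^{G_{\mathcal{K}_{\infty,\bar v}}}$ is divisible, in particular whenever $a_p\not\equiv 1\bmod p$. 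Consequently $\mathrm{coker}\,s$ is finite in all cases, so $s\otimes\mathbb{Q}_p$ is an isomorphism, which is the statement under assumption~(1); and it vanishes whenever the integral inputs (the $p$-integrality of Castella's formula in Proposition~\ref{2.2} and of the $v'/v$ comparison and normalizations from \cite{WAN}) are available, namely under assumption~(2), where one then gets the isomorphism of $\Lambda$-modules.

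The main obstacle I anticipate is precisely the last step: pinning down $\mathcal{K}_{\infty,\bar v}$ and its residue field accurately enough to compute $E[p^\infty]^{G_{\mathcal{K}_{\infty,\bar v}}}$ and to verify that its $\Gamma_{\bar v}$-cohomology is finite (and zero in the cases needed for the integral statement). Everything else — the passage through Shapiro's lemma, the vanishing of $\ker h$, $\mathrm{coker}\,h$, and of $\ker r_w$ away from $\bar v$, and the extraction of $\mathrm{coker}\,s$ from $\ker g$ — is the routine Poitou–Tate and snake-lemma bookkeeping already set up in Section~\ref{control}.
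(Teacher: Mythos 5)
Your skeleton is the same as the paper's: Greenberg's control diagram with $F=\mathcal{K}_\infty^-$, $F_\infty=\mathcal{K}_\infty$, the snake lemma, the vanishing of $\ker h$ and $\mathrm{coker}\,h$, and the reduction to the local kernels $\ker r_w$. The divergences occur exactly where the content of the proposition lies, and one of them is a genuine gap. At primes $w\nmid p$ you claim $\ker r_w=0$ outright. The paper instead finds, at each prime of $\mathcal{K}_\infty^-$ above a non-split $w$ (such $w$ split completely in $\mathcal{K}_\infty^-$, so there are infinitely many such primes), a kernel of size the Tamagawa factor $c_w^{(p)}$, following Greenberg; summed over the infinitely many conjugate primes this error is of the form (finite group)$\,\otimes\,\Lambda$, hence not pseudo-null, and it is precisely why the paper needs $\otimes L$ under assumption (1) and uses the non-split multiplicative reduction hypothesis of assumption (2) to make $c_w^{(p)}$ a $p$-adic unit. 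Your vanishing argument rests on taking the strictly unramified condition at both levels and on an unverified compatibility of that condition with the Shapiro-lemma identifications for the induced modules; if the Selmer groups actually being compared with \cite{WAN} and with Howard's $X$ carry the Greenberg/points-type condition at $w\nmid p$ (for which the singular quotient is all of $H^1$), your injectivity argument does not apply and the Tamagawa contributions reappear. Since you would be proving something strictly stronger than the paper claims under assumption (1), this step needs an explicit justification, not a one-line assertion.

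More decisively, at $\bar v$ your own computation only shows that $\ker r_{\bar v}=H^1(\Gamma_{\bar v},E[p^\infty](\mathcal{K}_{\infty,\bar v}))$ is finite, and zero when $a_p\not\equiv 1\bmod p$. Assumption (2) imposes nothing on $a_p\bmod p$, and the ``integral inputs'' you invoke to kill this group under (2) --- the $p$-integrality of Castella's formula in Proposition \ref{2.2} and the $v'/v$ comparison from \cite{WAN} --- are irrelevant to this local control term: they concern the interpolation formula and the local condition at $\bar v$ for the $\Lambda$-adic module, not the $\Gamma^+$-cohomology of the local torsion points in the two-variable tower. So as written you have established the statement after $\otimes L$ (assumption (1)), but not the integral isomorphism asserted under assumption (2); closing that gap requires either showing this local $H^1$ vanishes (e.g.\ excluding the anomalous case or analyzing the extension $0\to E[p^\infty]^+\to E[p^\infty]\to E[p^\infty]^-\to 0$ over $\mathcal{K}_{\infty,\bar v}$ more finely), or showing its image does not meet $\ker g$, or arguing that a finite error is harmless for the use made of the proposition. (For comparison, the paper disposes of the primes above $p$ by asserting that the local fields do not grow from $\mathcal{K}_\infty^-$ to $\mathcal{K}_\infty$ at split primes; your proposal replaces that with a computation that visibly leaves a possibly nonzero finite error, and then closes it with an appeal that does not apply.)
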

\begin{proof}
We consider the above discussion with $F=\mathcal{K}_\infty^-$ and $F_\infty=\mathcal{K}_\infty$. We need to study the kernel and cokernel of $s$. First note that $\mathrm{ker}h=0$ and $\mathrm{coker}h=0$ (see \cite[lemma 3.2]{Greenberg}). So $\mathrm{ker}s=0$. We need to study $\mathrm{ker}(g)$. For any prime $w$ of $\mathcal{K}$ split in $\mathcal{K}/\mathbb{Q}$ (allowed to divide $p$) and primes $w^-$ of $\mathcal{K}_\infty^-$ and $w_\infty$ of $\mathcal{K}_\infty$ above it, we have $\mathcal{K}_{\infty,w^-}^-=\mathcal{K}_{\infty,w_\infty}$, so the $\mathrm{ker}r_{w^-}=0$.

At non-split primes $w$ and $w^-$ and $w_\infty$ as above, we know $w$ is completely split in $\mathcal{K}_\infty^-$ and so $\mathcal{K}_{\infty,v^-}^-=\mathcal{K}_v$. Since $\mathbb{Q}_\infty\subset \mathcal{K}_\infty$, so by the \cite[page 74]{Greenberg}, $\mathrm{ker}(r_{v^-})\sim c_v^{(p)}$ where $c_v^{(p)}$ is the $p$-part of the Tamagawa number of $E$ at $v$. Under (2) by our assumption about non-split multiplicative reduction this is a $p$-adic unit. Under (1) such kernel is killed when tensoring with $\mathbb{Q}_p$. These altogether shows that the two-variable main conjecture implies the one variable anticyclotomic main conjecture.
\end{proof}
\begin{corollary}\label{3.8}
We have under assumption (2)
$$(\mathcal{L}_{f,\mathcal{K}}^{BDP})\supseteq \mathrm{char}_\Lambda(X_{f,\mathcal{K},v}^{anti}).$$
Under assumption (1) the above is true as ideals of $\Lambda\otimes_{\mathbb{Z}_p}\mathbb{Q}_p$.
\end{corollary}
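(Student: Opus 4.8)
The plan is to obtain the two asserted containments by specializing the two-variable divisibility of \cite{WAN} recalled above, specializing the cyclotomic variable $\gamma^+$ to $1$, and feeding in the control Proposition just established together with the standard behaviour of characteristic ideals under such a specialization. Throughout I write $\Lambda=\Lambda_\mathcal{K}/I$ with $I=(\gamma^+-1)$, and I use that by \cite{WAN} the module $X_{f,\mathcal{K},v}$ is $\Lambda_\mathcal{K}$-torsion.

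\emph{Step 1 (integral form of the two-variable divisibility).} First I would upgrade the containment quoted from \cite{WAN} to an integral one under assumption (2). That theorem is phrased for the $\Sigma$-primitive $p$-adic $L$-function and for the $v'$-Selmer condition, and in the form recalled only after inverting $p$. But by the Hochschild--Serre computation recorded just before the statement of the two-variable main conjecture, replacing the $v$-condition by the $v'$-condition does not change the characteristic ideal, as $\mathrm{ord}_P$ of the two dual Selmer groups agree at every height one prime $P$ of $\Lambda_\mathcal{K}$; and under assumption (2) the Euler factors relating the $\Sigma$-primitive and the full $p$-adic $L$-function, together with the relevant Tamagawa and Manin factors, are $p$-adic units (square-free conductor and non-split multiplicative reduction, exactly as in the Remark after Theorem \ref{1.2}), so removing them preserves integrality. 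This gives $\mathrm{char}_{\Lambda_\mathcal{K}}(X_{f,\mathcal{K},v})\subseteq(\mathcal{L}_{f,\mathcal{K}})$ in $\Lambda_\mathcal{K}$. Under assumption (1) these discrepancies may carry powers of $p$, so one only keeps the containment in $\Lambda_\mathcal{K}\otimes\mathbb{Q}_p$.

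\emph{Step 2 (specialize and descend).} Next I would reduce modulo $I$. On the analytic side $\mathcal{L}_{f,\mathcal{K}}\bmod I=\mathcal{L}_{f,\mathcal{K}}^{BDP}$ by the very definition of the latter (comparison of interpolation formulas). On the arithmetic side the control Proposition just proved identifies $X_{f,\mathcal{K},v}/I$ with $X_{f,\mathcal{K},v}^{anti}$ as $\Lambda$-modules under assumption (2), and up to $\otimes_{\mathbb{Z}_p}\mathbb{Q}_p$ under assumption (1). Writing $\mathrm{char}_{\Lambda_\mathcal{K}}(X_{f,\mathcal{K},v})=(\mathcal{L}_{f,\mathcal{K}})\cdot J$ for an ideal $J$ and reducing, one gets $(\mathcal{L}_{f,\mathcal{K}}^{BDP})\supseteq\mathrm{char}_{\Lambda_\mathcal{K}}(X_{f,\mathcal{K},v})\bmod I$. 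The standard behaviour of characteristic ideals under this specialization — which one checks by localizing at the height one primes of $\Lambda$ and applying the structure theorem over $\Lambda_\mathcal{K}$ — gives $\mathrm{char}_{\Lambda_\mathcal{K}}(X_{f,\mathcal{K},v})\bmod I\supseteq\mathrm{char}_\Lambda(X_{f,\mathcal{K},v}/I)=\mathrm{char}_\Lambda(X_{f,\mathcal{K},v}^{anti})$, valid as soon as $\mathrm{char}_{\Lambda_\mathcal{K}}(X_{f,\mathcal{K},v})\bmod I\neq0$. Chaining the two containments produces $(\mathcal{L}_{f,\mathcal{K}}^{BDP})\supseteq\mathrm{char}_\Lambda(X_{f,\mathcal{K},v}^{anti})$; under assumption (1) the same chain runs after $\otimes_{\mathbb{Z}_p}\mathbb{Q}_p$, giving the statement as ideals of $\Lambda\otimes_{\mathbb{Z}_p}\mathbb{Q}_p$.

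\emph{Non-vanishing and the main obstacle.} The hypothesis $\mathrm{char}_{\Lambda_\mathcal{K}}(X_{f,\mathcal{K},v})\bmod I\neq0$ (equivalently, $X_{f,\mathcal{K},v}^{anti}$ is $\Lambda$-torsion) I would obtain for free from Proposition \ref{2.2}: the Heegner family is non-trivial by the theorem of Cornut--Vatsal invoked in the proof of Theorem \ref{How}, so $\mathcal{L}_{f,\mathcal{K}}^{BDP}=\log_{\omega_E}^2(\kappa_1^{Hg})\neq0$, and then the ($\mathbb{Q}_p$-)divisibility of Step 1 forces $X_{f,\mathcal{K},v}^{anti}$ to be $\Lambda$-torsion. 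The genuinely delicate step is the integral upgrade of Step 1 — matching the $\Sigma$-primitive $v'$-formulation of \cite{WAN} with the clean statement used here — which is exactly where assumption (2) is needed and why under assumption (1) only the statement after inverting $p$ survives. One should also confirm that a possible pseudo-null submodule of $X_{f,\mathcal{K},v}$ does not obstruct the specialization step, but the inequality being used is the robust direction and holds whenever $\mathrm{char}_{\Lambda_\mathcal{K}}(X_{f,\mathcal{K},v})\bmod I\neq0$.
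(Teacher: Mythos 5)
Your overall skeleton---specialize the two-variable divisibility of \cite{WAN} at $\gamma^+=1$, feed in the control proposition, and use that characteristic ideals can only shrink under specialization (i.e.\ $\mathrm{char}_\Lambda(X_{f,\mathcal{K},v}/IX_{f,\mathcal{K},v})\subseteq \mathrm{char}_{\Lambda_\mathcal{K}}(X_{f,\mathcal{K},v})\bmod I$ once the right side is nonzero)---is exactly the paper's argument, and your Step 2 together with the non-vanishing paragraph does give the statement in $\Lambda\otimes_{\mathbb{Z}_p}\mathbb{Q}_p$, hence the assumption (1) case and the rational part of the assumption (2) case.

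The gap is Step 1, which is where you claim the integral statement under assumption (2). You assert an integral two-variable containment $\mathrm{char}_{\Lambda_\mathcal{K}}(X_{f,\mathcal{K},v})\subseteq(\mathcal{L}_{f,\mathcal{K}})$ in $\Lambda_\mathcal{K}$, on the grounds that $\mathcal{L}_{f,\mathcal{K}}$ is integral and that the Euler factors put back at $\Sigma$ (together with Tamagawa and Manin factors) are $p$-adic units. Neither assertion is available at this point: the result of \cite{WAN} is quoted only as a containment of fractional ideals (i.e.\ after inverting $p$); the removed Euler factors are non-constant elements of $\Lambda_\mathcal{K}$ whose $p$-content is not controlled by the square-free/non-split-multiplicative hypotheses; and the integrality $\mathcal{L}_{f,\mathcal{K}}\in\Lambda_\mathcal{K}$ under (2) is in this paper a \emph{conclusion} of the later main-conjecture theorem, whose proof uses the present corollary---so your route is either unproved or circular. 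The paper handles the $p$-power discrepancy differently and much more cheaply: having the containment after inverting $p$, the only height-one prime of $\Lambda$ left to check is $P=(p)$, and there one invokes Hsieh's theorem \cite{Hsieh} that $\mathcal{L}^{BDP}_{f,\mathcal{K}}$ has $\mu$-invariant $0$ under assumption (2), so $\mathrm{ord}_{(p)}(\mathcal{L}^{BDP}_{f,\mathcal{K}})=0$ and the integral containment at $(p)$ is automatic. Replacing your Step 1 by this appeal to \cite{Hsieh} (applied after the specialization, to the one-variable $\mathcal{L}^{BDP}_{f,\mathcal{K}}$ rather than to the two-variable $\mathcal{L}_{f,\mathcal{K}}$) repairs the proof.
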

\begin{proof}
We already have the result after inverting $p$ by combining the above proposition with the main theorem of \cite{WAN}. The powers of $p$ can be taken care of by note that by \cite{Hsieh} $L^{BDP}_{f,\mathcal{K}}$ has $\mu$-invariant 0 under assumption (2).
\end{proof}
\subsection{Galois Cohomology Computations}
We say a $\Lambda$-module $M$ is of rank $n$ if $M\otimes_\Lambda F_{\Lambda}$ is dimension $n$ over $F_\Lambda$ ($F_\Lambda$ is the fraction field of $\Lambda$). Let $\Sigma$ be a finite set of primes containing all primes such that $E$ or $\mathcal{K}$ is ramified. As in \cite{MR} we define a set of height one primes of $\Lambda$
$$\Sigma_\Lambda:=\{P:\sharp H^2(\mathcal{K}_\Sigma/\mathcal{K},\mathbf{T})[P]=\infty\}\cup \{P:\sharp H^2(\mathcal{K}_v,\mathbf{T})[P]=\infty\}\cup \{P:\sharp H^2(\mathcal{K}_{\bar{v}},\mathbf{T})[P]=\infty\}\cup\{(p)\}.$$
This is a finite set by \cite[lemma 5.3.4]{MR}.
\begin{proposition}(Poitou-Tate exact sequence)
Let $\mathcal{F}\subseteq \mathcal{G}$ be two Selmer conditions. We have the following long exact sequence:
$$0\rightarrow H_\mathcal{F}^1(\mathcal{K},\mathbf{T})\rightarrow H_\mathcal{G}^1(\mathcal{K},\mathbf{T})\rightarrow H_{\mathcal{G}}^1(\mathcal{K}_v,\mathbf{T})/H_\mathcal{F}^1(\mathcal{K}_v,\mathbf{T})\rightarrow H_{\mathcal{F}^*}^1(\mathcal{K},\mathbf{A})^*\rightarrow H_{\mathcal{G}^*}^1(\mathcal{K},\mathbf{A})^*\rightarrow 0.$$
\end{proposition}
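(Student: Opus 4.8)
The plan is to obtain this as the standard five-term exact sequence comparing two Selmer structures, deduced from the nine-term Poitou--Tate exact sequence for $\mathbf{T}$ and $\mathbf{A}$ together with global duality, in the style of \cite[\S 1.7, \S 5.3]{MR} (see also \cite{Rubin}). Observe first that in every application $\mathcal{F}$ and $\mathcal{G}$ agree at all places away from $v$ --- for instance $\mathrm{str}\subseteq\mathcal{F}_v$, $\mathcal{F}_v\subseteq v$, and $\mathcal{F}_{\bar v}\subseteq\mathcal{F}$ all differ only at $v$ --- so the general term $\bigoplus_w(\mathcal{G})_w/(\mathcal{F})_w$ collapses to the single module $(\mathcal{G})_v/(\mathcal{F})_v=H^1_{\mathcal{G}}(\mathcal{K}_v,\mathbf{T})/H^1_{\mathcal{F}}(\mathcal{K}_v,\mathbf{T})$ appearing in the statement, and I would argue in this case.

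Concretely, I would start from the defining sequences $0\to H^1_{\mathcal{F}}(\mathcal{K},\mathbf{T})\to H^1(\mathcal{K}_\Sigma/\mathcal{K},\mathbf{T})\to\bigoplus_{w\in\Sigma}H^1(\mathcal{K}_w,\mathbf{T})/(\mathcal{F})_w$ and likewise for $\mathcal{G}$ and for $\mathcal{F}^*,\mathcal{G}^*$ acting on $\mathbf{A}$, using that $(\mathcal{F}^*)_w$ is by definition the annihilator of $(\mathcal{F})_w$ under local Tate duality and that $\mathcal{F}\subseteq\mathcal{G}$ forces the reverse inclusion $\mathcal{G}^*\subseteq\mathcal{F}^*$. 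The inclusion $H^1_{\mathcal{G}^*}(\mathcal{K},\mathbf{A})\hookrightarrow H^1_{\mathcal{F}^*}(\mathcal{K},\mathbf{A})$ dualizes to the surjection at the right end, so the content is concentrated in the connecting map $H^1_{\mathcal{G}}(\mathcal{K}_v,\mathbf{T})/H^1_{\mathcal{F}}(\mathcal{K}_v,\mathbf{T})\to H^1_{\mathcal{F}^*}(\mathcal{K},\mathbf{A})^*$, which sends the class of $x$ to the functional $c\mapsto\langle x_v,c_v\rangle_v$ (well defined because $c\in H^1_{\mathcal{F}^*}(\mathcal{K},\mathbf{A})$ has $c_v\in(\mathcal{F}^*)_v$, which annihilates $(\mathcal{F})_v$). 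The two nontrivial exactness assertions --- kernel of this map $=$ image of $H^1_{\mathcal{G}}(\mathcal{K},\mathbf{T})$, and cokernel $=$ kernel of $H^1_{\mathcal{F}^*}(\mathcal{K},\mathbf{A})^*\to H^1_{\mathcal{G}^*}(\mathcal{K},\mathbf{A})^*$ --- are then the usual diagram chase, whose only real input is Poitou--Tate global duality: the image of $H^1(\mathcal{K}_\Sigma/\mathcal{K},\mathbf{T})$ in $\bigoplus_{w\in\Sigma}H^1(\mathcal{K}_w,\mathbf{T})$ is the exact orthogonal complement of the image of $H^1(\mathcal{K}_\Sigma/\mathcal{K},\mathbf{A})$ under the sum of local pairings. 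Feeding a tuple supported at $v$ into this statement, together with the orthogonality of $(\mathcal{F})_w$ and $(\mathcal{F}^*)_w$ at all other places, yields both claims.

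The main obstacle is that $\mathbf{T}$ is a $\Lambda$-adic rather than finite Galois module, so Poitou--Tate duality is not directly available. Following \cite[\S 5.3]{MR} I would first apply the finite-level nine-term sequence and global duality to the finite quotients $\mathbf{T}/\mathfrak{a}$ with $\mathfrak{a}\subseteq\Lambda$ of finite index (for which $\mathbf{A}$ is the Pontryagin dual of the corresponding direct limit), and then pass to the inverse limit over $\mathfrak{a}$. This requires that the relevant $\varprojlim^1$-terms vanish, i.e.\ the Mittag--Leffler condition, which follows from the finite generation over $\Lambda$ of $H^2(\mathcal{K}_\Sigma/\mathcal{K},\mathbf{T})$ and of the local groups $H^2(\mathcal{K}_w,\mathbf{T})$ --- the same finiteness underlying \cite[Lemma 5.3.4]{MR} and the definition of $\Sigma_\Lambda$ --- and one must also check that the local Tate pairings are compatible with these limits and with Pontryagin duality, which is routine.
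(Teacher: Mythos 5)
Your proposal is correct and is essentially the paper's own argument: the paper simply cites \cite[Theorem 1.7.3 and Corollary 1.7.5]{Rubin}, which is exactly the global-duality five-term comparison of Selmer structures that you unwind, and your passage to the $\Lambda$-adic module via limits over finite quotients (with the routine Mittag--Leffler and compatibility checks) is the standard way that citation is made to apply to $\mathbf{T}$ and $\mathbf{A}$.
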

\begin{proof}
It follows from \cite[Theorem 1.7.3]{Rubin} in a similar way as corollary 1.7.5 in \emph{loc.cit}.
\end{proof}
\begin{lemma}
$$H_{str}^1(\mathcal{K},\mathbf{T})=0.$$
\end{lemma}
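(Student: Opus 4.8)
The plan is to identify $H^1_{str}(\mathcal{K},\mathbf{T})$ with the kernel of localization at $v$ on the rank-one module $H^1_{\mathcal{F}}(\mathcal{K},\mathbf{T})$ and then to show that this localization map is injective. First, comparing local conditions: since $(str)_w=0\subseteq (\mathcal{F}_{\bar v})_w$ for $w\mid p$ and the two conditions coincide away from $p$, we have containments $H^1_{str}(\mathcal{K},\mathbf{T})\subseteq H^1_{\mathcal{F}_{\bar v}}(\mathcal{K},\mathbf{T})\subseteq H^1_{\mathcal{F}}(\mathcal{K},\mathbf{T})$; moreover $H^1_{\mathcal{F}_{\bar v}}(\mathcal{K},\mathbf{T})$ is precisely the kernel of $\mathrm{loc}_v\colon H^1_{\mathcal{F}}(\mathcal{K},\mathbf{T})\to H^1_{\mathcal{F}}(\mathcal{K}_v,\mathbf{T})$, because a class in $H^1_{\mathcal{F}}$ whose localization at $v$ is trivial is exactly one satisfying the $\mathcal{F}_{\bar v}$-condition. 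So it suffices to show $\mathrm{loc}_v$ is injective.

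By Howard's Theorem \ref{How}, $H^1_{\mathcal{F}}(\mathcal{K},\mathbf{T})$ is a torsion-free $\Lambda$-module of rank one, and by Castella's construction $H^1_{\mathcal{F}}(\mathcal{K}_v,\mathbf{T})$ injects into $\Lambda$ via $\log_{\omega_E}$, hence is torsion-free as well. A nonzero $\Lambda$-homomorphism from a rank-one torsion-free module to a torsion-free module is automatically injective: if $\mathrm{loc}_v(c')=0$ with $c'\neq 0$, pick $c$ with $\mathrm{loc}_v(c)\neq 0$ and nonzero $\lambda,\mu\in\Lambda$ with $\lambda c'=\mu c$; then $\mu\,\mathrm{loc}_v(c)=0$, contradicting torsion-freeness of the target. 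Thus everything reduces to the nonvanishing of $\mathrm{loc}_v$. For this I would use the Heegner class $\kappa_1^{Hg}\in H^1_{\mathcal{F}}(\mathcal{K},\mathbf{T})$: by Proposition \ref{2.2}, $\log_{\omega_E}^2(\kappa_1^{Hg})$ equals $\mathcal{L}_{f,\mathcal{K}}^{BDP}$ up to a power of $p$ (exactly, under assumption (2)), and $\mathcal{L}_{f,\mathcal{K}}^{BDP}\neq 0$ (for instance it has vanishing $\mu$-invariant, cf. the proof of Corollary \ref{3.8}). Since $\log_{\omega_E}$ is injective this forces $\mathrm{loc}_v(\kappa_1^{Hg})\neq 0$, so $\mathrm{loc}_v$ is nonzero, hence injective, hence $H^1_{str}(\mathcal{K},\mathbf{T})\subseteq\ker(\mathrm{loc}_v)=0$.

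The only substantive ingredient is the nonvanishing of $\mathrm{loc}_v$ on $H^1_{\mathcal{F}}(\mathcal{K},\mathbf{T})$, which rests on the nontriviality of the Heegner point family; the rest is formal bookkeeping with Selmer conditions and elementary algebra over $\Lambda$. I expect the only point requiring care is that the argument be uniform in assumptions (1) and (2): under (1) Proposition \ref{2.2} holds only up to a power of $p$, but since only a nonvanishing conclusion is needed this causes no trouble. As an alternative to quoting Castella one could instead invoke the assertion recorded just before Proposition \ref{2.2} that the localization maps $H^1_{\mathcal{F}}(\mathcal{K},\mathbf{T})_P\to H^1_{\mathcal{F}_v}(\mathcal{K}_v,\mathbf{T})_P$ are nonzero for every height-one prime $P$; then $\ker(\mathrm{loc}_v)$ localizes to zero at all height-one primes, so it is pseudo-null, and being a submodule of the torsion-free module $H^1_{\mathcal{F}}(\mathcal{K},\mathbf{T})$ it must vanish.
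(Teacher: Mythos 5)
Your overall strategy coincides with the paper's: both proofs reduce the statement to the nonvanishing of the localization of $\kappa_1^{Hg}$ at $v$, and then use that $H^1_{\mathcal{F}}(\mathcal{K},\mathbf{T})$ has $\Lambda$-rank one together with torsion-freeness (you use torsion-freeness of the local target via $\log_{\omega_E}$, the paper uses torsion-freeness of $H^1(\mathcal{K},\mathbf{T})$ itself; either works, and your identification of $H^1_{\mathcal{F}_{\bar v}}$ with $\ker(\mathrm{loc}_v)$ and the rank-one argument are fine). Under assumption (2) your input for the nonvanishing --- Proposition \ref{2.2} plus Hsieh's vanishing of the $\mu$-invariant of $\mathcal{L}^{BDP}_{f,\mathcal{K}}$ --- is exactly what the paper uses.

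The gap is in case (1). The nonvanishing of $\mathcal{L}^{BDP}_{f,\mathcal{K}}$ is \emph{not} available in the paper under assumption (1): Hsieh's $\mu$-invariant result is invoked (in Corollary \ref{3.8} and in this lemma) only under assumption (2), where the relevant Heegner points live on the modular curve; under (1) neither Proposition \ref{2.2} with its undetermined $p$-power factor nor any cited nonvanishing theorem for the Shimura-curve BDP $L$-function is on record, so ``$\mathcal{L}^{BDP}_{f,\mathcal{K}}\neq 0$'' is an unproved input there. The paper instead argues case (1) directly on the Euler-system side: by Cornut--Vatsal (as recalled in the proof of Theorem \ref{How}), the specialization of $\kappa_1^{Hg}$ at some character $\chi$ of $\Gamma^-$ is non-torsion, and since $E(\mathcal{K}_n)\rightarrow E(\mathcal{K}_{n,v})$ is injective, the image of $\kappa_1^{Hg}$ in $H^1_{\mathcal{F}}(\mathcal{K}_v,\mathbf{T})$ is already nonzero --- no appeal to the $p$-adic $L$-function is needed. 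Your proposed fallback does not repair this: the assertion just before Proposition \ref{2.2} that the maps $H^1_{\mathcal{F}}(\mathcal{K},\mathbf{T})_P\rightarrow H^1_{\mathcal{F}_v}(\mathcal{K}_v,\mathbf{T})_P$ are nonzero is a forward promise (``we are going to know''), whose justification is precisely the nontriviality of the Heegner class at $v$ established in this lemma, so invoking it here is circular. Replacing that step by the Cornut--Vatsal argument makes your proof complete and essentially identical to the paper's.
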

\begin{proof}
We have
$$0\rightarrow H_{str}^1(\mathcal{K},\mathbf{T})\rightarrow H_\mathcal{F}^1(\mathcal{K},\mathbf{T})\rightarrow \oplus_w H^1_\mathcal{F}(\mathcal{K}_w,\mathbf{T})/H_{str}^1(\mathcal{K}_w,\mathbf{T}).$$
We tensor it with $F_\Lambda$. By proposition \ref{2.2}, the image of $\kappa_1^{Hg}\in H^1_\mathcal{F}(\mathcal{K}_v,\mathbf{T})$ in $H^1_\mathcal{F}(\mathcal{K}_v,\mathbf{T})$ is the $p$-adic $L$-function of Bertolini-Darmon-Prasanna. This is non-zero under assumption (2) by the result of \cite{Hsieh}. More generally under assumption (1) recall that in the proof of Theorem \ref{How} we have by \cite{CV} for some character $\chi$ of $\Gamma^-$, the specialization of $\kappa_1^{Hg}$ to $\chi$ is non-torsion. On the other hand note that the map $E(\mathcal{K}_n)\rightarrow E(\mathcal{K}_{n,v})$ is injective. So the image of $\kappa_1^{Hg}$ in $H^1_\mathcal{F}(\mathcal{K}_v,\mathbf{T})$ is still non-zero. Since $H_\mathcal{F}^1(\mathcal{K},\mathbf{T})$ is rank one, $H_{str}^1(\mathcal{K},\mathbf{T})$ must be of rank $0$. We know that $H^1(\mathcal{K},\mathbf{T})$ is torsion free as remarked in \cite[lemma 2.2.9]{Howard}. So $H_{str}^1(\mathcal{K},\mathbf{T})$ is $0$.
\end{proof}
It is in this lemma that we used a $\Lambda$-adic version injectivity of the localization map at $p$ which replaced similar assumption in \cite{Skinner}. The advantage of our argument is that such injectivity is automatic.
\begin{lemma}
$H^1(\mathcal{K},\mathbf{T})$ and $H^1(\mathcal{K},\mathbf{A})^*$ have the same $\Lambda$-rank.
\end{lemma}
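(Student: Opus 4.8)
The plan is to read both $\Lambda$-ranks off the global Euler--Poincar\'e characteristic formula over $\mathcal{K}$; since $H^1(\mathcal{K},\mathbf{A})^*$ has $\Lambda$-rank equal to $\mathrm{corank}_\Lambda H^1(\mathcal{K},\mathbf{A})$, what must be shown is that $\mathrm{rank}_\Lambda H^1(\mathcal{K},\mathbf{T})=\mathrm{corank}_\Lambda H^1(\mathcal{K},\mathbf{A})$, the essential point being that $\mathcal{K}$ is imaginary quadratic. Throughout, as in \cite{Howard}, $H^1(\mathcal{K},-)$ denotes $H^1(\mathcal{K}_\Sigma/\mathcal{K},-)$, so that $H^i(\mathcal{K},\mathbf{T})$ is finitely generated over $\Lambda$ and $H^i(\mathcal{K},\mathbf{A})$ cofinitely generated. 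As $\mathcal{K}$ has no real place, and at its unique archimedean place, which is complex, the decomposition group is trivial (so the local $H^0$ of $\mathbf{T}$ there is all of $\mathbf{T}$, of $\Lambda$-rank $\mathrm{rank}_\Lambda\mathbf{T}=\dim_{\mathbb{Q}_p}V=2$, and likewise for $\mathbf{A}$), the archimedean term in the Euler characteristic formula is $2\cdot 2-2=2$, whence (see e.g. \cite{Greenberg})
$$\mathrm{rank}_\Lambda H^1(\mathcal{K},\mathbf{T})=\mathrm{rank}_\Lambda H^0(\mathcal{K},\mathbf{T})+\mathrm{rank}_\Lambda H^2(\mathcal{K},\mathbf{T})+2,$$
and the same identity with $\mathrm{corank}$ replacing $\mathrm{rank}$ and $\mathbf{A}$ replacing $\mathbf{T}$. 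It therefore suffices to show that $H^0(\mathcal{K},\mathbf{T})$ and $H^2(\mathcal{K},\mathbf{T})$ are $\Lambda$-torsion and that $H^0(\mathcal{K},\mathbf{A})$ and $H^2(\mathcal{K},\mathbf{A})$ are $\Lambda$-cotorsion; both sides of the required equality are then $2$.

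The degree-zero groups in fact vanish. Because $\Psi$ is trivial on $G_{\mathcal{K}_\infty^-}$, we have $\mathbf{T}|_{G_{\mathcal{K}_\infty^-}}\cong T\otimes_{\mathbb{Z}_p}\Lambda$ and $\mathbf{A}|_{G_{\mathcal{K}_\infty^-}}\cong T\otimes_{\mathbb{Z}_p}\Lambda^*$ with $G_{\mathcal{K}_\infty^-}$ acting only through $T$. Since $\mathrm{Gal}(\mathcal{K}(E[p])/\mathcal{K})$ contains $\mathrm{SL}_2(\mathbb{F}_p)$ it is not a pro-$p$ group, whereas $\mathcal{K}_\infty^-/\mathcal{K}$ is, so $\mathcal{K}(E[p])$ and $\mathcal{K}_\infty^-$ are linearly disjoint over $\mathcal{K}$; hence $E[p]^{G_{\mathcal{K}_\infty^-}}=0$, and therefore $T^{G_{\mathcal{K}_\infty^-}}=0=E[p^\infty]^{G_{\mathcal{K}_\infty^-}}$. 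Taking $G_\mathcal{K}$-invariants gives $H^0(\mathcal{K},\mathbf{T})=0=H^0(\mathcal{K},\mathbf{A})$ (the first being \cite[Lemma 2.2.9]{Howard}).

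For the degree-two groups I would invoke Shapiro's lemma: as the $\Psi$-twist is trivial after restriction to $G_{\mathcal{K}_\infty^-}$, the modules $\mathbf{T}$ and $\mathbf{A}$ are respectively induced and coinduced from $T$ and $E[p^\infty]$ along $G_{\mathcal{K}_\infty^-}\subset G_\mathcal{K}$, so that
$$H^2(\mathcal{K}_\Sigma/\mathcal{K},\mathbf{A})\cong H^2(\mathcal{K}_{\infty,\Sigma}^-/\mathcal{K}_\infty^-,E[p^\infty]),\qquad H^2(\mathcal{K}_\Sigma/\mathcal{K},\mathbf{T})\cong\varprojlim_n H^2(\mathcal{K}_{n,\Sigma}/\mathcal{K}_n,T).$$
The first group vanishes by the weak Leopoldt conjecture for $E$ over the $\mathbb{Z}_p$-extension $\mathcal{K}_\infty^-$ (cf. \cite{Greenberg}); in our situation this is elementary, since each $\mathcal{K}_n$ is totally imaginary, $\Sha^2(\mathcal{K}_n,E[p^\infty])$ is dual to a subgroup of $T^{G_{\mathcal{K}_n}}=0$, and $H^2(\mathcal{K}_{n,w},E[p^\infty])\cong(T^{G_{\mathcal{K}_{n,w}}})^\vee=0$ at every place $w$ (no eigenvalue of $\mathrm{Frob}_w$ on $T^{I_w}$ is $1$). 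The second is $\Lambda$-torsion: specializing at a generic height-one prime $P$ one has $H^2(\mathcal{K}_\Sigma/\mathcal{K},\mathbf{T})\otimes_\Lambda\Lambda/P=H^2(\mathcal{K}_\Sigma/\mathcal{K},\mathbf{T}\otimes_\Lambda\Lambda/P)$ (there is no boundary term, since $\mathcal{K}$ has no real place), and the right side is finite by Poitou--Tate together with the finiteness of the local torsion groups $E(\mathcal{K}_w)[p^\infty]$. Substituting into the two Euler-characteristic identities gives $\mathrm{rank}_\Lambda H^1(\mathcal{K},\mathbf{T})=2=\mathrm{corank}_\Lambda H^1(\mathcal{K},\mathbf{A})$, which is the assertion.

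The step calling for the most care is the archimedean bookkeeping in the Euler characteristic formula: it is crucial that $\mathcal{K}$ be imaginary quadratic, so that no real place introduces an asymmetry between $\mathbf{T}$ and its Cartier dual $\mathbf{A}$; the other ingredients --- vanishing of the degree-zero cohomology, and the weak Leopoldt statement over $\mathcal{K}_\infty^-$ --- are standard.
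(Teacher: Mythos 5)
Your Euler--Poincar\'e framework and the vanishing of the degree-zero groups are fine, but the degree-two step contains a genuine error, and it is exactly the step that carries all the content. You assert that $\Sha^2(\mathcal{K}_n,E[p^\infty])$ is dual to a subgroup of $T^{G_{\mathcal{K}_n}}$; Poitou--Tate duality actually pairs $\Sha^2(\mathcal{K}_n,E[p^\infty])$ with $\Sha^1(\mathcal{K}_n,T)$ (using the Weil pairing), i.e.\ with a subquotient of $H^1$, not of $H^0$. Consequently the vanishing of all the local $H^2$'s does not give the vanishing (or cotorsionness) of $H^2(\mathcal{K}_\Sigma/\mathcal{K}_\infty^-,E[p^\infty])$, and the parallel claim that $H^2(\mathcal{K}_\Sigma/\mathcal{K},\mathbf{T}\otimes_\Lambda\Lambda/P)$ is finite ``by Poitou--Tate together with the finiteness of the local torsion groups'' fails for the same reason: the everywhere-locally-trivial part of that $H^2$ is dual to a fine-Selmer-type group $\Sha^1$ of the corresponding anticyclotomic twist of $E[p^\infty]$, and its generic finiteness along the anticyclotomic line is precisely the weak Leopoldt statement for $\mathcal{K}_\infty^-/\mathcal{K}$. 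Unlike the cyclotomic case, this is not elementary for the anticyclotomic $\mathbb{Z}_p$-extension; in this paper it is only obtained \emph{after} the present lemma: the statement that $H^1(\mathcal{K},\mathbf{T})$ has rank $2$ (equivalently, that $H^2$ is torsion and the strict Selmer group is cotorsion) is the subsequent corollary, deduced from this lemma together with $H^1_{str}(\mathcal{K},\mathbf{T})=0$, whose proof rests on the non-triviality of the Heegner point family via Cornut--Vatsal. So your argument proves a strictly stronger statement (both ranks equal $2$) than the lemma asserts, and to do so it silently assumes the deep input that the paper only derives downstream --- as written it is either unproved or circular.

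By contrast, the paper's proof deliberately avoids computing either rank. It fixes a height-one prime $P\notin\Sigma_\Lambda$ and compares $H^1(\mathcal{K},\mathbf{T})/P$ with $H^1(\mathcal{K},\mathbf{A})[P^\iota]$: the exact sequences for multiplication by a generator of $P$ (resp.\ $P^\iota$) show these agree with $H^1(\mathcal{K},\mathbf{T}/P)$ and $H^1(\mathcal{K},\mathbf{A}[P^\iota])$ up to groups controlled by $H^2(\mathcal{K},\mathbf{T})[P]$ (finite by the very definition of $\Sigma_\Lambda$) and $H^0(\mathcal{K},\mathbf{A})$ (finite), and then the identification of $H^1(\mathcal{K},\mathbf{T}_P)$ with the Tate module of $H^1(\mathcal{K},\mathbf{A}_{P^\iota})$ gives equality of the $\mathbb{Z}_p$-rank and $\mathbb{Z}_p$-corank at each such $P$, hence equality of $\Lambda$-ranks. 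If you want to salvage your approach, you must either prove the anticyclotomic weak Leopoldt statement by some independent means (e.g.\ import the Cornut--Vatsal non-vanishing at this stage) or retreat, as the paper does, to proving only the equality $\mathrm{rank}_\Lambda H^1(\mathcal{K},\mathbf{T})=\mathrm{corank}_\Lambda H^1(\mathcal{K},\mathbf{A})$ without identifying the common value.
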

\begin{proof}
We need only to prove that for a generic set of height one prime $P$, $H^1(\mathcal{K},\mathbf{T})/P$ and $H^1(\mathcal{K},\mathbf{A})^*/P^\iota$ have the same $\mathbb{Z}_p$-rank. For any height one prime $\mathfrak{P}$ we define $S_\mathfrak{P}$ to be the integral closure of $\Lambda/\mathfrak{P}$ and $\mathbf{T}_\mathfrak{P}$ to be the Galois representation obtained by $\mathbf{T}/\mathfrak{P}\mathbf{T}$ base changed to $S_\mathfrak{P}$. Let $\Phi_\mathfrak{P}$ be the fraction field of $S_\mathfrak{P}$ and $A_\mathfrak{P}$ the base change of $\mathbf{T}_\mathfrak{P}$ to $\Phi_\mathfrak{P}/S_\mathfrak{P}$. Suppose $P\not\in\Sigma_\Lambda$ and $g_P$ be a generator of $P$. From:
$$0\rightarrow \mathbf{T}\rightarrow \mathbf{T}\rightarrow \mathbf{T}/P\rightarrow 0$$
where the second arrow is given by multiplication by $g_P$. We have:
$$H^1(\mathcal{K},\mathbf{T})/P\hookrightarrow H^1(\mathcal{K},\mathbf{T}/P)\rightarrow H^2(\mathcal{K},\mathbf{T})[P].$$
From
$$0\rightarrow \mathbf{A}[P^\iota]\rightarrow \mathbf{A}\rightarrow \mathbf{A}\rightarrow 0$$
we have
$$H^0(\mathcal{K},\mathbf{A})\rightarrow H^1(\mathcal{K},\mathbf{A}[P^\iota])\twoheadrightarrow H^1(\mathcal{K},\mathbf{A})[P^\iota].$$
Note that $H^0(\mathcal{K},\mathbf{A})$ has finite cardinality. On the other hand we have:
$$H^1(\mathcal{K},\mathbf{T}/P)\rightarrow H^1(\mathcal{K},\mathbf{T}_P)$$
$$H^1(\mathcal{K},\mathbf{A}_{P^\iota})\rightarrow H^1(\mathcal{K},\mathbf{A}[P^\iota])$$
both have finite kernel and cokernel since $P\in\Sigma_\Lambda$, by \cite[Lemma 2.2.7]{Howard}. Also $H^1(\mathcal{K},\mathbf{T}_P)$ is the $\mathfrak{m}_P$-adic Tate module of $H^1(\mathcal{K},\mathbf{A}_{P^\iota})$. So the $\mathbb{Z}_p$-rank of $H^1(\mathcal{K},\mathbf{T})/P$ is the $\mathbb{Z}_p$-corank of $H^1(\mathcal{K},\mathbf{A}[P^\iota])$ and we are done.
\end{proof}
\begin{corollary}\label{3.5}
The $H^1(\mathcal{K},\mathbf{T})$ has rank $2$ and $H^1_{str}(\mathcal{K},\mathbf{A})^*$ is $\Lambda$-torsion.
\end{corollary}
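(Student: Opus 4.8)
The plan is to prove both assertions by accounting for $\Lambda$-ranks in two instances of the Poitou--Tate exact sequence above, fed by Theorem~\ref{How}, the two lemmas just proved, and some local Euler characteristic computations. Throughout write $H^i(\mathcal{K},-)$ for $H^i(\mathcal{K}_\Sigma/\mathcal{K},-)$, and let $\mathrm{rel}$ be the auxiliary Selmer structure with $\mathrm{rel}_w=H^1(\mathcal{K}_w,-)$ for $w\mid p$ and $\mathrm{rel}_w=\mathcal{F}_w$ for $w\nmid p$; then $\mathrm{str}^*=\mathrm{rel}$, $\mathrm{rel}^*=\mathrm{str}$, and $\mathcal{F}$ is self-dual at the primes above $p$ (there $T^+$ is its own annihilator under the Weil pairing). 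The local inputs I would establish first are: for $w\mid p$ (so $\mathcal{K}_w=\mathbb{Q}_p$ since $p$ splits) the groups $H^0(\mathcal{K}_w,\mathbf{T})$, $H^0(\mathcal{K}_w,\mathbf{T}^\pm)$ and, by local duality together with good ordinary reduction, $H^2(\mathcal{K}_w,\mathbf{T})$, $H^2(\mathcal{K}_w,\mathbf{T}^\pm)$ are all $\Lambda$-torsion because $\Psi$ is infinitely ramified at $w$, whence the local Euler characteristic formula gives $\mathrm{rank}_\Lambda H^1(\mathcal{K}_w,\mathbf{T})=2$ and, from $0\to\mathbf{T}^+\to\mathbf{T}\to\mathbf{T}^-\to 0$, $\mathrm{rank}_\Lambda H^1_\mathcal{F}(\mathcal{K}_w,\mathbf{T})=\mathrm{rank}_\Lambda H^1(\mathcal{K}_w,\mathbf{T}^+)=1$; and for $w\in\Sigma$ with $w\nmid p$ the groups $H^1(\mathcal{K}_w,\mathbf{T})$ are $\Lambda$-torsion and $H^1(\mathcal{K}_w,\mathbf{A})$ are $\Lambda$-cotorsion (local Euler characteristic $0$, and the relevant $H^0$'s have $\Lambda$-rank $0$ since $\Psi$ is unramified away from $p$ and $E$ has good or multiplicative reduction there), so that $H^1_{\mathrm{rel}}(\mathcal{K},\mathbf{T})$ has the same $\Lambda$-rank as $H^1(\mathcal{K},\mathbf{T})$, and $H^1_{\mathrm{rel}}(\mathcal{K},\mathbf{A})$ the same $\Lambda$-corank as $H^1(\mathcal{K},\mathbf{A})$.

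The next step is to apply the Poitou--Tate sequence to the inclusion $\mathrm{str}\subseteq\mathcal{F}$, summing over the two places above $p$ where the structures differ. Using $H^1_{\mathrm{str}}(\mathcal{K},\mathbf{T})=0$, $H^1_{\mathrm{str}}(\mathcal{K}_w,\mathbf{T})=0$ for $w\mid p$, $\mathrm{str}^*=\mathrm{rel}$ and the self-duality of $\mathcal{F}$, the sequence reads
$$0\to H^1_\mathcal{F}(\mathcal{K},\mathbf{T})\to\bigoplus_{w\mid p}H^1_\mathcal{F}(\mathcal{K}_w,\mathbf{T})\to H^1_{\mathrm{rel}}(\mathcal{K},\mathbf{A})^*\to X\to 0.$$
Taking $\Lambda$-ranks and using $\mathrm{rank}_\Lambda H^1_\mathcal{F}(\mathcal{K},\mathbf{T})=\mathrm{rank}_\Lambda X=1$ (Theorem~\ref{How}) together with the local ranks above gives $1-2+\mathrm{rank}_\Lambda H^1_{\mathrm{rel}}(\mathcal{K},\mathbf{A})^*-1=0$, so $\mathrm{rank}_\Lambda H^1_{\mathrm{rel}}(\mathcal{K},\mathbf{A})^*=2$. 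By the lemma asserting $\mathrm{rank}_\Lambda H^1(\mathcal{K},\mathbf{T})=\mathrm{rank}_\Lambda H^1(\mathcal{K},\mathbf{A})^*$ and the comparison of $\mathrm{rel}$ with the unrestricted cohomology from the first paragraph, we conclude $\mathrm{rank}_\Lambda H^1(\mathcal{K},\mathbf{T})=\mathrm{rank}_\Lambda H^1_{\mathrm{rel}}(\mathcal{K},\mathbf{T})=\mathrm{rank}_\Lambda H^1_{\mathrm{rel}}(\mathcal{K},\mathbf{A})^*=2$, which is the first assertion.

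Finally I apply Poitou--Tate to $\mathrm{str}\subseteq\mathrm{rel}$, again summing over $w\mid p$; with $H^1_{\mathrm{str}}(\mathcal{K},\mathbf{T})=0$, $\mathrm{str}^*=\mathrm{rel}$ and $\mathrm{rel}^*=\mathrm{str}$ it becomes
$$0\to H^1_{\mathrm{rel}}(\mathcal{K},\mathbf{T})\to\bigoplus_{w\mid p}H^1(\mathcal{K}_w,\mathbf{T})\to H^1_{\mathrm{rel}}(\mathcal{K},\mathbf{A})^*\to H^1_{\mathrm{str}}(\mathcal{K},\mathbf{A})^*\to 0.$$
The rank count $2-(2+2)+2-\mathrm{rank}_\Lambda H^1_{\mathrm{str}}(\mathcal{K},\mathbf{A})^*=0$ then shows that $H^1_{\mathrm{str}}(\mathcal{K},\mathbf{A})^*$ has $\Lambda$-rank $0$, i.e.\ is $\Lambda$-torsion.

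The part that actually requires care is the package of local vanishing statements in the first paragraph: that all the auxiliary $H^0$'s and $H^2$'s at the places $w\mid p$ and $w\in\Sigma\setminus\{v,\bar v\}$ are $\Lambda$-torsion. This is what makes the Euler-characteristic bookkeeping clean, and it rests on the infinite ramification of $\Psi$ at $v$ and $\bar v$, on $\Psi$ being unramified away from $p$, on good ordinary reduction at $p$ (so the unit-root Frobenius eigenvalue is $\neq1$), and on the squarefree conductor (so $E$ has no additive reduction). Once these are in hand everything is a rank count, with the lemma comparing the ranks of $H^1(\mathcal{K},\mathbf{T})$ and $H^1(\mathcal{K},\mathbf{A})^*$ supplying the one extra equation needed to determine $\mathrm{rank}_\Lambda H^1_{\mathrm{str}}(\mathcal{K},\mathbf{A})^*$.
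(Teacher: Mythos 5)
Your argument is correct, and it runs on the same engine as the paper's proof (the Poitou--Tate proposition, the vanishing $H^1_{str}(\mathcal{K},\mathbf{T})=0$, the lemma equating the $\Lambda$-ranks of $H^1(\mathcal{K},\mathbf{T})$ and $H^1(\mathcal{K},\mathbf{A})^*$, and the standard local rank computations you rightly flag as the part needing care), but the bookkeeping is organized differently. The paper uses a \emph{single} Poitou--Tate sequence, for $str\subseteq \mathrm{rel}$: writing $r$ for the common rank of $H^1(\mathcal{K},\mathbf{T})$ and $H^1(\mathcal{K},\mathbf{A})^*$ and $s$ for the rank of $H^1_{str}(\mathcal{K},\mathbf{A})^*$, the rank count gives $2r=4+s$, and the proof closes by observing that $H^1_{str}(\mathcal{K},\mathbf{A})^*$ is a quotient of $X$, hence $s\le 1$ by Theorem~\ref{How}, so parity forces $s=0$ and $r=2$. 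You instead make \emph{two} applications: first $str\subseteq\mathcal{F}$, which together with $\mathrm{rank}\,H^1_\mathcal{F}(\mathcal{K},\mathbf{T})=\mathrm{rank}\,X=1$, the self-duality of $\mathcal{F}$ at $p$, and $\mathrm{rank}\,H^1_\mathcal{F}(\mathcal{K}_w,\mathbf{T})=1$ pins down $\mathrm{rank}\,H^1_{\mathrm{rel}}(\mathcal{K},\mathbf{A})^*=2$ directly; then the lemma gives $r=2$, and the second sequence (the one the paper uses) gives $s=0$ without any appeal to the quotient bound or to parity. What your route buys is that it determines the rank of the relaxed-at-$p$ dual Selmer group as an intermediate fact and avoids the slightly slick ``quotient of $X$ plus parity'' step; what it costs is heavier reliance on Theorem~\ref{How} (both rank-one statements rather than just $\mathrm{rank}\,X=1$) and on the self-duality of $\mathcal{F}$ and the rank-one computation of the ordinary local condition at $v,\bar v$ --- all of which are, however, available and indeed used elsewhere in the paper (e.g.\ via Castella's big logarithm and in Propositions~\ref{3.11}--\ref{3.12}), so there is no gap.
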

\begin{proof}
This follows from the above lemma and the Poitou-Tate long exact sequence taking $\mathcal{F}=str$ and $\mathcal{G}$ to have empty restriction at primes above $p$ and same as $\mathcal{F}$ elsewhere, by noting that $H_{str}^1(\mathcal{K},\mathbf{A})^*$ is a quotient of $H_\mathcal{F}^1(\mathcal{K},\mathbf{A})^*$ and thus has rank not greater than one.
\end{proof}

\begin{lemma}\label{3.4}
We have exact sequence:
$$0\rightarrow H_\mathcal{F}^1(\mathcal{K},\mathbf{T})\rightarrow H_{\mathcal{F}_v^*}^1(\mathcal{K},\mathbf{T})\rightarrow \mathrm{coker}\rightarrow 0$$
where $\mathrm{coker}$ has finite cardinality.
\end{lemma}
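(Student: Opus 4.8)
The plan is to compare the Selmer conditions $\mathcal{F}$ and $\mathcal{F}_v^*$ place by place and then feed the local discrepancy into the Poitou--Tate exact sequence. Under the identifications coming from local Tate duality, $\mathcal{F}$ and $\mathcal{F}_v^*$ agree at every place $w\neq\bar v$: for $w\nmid p$ the unramified condition is its own orthogonal complement, and at $v$ the ordinary condition is self-dual, since $T^+\cong(T^-)^*(1)$ for an ordinary elliptic curve and hence the same holds $\Lambda$-adically. At $\bar v$, on the other hand, the condition $(\mathcal{F}_v)_{\bar v}=0$ dualizes to all of $H^1(\mathcal{K}_{\bar v},\mathbf{T})$, which strictly contains the ordinary condition $\mathcal{F}_{\bar v}$. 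In particular $H^1_\mathcal{F}(\mathcal{K},\mathbf{T})\subseteq H^1_{\mathcal{F}_v^*}(\mathcal{K},\mathbf{T})$, the inclusion differing only in the condition at $\bar v$.

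Next I would apply the Poitou--Tate exact sequence stated above, with $\bar v$ in place of $v$, to the pair $\mathcal{F}\subseteq\mathcal{G}:=\mathcal{F}_v^*$; using $(\mathcal{F}_v^*)^*=\mathcal{F}_v$ and $(\mathcal{F}_v^*)_{\bar v}=H^1(\mathcal{K}_{\bar v},\mathbf{T})$ this gives
\[
0\to H^1_\mathcal{F}(\mathcal{K},\mathbf{T})\to H^1_{\mathcal{F}_v^*}(\mathcal{K},\mathbf{T})\to \frac{H^1(\mathcal{K}_{\bar v},\mathbf{T})}{H^1_\mathcal{F}(\mathcal{K}_{\bar v},\mathbf{T})}\to H^1_{\mathcal{F}^*}(\mathcal{K},\mathbf{A})^*\to H^1_{\mathcal{F}_v}(\mathcal{K},\mathbf{A})^*\to 0.
\]
Hence $\mathrm{coker}$ embeds into $H^1(\mathcal{K}_{\bar v},\mathbf{T})/H^1_\mathcal{F}(\mathcal{K}_{\bar v},\mathbf{T})$, which by the definition of $\mathcal{F}$ at $\bar v$ embeds into $H^1(\mathcal{K}_{\bar v},\mathbf{T}^-)$. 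So it suffices to show $\mathrm{coker}$ is $\Lambda$-torsion and that $H^1(\mathcal{K}_{\bar v},\mathbf{T}^-)$ has finite $\Lambda$-torsion.

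The first point is a rank count in the four term sequence. Since $\mathcal{K}_{\bar v}=\mathbb{Q}_p$ and $\mathbf{T}^-$ is free of rank one over $\Lambda$, the local Euler characteristic formula gives $\mathrm{rank}_\Lambda H^1(\mathcal{K}_{\bar v},\mathbf{T}^-)=1$; moreover this module is $\Lambda$-torsion free, because at each height one prime $P$ the character giving the $G_{\mathbb{Q}_p}$-action on $\mathbf{T}^-$ is non-trivial modulo $P$ — the anticyclotomic part is infinitely ramified, and at $P=(W)$ one is left with the unramified character $\mathrm{Frob}\mapsto\alpha^{\pm1}$, non-trivial since $\alpha\neq1$ by the Hasse bound ($p\geq5$) — so the residual $H^0$ vanishes and $H^1(\mathcal{K}_{\bar v},\mathbf{T}^-)_P$ is free over $\Lambda_P$. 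Since $\mathcal{F}$ is, up to a finite discrepancy, its own dual, $H^1_{\mathcal{F}^*}(\mathcal{K},\mathbf{A})^*$ has the same $\Lambda$-rank as $X=H^1_\mathcal{F}(\mathcal{K},\mathbf{A})^*$, namely $1$ by Theorem~\ref{1.1}. Finally $H^1_{\mathcal{F}_v}(\mathcal{K},\mathbf{A})$ is $\Lambda$-cotorsion: $\mathcal{F}_v$ is contained in the condition ``$v$'', so $H^1_{\mathcal{F}_v}(\mathcal{K},\mathbf{A})\subseteq H^1_v(\mathcal{K},\mathbf{A})$, and $H^1_v(\mathcal{K},\mathbf{A})$ is cotorsion because $X^{anti}_{f,\mathcal{K},v}$ is $\Lambda$-torsion by Corollary~\ref{3.8} together with the non-vanishing of $\mathcal{L}_{f,\mathcal{K}}^{BDP}$ recorded above (under assumption (1) the corollary gives this only after inverting $p$, which still forces $\Lambda$-rank zero). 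The alternating sum of $\Lambda$-ranks then gives $\mathrm{rank}_\Lambda\mathrm{coker}=1-1+0=0$.

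Consequently $\mathrm{coker}$ is a $\Lambda$-torsion submodule of the torsion free module $H^1(\mathcal{K}_{\bar v},\mathbf{T}^-)$, so it vanishes and is in particular finite; alternatively, without the torsion-freeness, a $\Lambda$-torsion submodule of $H^1(\mathcal{K}_{\bar v},\mathbf{T})/H^1_\mathcal{F}(\mathcal{K}_{\bar v},\mathbf{T})$ is pseudo-null, hence finite, $\Lambda$ being a two dimensional regular local ring. The step demanding the most care is the rank bookkeeping: one must know both that the local quotient at $\bar v$ carries essentially no $\Lambda$-torsion (this is where $p\geq5$ enters, via the Hasse bound) and that $\mathrm{rank}_\Lambda H^1_{\mathcal{F}^*}(\mathcal{K},\mathbf{A})^*=1$, which I would deduce from Theorem~\ref{1.1} together with the near self-duality of $\mathcal{F}$.
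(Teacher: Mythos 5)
There is a genuine gap at the decisive step of your rank bookkeeping: the claim that $H^1_{\mathcal{F}_v}(\mathcal{K},\mathbf{A})^*$ has $\Lambda$-rank $0$. You justify it by Corollary \ref{3.8} plus the nonvanishing of $\mathcal{L}^{BDP}_{f,\mathcal{K}}$, but the divisibility there (and in the main theorem of \cite{WAN} from which it is deduced) goes the wrong way for this purpose: it says $\mathrm{char}_\Lambda(X^{anti}_{f,\mathcal{K},v})\subseteq(\mathcal{L}^{BDP}_{f,\mathcal{K}})$, i.e.\ the $p$-adic $L$-function divides the characteristic ideal. If the module were non-torsion its characteristic ideal is $(0)$ and the containment holds vacuously, so no torsionness can be extracted; the Eisenstein-congruence input of \cite{WAN} only bounds Selmer groups from below. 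Indeed, in the paper the torsionness of $H^1_v(\mathcal{K},\mathbf{A})^*$ (hence of its submodule quotient $H^1_{\mathcal{F}_v}(\mathcal{K},\mathbf{A})^*$) is only established in Propositions \ref{3.11} and \ref{3.12}, whose proofs use Lemma \ref{3.4} itself; so patching your argument by invoking those results would be circular. Since, by global duality, $\mathrm{rank}_\Lambda H^1_{\mathcal{F}_v^*}(\mathcal{K},\mathbf{T})-\mathrm{corank}_\Lambda H^1_{\mathcal{F}_v}(\mathcal{K},\mathbf{A})=1$, the cotorsion claim you assume is exactly equivalent to the rank-one statement the lemma is really about; it cannot be taken as an input.

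The paper closes precisely this point by a different argument, a $\Lambda$-adic version of \cite[Lemma 2.3.2]{Skinner}: using that $H^1(\mathcal{K},\mathbf{T})$ has $\Lambda$-rank $2$ and $H^1_{\mathcal{F}}(\mathcal{K},\mathbf{T})$ has rank $1$ (Theorem \ref{1.1}), the image of $H^1(\mathcal{K},\mathbf{T})/H^1_{\mathcal{F}}(\mathcal{K},\mathbf{T})$ in the local quotients at $v$ and $\bar v$ has rank $1$ and is stable under $\iota\circ c$, which interchanges the two places; if $H^1_{\mathcal{F}_v^*}(\mathcal{K},\mathbf{T})$ had rank $2$ this image would have rank $2$, a contradiction. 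You need this (or an equivalent) conjugation argument to get $\mathrm{rank}_\Lambda\,\mathrm{coker}=0$. The remainder of your proposal is fine and even a reasonable variant: identifying $\mathcal{F}_v^*$ with the structure relaxed at $\bar v$, running Poitou--Tate, and bounding the torsion of $H^1(\mathcal{K}_{\bar v},\mathbf{T}^-)$ via vanishing of $H^0$ modulo each height-one prime (using $\alpha\neq 1$ from the Hasse bound) is an acceptable substitute for the paper's appeal to Castella's quasi-isomorphism of the local quotient with $\Lambda$; just note that your ``alternatively, pseudo-null'' remark is only valid because your localization argument shows the local quotient has no height-one torsion, not for an arbitrary module.
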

\begin{proof}
We first claim that $H_{\mathcal{F}_v^*}^1(\mathcal{K},\mathbf{T})$ is a rank $1$ $\Lambda$-module. This follows from a $\Lambda$-adic analogue of the argument \cite[lemma 2.3.2]{Skinner}. The rank is obviously at least one. By the above lemme $H^1(\mathcal{K},\mathbf{T})$ has rank $2$. We consider the image of $H^1(\mathcal{K},\mathbf{T})/H_\mathcal{F}^1(\mathcal{K},\mathbf{T})\hookrightarrow \oplus_v H^1(\mathcal{K}_v,\mathbf{T})/H^1_\mathcal{F}(\mathcal{K}_v,\mathbf{T})$. It is of rank $1$ over $\Lambda$, and is invariant under $\iota\circ c$ ($c$ is the complex conjugation). If $H_{\mathcal{F}_v^*}^1(\mathcal{K},\mathbf{T})$ is rank $2$ then we get that the above image is rank $2$, a contradiction.

Recall we have $H^1(\mathcal{K}_v,\mathbf{T})/H^1_\mathcal{F}(\mathcal{K}_v,\mathbf{T})\hookrightarrow H^1(\mathcal{K}_v,\mathbf{T}'')$ which maps to $\Lambda$ with finite kernel and cokernel. By the Poitou-Tate exact sequence, the $\mathrm{coker}$ is injected to the torsion part of $H^1(\mathcal{K}_v,\mathbf{T})/H^1_\mathcal{F}(\mathcal{K}_v,\mathbf{T})$, which is finite.
\end{proof}
\begin{lemma}\label{3.5}
$H_{\bar{v}}^1(\mathcal{K},\mathbf{T})$ is $0$.
\end{lemma}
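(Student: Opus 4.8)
The plan is to realize $H^1_{\bar v}(\mathcal{K},\mathbf{T})$ as the kernel of a localization map out of the rank-one $\Lambda$-module $H^1_{\mathcal{F}_v^*}(\mathcal{K},\mathbf{T})$ furnished by Lemma~\ref{3.4}, and then to show that this map is nonzero by invoking the same nontriviality of the Heegner class at $v$ that was used to prove $H^1_{str}(\mathcal{K},\mathbf{T})=0$.

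First I would compare the two Selmer conditions $\bar v$ and $\mathcal{F}_v^*$. They impose identical local conditions at every place away from $p$ and at $\bar v$ (the full local cohomology in both cases), and they differ only at $v$: the condition $\bar v$ imposes the zero subgroup, while $\mathcal{F}_v^*$ imposes the ordinary subgroup $H^1_\mathcal{F}(\mathcal{K}_v,\mathbf{T})$. In particular $\bar v\subseteq\mathcal{F}_v^*$, so $H^1_{\bar v}(\mathcal{K},\mathbf{T})\subseteq H^1_{\mathcal{F}_v^*}(\mathcal{K},\mathbf{T})$; and since for any class in $H^1_{\mathcal{F}_v^*}(\mathcal{K},\mathbf{T})$ the localization at $v$ already lies in $H^1_\mathcal{F}(\mathcal{K}_v,\mathbf{T})\hookrightarrow H^1(\mathcal{K}_v,\mathbf{T})$, unwinding the definitions gives
$$H^1_{\bar v}(\mathcal{K},\mathbf{T})=\ker\Bigl(H^1_{\mathcal{F}_v^*}(\mathcal{K},\mathbf{T})\xrightarrow{\,\mathrm{loc}_v\,}H^1_\mathcal{F}(\mathcal{K}_v,\mathbf{T})\Bigr).$$

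Next I would use Lemma~\ref{3.4}, which gives that $H^1_{\mathcal{F}_v^*}(\mathcal{K},\mathbf{T})$ is a $\Lambda$-module of rank one and contains $H^1_\mathcal{F}(\mathcal{K},\mathbf{T})$ (the inclusion of conditions $\mathcal{F}\subseteq\mathcal{F}_v^*$ being clear); in particular it contains $\kappa_1^{Hg}$. Exactly as in the proof that $H^1_{str}(\mathcal{K},\mathbf{T})=0$, Proposition~\ref{2.2} together with the $\mu$-invariant result of \cite{Hsieh} shows $\mathrm{loc}_v(\kappa_1^{Hg})\neq 0$ in $H^1_\mathcal{F}(\mathcal{K}_v,\mathbf{T})$ (under assumption (1) this is nonzero up to a power of $p$, which suffices). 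Since $H^1_\mathcal{F}(\mathcal{K}_v,\mathbf{T})$ is torsion-free — it embeds into $\Lambda$ via $\log_{\omega_E}$ — the image of $\mathrm{loc}_v$ contains a non-torsion element and is a quotient of the rank-one module $H^1_{\mathcal{F}_v^*}(\mathcal{K},\mathbf{T})$, hence has $\Lambda$-rank exactly one, so its kernel has $\Lambda$-rank zero. Thus $H^1_{\bar v}(\mathcal{K},\mathbf{T})$ has $\Lambda$-rank zero, and as a submodule of $H^1(\mathcal{K},\mathbf{T})$ it is torsion-free by \cite[Lemma 2.2.9]{Howard}; a torsion-free module of rank zero is trivial, so $H^1_{\bar v}(\mathcal{K},\mathbf{T})=0$.

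I do not expect a real obstacle here: the argument is short, and the only delicate point is the first step — being careful that relaxing $\mathcal{F}$ at $\bar v$ (which produces $\mathcal{F}_v^*$) and then re-imposing the zero condition at $v$ recovers exactly the condition $\bar v$, and that the localization in question genuinely takes values in $H^1_\mathcal{F}(\mathcal{K}_v,\mathbf{T})$, so that the nonvanishing of $\mathrm{loc}_v(\kappa_1^{Hg})$ established earlier in that group is precisely what is needed. Alternatively, one can reach the same conclusion by feeding $\mathcal F=\bar v$ and $\mathcal G=\mathcal F_v^*$ into the Poitou--Tate exact sequence and reading off ranks, using $H^1_{str}(\mathcal{K},\mathbf{T})=0$ and the $\Lambda$-torsionness of $X^{anti}_{f,\mathcal{K},v}$.
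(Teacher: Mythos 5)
Your argument is correct, but it is genuinely different from the paper's. The paper's own proof is a one-line reduction: since $H^1(\mathcal{K},\mathbf{T})$ is torsion-free, it suffices to show $H^1_{\bar v}(\mathcal{K},\mathbf{T})$ is $\Lambda$-torsion, and this is obtained by rerunning the $\Lambda$-adic analogue of \cite[Lemma 2.3.2]{Skinner} --- the same global-duality plus complex-conjugation rank count already used to prove Lemma~\ref{3.4} --- with no reference to the Heegner class. You instead deduce the statement from results already established in the paper: you place $H^1_{\bar v}(\mathcal{K},\mathbf{T})$ inside the kernel of $\mathrm{loc}_v$ on the rank-one module $H^1_{\mathcal{F}_v^*}(\mathcal{K},\mathbf{T})$ of Lemma~\ref{3.4}, and force that kernel to have rank zero because $\mathrm{loc}_v(\kappa_1^{Hg})$ is nonzero in the torsion-free module $H^1_\mathcal{F}(\mathcal{K}_v,\mathbf{T})$ --- exactly the nonvanishing input (Castella's formula with \cite{Hsieh} under assumption (2), \cite{CV} under assumption (1)) used for $H^1_{str}(\mathcal{K},\mathbf{T})=0$ --- and then conclude by torsion-freeness of $H^1(\mathcal{K},\mathbf{T})$. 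Your route buys a concrete, checkable deduction from prior lemmas at the price of invoking the deep nonvanishing of the Heegner class at $v$, whereas the paper's argument is a pure duality/rank computation parallel to Lemma~\ref{3.4} and independent of that input. One small caution: your asserted equality $H^1_{\bar v}(\mathcal{K},\mathbf{T})=\ker\bigl(\mathrm{loc}_v\colon H^1_{\mathcal{F}_v^*}(\mathcal{K},\mathbf{T})\rightarrow H^1_\mathcal{F}(\mathcal{K}_v,\mathbf{T})\bigr)$ is in general only a containment $\subseteq$, since at bad primes away from $p$ the unramified conditions for $\mathbf{T}$ and $\mathbf{A}$ are mutually orthogonal but need not be exact annihilators; the containment is all your rank argument needs, so the proof stands as written.
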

\begin{proof}
Since $H^1(\mathcal{K},\mathbf{T})$ is torsion free we only need to show $H_{\bar{v}}^1(\mathcal{K},\mathbf{T})$ is torsion. Again this follows from a $\Lambda$-adic analogue of \cite[lemma 2.3.2]{Skinner} in a completely same way as the above lemma.
\end{proof}

The following Proposition is the key of the whole argument.
\begin{proposition}\label{3.9}
Consider the map $H^1(\mathcal{K}_v,\mathbf{T})/H_{\mathcal{F}_v}^1(\mathcal{K}_v,\mathbf{T})\rightarrow H_{\mathcal{F}^*}^1(\mathcal{K},\mathbf{A})^*$ which is the Pontryagin dual of the natural map
$$H_{\mathcal{F}^*}^1(\mathcal{K},\mathbf{A})\rightarrow H_{\mathcal{F}_v}^1(\mathcal{K}_v,\mathbf{A}).$$
Then for any height one prime $P$ of $\Lambda$ as above. We localize the above map at $P$ and compose it with projection to the free-$\Lambda_P$ part of $H_{\mathcal{F}^*}^1(\mathcal{K},\mathbf{A})^*_P$. We write this $\Lambda_P$-module map as $j_{P,v}$. Then:
$$\mathrm{ord}_P(\mathrm{coker}j_{P^\iota,v})=f_{v,P}.$$
We can define $j_{P,\bar{v}}$ similarly and have $\mathrm{ord}_P(\mathrm{coker}j_{P^\iota,\bar{v}})=f_{\bar{v},P}$.
\end{proposition}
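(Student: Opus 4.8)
The plan is to compute both $\mathrm{ord}_{P^\iota}(\mathrm{coker}\,j_{P^\iota,v})$ and $f_{v,P}$ by feeding suitable pairs of Selmer conditions into the Poitou--Tate exact sequence above, and then to match the two answers by $\iota$- and $c$-symmetry. First I would recognize the map in the statement as a Poitou--Tate boundary map: applied to the pair $\mathcal{F}\subseteq\mathcal{G}$, where $\mathcal{G}$ agrees with $\mathcal{F}$ away from $v$ and has no condition at $v$, the Poitou--Tate sequence reads --- using that the ordinary local condition at $v$ is its own annihilator under Tate duality, so the dual-side local term at $v$ is $H^1_{\mathcal{F}_v}(\mathcal{K}_v,\mathbf{A})$ ---
$$0\to H^1_{\mathcal{G}}(\mathcal{K},\mathbf{T})/H^1_\mathcal{F}(\mathcal{K},\mathbf{T})\to H^1(\mathcal{K}_v,\mathbf{T})/H^1_{\mathcal{F}_v}(\mathcal{K}_v,\mathbf{T})\xrightarrow{\ \lambda\ } H^1_{\mathcal{F}^*}(\mathcal{K},\mathbf{A})^*\to H^1_{\mathcal{G}^*}(\mathcal{K},\mathbf{A})^*\to 0,$$
and $\lambda$ is exactly the map of the statement, so that $j_{P^\iota,v}$ is the $P^\iota$-localization of $\lambda$ followed by the projection onto the free $\Lambda_{P^\iota}$-part. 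By (the proof of) Lemma~\ref{3.4} together with the $c$-equivariance of $\mathbf{T}$ (complex conjugation swaps $v$ and $\bar v$ and acts on $\Lambda$ through $\iota$), $H^1_{\mathcal{G}}(\mathcal{K},\mathbf{T})$ has $\Lambda$-rank one, so $\ker\lambda$ is $\Lambda$-torsion; and $\mathcal{G}^*$ is, up to a finite discrepancy coming from the self-duality of the unramified local conditions at the primes $w\mid N$, the ``strict at $v$'' condition for $\mathbf{A}$, whose Selmer group is a submodule of the cotorsion module $X^{anti}_{f,\mathcal{K},v}$ of Corollary~\ref{3.8} (again via $c$-equivariance), so $H^1_{\mathcal{G}^*}(\mathcal{K},\mathbf{A})^*$ is $\Lambda$-torsion. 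Localizing at $P^\iota$ for $P\notin\Sigma_\Lambda$: the ring $\Lambda_{P^\iota}$ is then a discrete valuation ring, all finite modules vanish, $H^1(\mathcal{K}_v,\mathbf{T})/H^1_{\mathcal{F}_v}(\mathcal{K}_v,\mathbf{T})$ becomes free of rank one by Castella's quasi-isomorphism, $\lambda_{P^\iota}$ is injective, and a diagram chase over $\Lambda_{P^\iota}$ gives
$$\mathrm{ord}_{P^\iota}(\mathrm{coker}\,j_{P^\iota,v})=\mathrm{ord}_{P^\iota}\!\big(\mathrm{char}\,H^1_{\mathcal{G}^*}(\mathcal{K},\mathbf{A})^*\big)-\mathrm{ord}_{P^\iota}\!\big(\mathrm{char}\,(H^1_{\mathcal{F}^*}(\mathcal{K},\mathbf{A})^*)_{\mathrm{tors}}\big).$$

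Next I would run the parallel computation for $f_{v,P}$ using the Poitou--Tate sequence for the pair $\mathcal{F}_{\bar v}\subseteq\mathcal{F}$, which differ only at $v$ (strict versus ordinary). Its boundary map is the restriction $H^1_\mathcal{F}(\mathcal{K},\mathbf{T})\to H^1_{\mathcal{F}_v}(\mathcal{K}_v,\mathbf{T})$ whose $P$-cokernel has order $f_{v,P}$ by definition, and --- since this restriction map is non-zero (as recalled above), hence injective on a rank-one module --- the sequence yields a short exact sequence
$$0\to\mathrm{coker}\!\big(H^1_\mathcal{F}(\mathcal{K},\mathbf{T})\to H^1_{\mathcal{F}_v}(\mathcal{K}_v,\mathbf{T})\big)\to H^1_{(\mathcal{F}_{\bar v})^*}(\mathcal{K},\mathbf{A})^*\to H^1_{\mathcal{F}^*}(\mathcal{K},\mathbf{A})^*\to 0$$
with $H^1_{(\mathcal{F}_{\bar v})^*}(\mathcal{K},\mathbf{A})^*$ of $\Lambda$-rank one ($(\mathcal{F}_{\bar v})^*$ being, up to finite discrepancy, the ``full at $v$'' condition for $\mathbf{A}$, so that the middle term is an extension of the rank-one $H^1_{\mathcal{F}^*}(\mathcal{K},\mathbf{A})^*$ by a torsion module). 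Localizing at $P$ and chasing over $\Lambda_P$,
$$f_{v,P}=\mathrm{ord}_{P}\!\big(\mathrm{char}\,(H^1_{(\mathcal{F}_{\bar v})^*}(\mathcal{K},\mathbf{A})^*)_{\mathrm{tors}}\big)-\mathrm{ord}_{P}\!\big(\mathrm{char}\,(H^1_{\mathcal{F}^*}(\mathcal{K},\mathbf{A})^*)_{\mathrm{tors}}\big).$$

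To match the two displays I would argue as follows. First, $\mathrm{char}\,(H^1_{\mathcal{F}^*}(\mathcal{K},\mathbf{A})^*)_{\mathrm{tors}}$ agrees away from $\Sigma_\Lambda$ with $\mathrm{char}(M)^2$, which is $\iota$-invariant by Theorem~\ref{1.1}, so this term contributes equally to the two sides. For the leading terms, the $c$-equivariance of $\mathbf{T}$ identifies $H^1_{\mathcal{G}^*}(\mathcal{K},\mathbf{A})^*$ (``strict at $v$'') with the $\iota$-twist of the ``strict at $\bar v$'' module, and $H^1_{(\mathcal{F}_{\bar v})^*}(\mathcal{K},\mathbf{A})^*$ (``full at $v$'') with the $\iota$-twist of the ``full at $\bar v$'' module; the characteristic ideals of the two $\bar v$-modules differ only by local factors at $v$ and $\bar v$, and a further application of Poitou--Tate --- comparing both to the strict-at-$p$ Selmer module of $\mathbf{A}$, which is $\Lambda$-torsion --- shows that these local factors cancel, up to finite error. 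Putting everything together yields $\mathrm{ord}_{P}(\mathrm{coker}\,j_{P^\iota,v})=f_{v,P}$, and the statement for $\bar v$ follows by interchanging $v$ and $\bar v$ everywhere.

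The hard part, as usual in this kind of argument, will be the last matching step, and specifically the bookkeeping of the various finite error terms --- the $p$-parts of the Tamagawa numbers at the primes $w\mid N$, the local $H^0$'s at $v$ and $\bar v$, and the finite cokernel of $\log_{\omega_E}$ --- as one chases them through all the imprimitive Selmer modules; this is precisely why the identity is only asserted for height-one primes outside the finite set $\Sigma_\Lambda$. A secondary point not to be glossed over is the verification that each of the auxiliary imprimitive Selmer modules appearing above (``strict at $v$'', ``full at $v$'', and their $\bar v$-analogues) has the expected $\Lambda$-rank, which rests on the vanishing $H^1_{\bar v}(\mathcal{K},\mathbf{T})=0$, on Lemma~\ref{3.4}, and on the cotorsion-ness of $X^{anti}_{f,\mathcal{K},v}$ from Corollary~\ref{3.8}.
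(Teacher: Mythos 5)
Your two Poitou--Tate computations are correct as far as they go (and in fact reproduce what the paper later extracts in Propositions \ref{3.11} and \ref{3.12}), but the proof breaks down at exactly the point you defer to the end: the ``matching step'' is not a further bookkeeping exercise, it is the proposition itself. Indeed, cancel $\mathrm{char}(M)^2=\mathrm{char}(M^\iota)^2$ from your two displays and use $c$-conjugation (which twists characteristic ideals by $\iota$) to move everything to the prime $P$; what is left to prove is
$$\mathrm{ord}_P\bigl(H^1_{\mathcal{F}_v}(\mathcal{K},\mathbf{A})^*\bigr)\;=\;\mathrm{ord}_P\Bigl(\bigl(H^1_{\mathcal{F}_v^*}(\mathcal{K},\mathbf{A})^*\bigr)_{\mathrm{tors}}\Bigr),$$
i.e.\ a comparison of the $\mathbf{A}$-Selmer groups for the two conditions (ordinary at $v$, strict at $\bar v$) and (ordinary at $v$, relaxed at $\bar v$). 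Now run your own technique on each side: Poitou--Tate for the pair $\mathcal{F}\subseteq\mathcal{F}_v^*$ on $\mathbf{T}$ (which differ only at $\bar v$), together with Lemma \ref{3.4} and the same localization-at-$P$ chase, gives $\mathrm{ord}_P\bigl(H^1_{\mathcal{F}_v}(\mathcal{K},\mathbf{A})^*\bigr)=2\,\mathrm{ord}_P M+\mathrm{ord}_P(\mathrm{coker}\,j_{P,\bar v})$, while Poitou--Tate for $\mathcal{F}_v\subseteq\mathcal{F}$ on $\mathbf{T}$ (your second display with $v$ and $\bar v$ interchanged, using $H^1_{\mathcal{F}_v}(\mathcal{K},\mathbf{T})=0$) gives $\mathrm{ord}_P\bigl((H^1_{\mathcal{F}_v^*}(\mathcal{K},\mathbf{A})^*)_{\mathrm{tors}}\bigr)=2\,\mathrm{ord}_P M+f_{\bar v,P}$. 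So the asserted ``cancellation of local factors'' is literally $\mathrm{ord}_P(\mathrm{coker}\,j_{P,\bar v})=f_{\bar v,P}$, which is the $\bar v$-half of the statement being proved (equivalent to the $v$-half by conjugation). No further application of the Poitou--Tate sequence can break this circle: global duality only ever relates a condition on $\mathbf{T}$ to its dual condition on $\mathbf{A}$ with an unknown global-to-local cokernel as the connecting term, so each new sequence reintroduces a quantity of exactly the type ($f_{\bullet,P}$ or $\mathrm{coker}\,j$) you are trying to pin down.

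The missing ingredient is a genuinely non-formal input converting lengths on the $\mathbf{T}$-side at $P$ into lengths on the $\mathbf{A}$-side at $P^\iota$. The paper supplies it by reducing the proposition to the lemma equating $\mathrm{ord}_P\,\mathrm{coker}\{H^1_\mathcal{F}(\mathcal{K},\mathbf{T})_P\to H^1_\mathcal{F}(\mathcal{K}_v,\mathbf{T})_P\}$ with $\mathrm{ord}_{P^\iota}$ of the cokernel of $H^1_\mathcal{F}(\mathcal{K}_v,\mathbf{A})^*_{P^\iota}\to X_{P^\iota}\to\Lambda_{P^\iota}$, and proving that lemma by a specialization/control argument in the style of Howard's Theorem 2.2.10: choose auxiliary height-one primes $\mathfrak{D}=(g+p^m)$ (and $(p+T^m)$ for $P=(p)$) avoiding $\Sigma_\Lambda$ and the support of $M$, apply Howard's control lemmas 2.2.7, 2.2.8 and 1.3.3 with error terms bounded independently of $m$ (e.g.\ a uniform bound on $H^0(\mathcal{K}_v,\mathbf{A})$ and on the relevant $H^2$-terms), use that $\mathbf{T}_\mathfrak{D}$ is the $\mathfrak{m}$-adic Tate module of $\mathbf{A}_{\mathfrak{D}^\iota}$ so that both sides become inverse/direct limits of the same finite-layer Selmer data, and let $m\to\infty$. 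Your proposal contains no analogue of this specialization step, and without it (or some equivalent duality input, say a Nekov\'a\v{r}-type algebraic functional equation) the argument does not close. A secondary remark: you invoke Corollary \ref{3.8} (hence the deep divisibility from \cite{WAN}) merely to get cotorsion-ness of an auxiliary Selmer module; this is not circular, but it is far heavier than necessary --- the required rank statements follow from Lemma \ref{3.4}, the vanishing lemmas for the strict conditions, and rank counting, which is all the paper uses at this stage.
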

The proposition follows from the following lemma:
\begin{lemma}
Let $v$ be a prime of $\mathcal{K}$ above $p$. Then
$$\mathrm{ord}_P\mathrm{coker}\{H_\mathcal{F}^1(\mathcal{K},\mathbf{T})_P\rightarrow H_\mathcal{F}^1(\mathcal{K}_v,\mathbf{T})_P\}=\mathrm{ord}_{P^{\iota}}\mathrm{coker}\{H_\mathcal{F}^1(\mathcal{K}_v,\mathbf{A})^*_{P^{\iota}}\rightarrow X_{P^{\iota}}\rightarrow \Lambda_{P^{\iota}}\}$$
where the last arrow is defined as follows. Recall Howard proved the quasi-isomorphism: $X\rightarrow \Lambda\oplus M\oplus M$ with finite kernel and cokernel. That arrow is induced from composing this map with projection to $\Lambda$.
\end{lemma}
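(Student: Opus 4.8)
The lemma is an instance of the self-duality of the Poitou--Tate (Cassels) complex attached to $\mathbf{T}$ and its local conditions --- in the language of \cite[\S4]{MR} and \cite[\S1]{Howard} it records that the $v$-localisation map for $\mathbf{T}$ and the $v$-colocalisation map for $\mathbf{A}$ are adjoint --- and I would make it explicit by running the Poitou--Tate sequence of the Proposition above twice and comparing over a discrete valuation ring. To begin, fix a height one prime $P\notin\Sigma_\Lambda$, so that $\Lambda_P$ is a DVR; then $\mathrm{ord}_P$ is additive in short exact sequences of finite $\Lambda_P$-modules, $H^1_\mathcal{F}(\mathcal{K},\mathbf{T})_P$ is free of rank one (Theorem \ref{How}), $H^1(\mathcal{K}_v,\mathbf{T})_P$ is free of rank two, and both $H^1_\mathcal{F}(\mathcal{K}_v,\mathbf{T})_P$ and $H^1(\mathcal{K}_v,\mathbf{T})_P/H^1_\mathcal{F}(\mathcal{K}_v,\mathbf{T})_P$ are free of rank one (via $\log_{\omega_E}$ and the quasi-isomorphism $H^1(\mathcal{K}_v,\mathbf{T})/H^1_\mathcal{F}(\mathcal{K}_v,\mathbf{T})\to\Lambda$ of \cite{Castella}), while $X_P\cong\Lambda_P\oplus M_P\oplus M_P$. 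Both sides of the asserted identity then become valuations attached to maps of rank one free $\Lambda_P$-modules.

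Apply Poitou--Tate first to the pair $\mathcal{F}_{\bar v}\subseteq\mathcal{F}$, which agree away from $v$ and where $\mathcal{F}_{\bar v}$ is strict at $v$. Since $H^1_{\mathcal{F}_{\bar v}}(\mathcal{K}_v,\mathbf{T})=0$ and $H^1_{\mathcal{F}_{\bar v}}(\mathcal{K},\mathbf{T})\subseteq H^1_{\bar v}(\mathcal{K},\mathbf{T})=0$ (Lemma \ref{3.5}), and since the $w\mid p$ Greenberg local condition is its own orthogonal complement so that $H^1_{\mathcal{F}^*}(\mathcal{K},\mathbf{A})^*=X$ up to a finite module invisible at $P$, the sequence collapses to
$$0\to H^1_\mathcal{F}(\mathcal{K},\mathbf{T})\xrightarrow{\ \mathrm{loc}_v\ }H^1_\mathcal{F}(\mathcal{K}_v,\mathbf{T})\xrightarrow{\ \beta\ }H^1_{\mathcal{F}_{\bar v}^{*}}(\mathcal{K},\mathbf{A})^*\xrightarrow{\ q\ }X\to 0,$$
whence $f_{v,P}=\mathrm{ord}_P(\mathrm{coker}\,\mathrm{loc}_v)_P=\mathrm{ord}_P(\ker q)_P$. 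Now apply Poitou--Tate to $\mathcal{F}\subseteq\mathcal{G}$, where $\mathcal{G}$ agrees with $\mathcal{F}$ away from $v$ and is unrestricted at $v$:
$$0\to H^1_\mathcal{F}(\mathcal{K},\mathbf{T})\to H^1_\mathcal{G}(\mathcal{K},\mathbf{T})\xrightarrow{\ a\ }H^1(\mathcal{K}_v,\mathbf{T})/H^1_\mathcal{F}(\mathcal{K}_v,\mathbf{T})\xrightarrow{\ j_v\ }X\to H^1_{\mathcal{G}^{*}}(\mathcal{K},\mathbf{A})^*\to 0 .$$
Local Tate duality at $v$ identifies $H^1(\mathcal{K}_v,\mathbf{T})/H^1_\mathcal{F}(\mathcal{K}_v,\mathbf{T})$ with $H^1_\mathcal{F}(\mathcal{K}_v,\mathbf{A})^*$ and $j_v$ with the Pontryagin dual of the localisation $H^1_\mathcal{F}(\mathcal{K},\mathbf{A})\to H^1_\mathcal{F}(\mathcal{K}_v,\mathbf{A})$, i.e. with the first arrow on the right-hand side of the lemma. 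Moreover $H^1_\mathcal{G}(\mathcal{K},\mathbf{T})$ has $\Lambda$-rank one: it lies between $H^1_\mathcal{F}(\mathcal{K},\mathbf{T})$ and $H^1(\mathcal{K},\mathbf{T})$, and a $\Lambda$-adic version of \cite[Lemma 2.3.2]{Skinner}, exactly as used for Lemma \ref{3.4}, rules out rank two; hence $a$ vanishes at $P$, $j_v$ is injective at $P$, and $X_P/\mathrm{im}(j_v)\cong H^1_{\mathcal{G}^{*}}(\mathcal{K},\mathbf{A})^*_P$.

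It remains to match the two computations. By functoriality of Poitou--Tate for the chain $\mathcal{F}_{\bar v}\subseteq\mathcal{F}\subseteq\mathcal{G}$ (all comparisons taking place only at $v$) one identifies the connecting map $\beta$ with the restriction to $H^1_\mathcal{F}(\mathcal{K}_v,\mathbf{T})$ of the analogous map on all of $H^1(\mathcal{K}_v,\mathbf{T})$, identifies $q$ with the dual of $H^1_{\mathcal{F}^*}(\mathcal{K},\mathbf{A})\hookrightarrow H^1_{\mathcal{F}_{\bar v}^*}(\mathcal{K},\mathbf{A})$, and checks that $q\circ\beta$ descends to $j_v$ on $H^1(\mathcal{K}_v,\mathbf{T})/H^1_\mathcal{F}(\mathcal{K}_v,\mathbf{T})$. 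Composing $j_v$ with the projection $X_P\to\Lambda_P$ (which is the identity on the free part and kills $M_P\oplus M_P$), a length count over the DVR $\Lambda_P$ --- using the two displayed sequences, the injectivity of $j_v$ at $P$, the surjectivity of $q$, and the multiplicativity of characteristic (Fitting) ideals in short exact sequences of torsion modules --- yields $\mathrm{ord}_{P^\iota}\,\mathrm{coker}\{H^1_\mathcal{F}(\mathcal{K}_v,\mathbf{A})^*_{P^\iota}\to X_{P^\iota}\to\Lambda_{P^\iota}\}=\mathrm{ord}_P(\ker q)_P=f_{v,P}$. The involution $\iota$ enters only through the convention for the $\Lambda$-module structure on Pontryagin duals of $\mathbf{A}$-cohomology (this is precisely why $\mathbf{A}$ was defined using $-\Psi$), and is carried along consistently once one fixes the perfect pairing $\mathbf{T}\times\mathbf{A}\to\mu_{p^\infty}$ at the outset.

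I expect the real work to be in that final length count: invoking Poitou--Tate and local Tate duality is routine, but one must transport the several finite and torsion discrepancies --- the $M\oplus M$ summand of $X$, the finite kernels and cokernels of $\log_{\omega_E}$ and of the colocalisation maps, the saturation defect of $H^1_\mathcal{G}(\mathcal{K},\mathbf{T})$ inside $H^1(\mathcal{K}_v,\mathbf{T})$, and the $\iota$-twist --- in step on the two sides so that they cancel, which is where the self-duality of the Cassels--Poitou--Tate complex is really being used. The only non-formal ingredient is the rank-one-ness of $H^1_\mathcal{G}(\mathcal{K},\mathbf{T})$, which, as for Lemma \ref{3.4} and for the vanishing $H^1_{str}(\mathcal{K},\mathbf{T})=0$, ultimately rests on the non-triviality of the Heegner class $\kappa_1^{Hg}$ --- equivalently, by Proposition \ref{2.2} together with \cite{Hsieh}, on the non-vanishing of $\mathcal{L}^{BDP}_{f,\mathcal{K}}$.
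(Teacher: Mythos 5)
Your reductions in the first two steps are fine (and are close in spirit to Lemmas \ref{3.4} and \ref{3.5} of the paper): the Poitou--Tate sequence for $\mathcal{F}_{\bar v}\subseteq\mathcal{F}$ does give $\mathrm{ord}_P\,\mathrm{coker}(\mathrm{loc}_v)=\mathrm{ord}_P(\ker q)$, and in the sequence for $\mathcal{F}\subseteq\mathcal{G}$ the map $j_v$ is indeed injective at every height-one prime, since its kernel $H^1_\mathcal{G}(\mathcal{K},\mathbf{T})/H^1_\mathcal{F}(\mathcal{K},\mathbf{T})$ is torsion and embeds in a module with finite torsion. The problem is that your proof stops exactly where the lemma begins: the ``final length count'' is asserted, not performed, and if one performs it it does not close. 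Using $X_P\simeq\Lambda_P\oplus M_P\oplus M_P$ and the injectivity of $j_v$ at $P$, a snake-lemma count gives $\mathrm{ord}_P\,\mathrm{coker}(\mathrm{pr}\circ j_v)=\mathrm{ord}_P\bigl(H^1_{\mathcal{G}^*}(\mathcal{K},\mathbf{A})^*\bigr)-2\,\mathrm{ord}_P(M)$, where $\mathcal{G}^*$ is the strict-at-$v$ structure on $\mathbf{A}$; equating this with $\mathrm{ord}_P(\ker q)$ is precisely the statement $2\,\mathrm{ord}_P(M)+f_{v,P}=\mathrm{ord}_P\bigl(H^1_{\mathcal{F}_{\bar v}}(\mathcal{K},\mathbf{A})^*\bigr)$ (up to the $\iota$-twist), i.e.\ the mirror of Propositions \ref{3.11} and \ref{3.12} --- which the paper deduces \emph{from} this lemma via Proposition \ref{3.9}. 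So as sketched the argument is circular. The underlying reason is structural: both Poitou--Tate sequences are exact sequences of $\Lambda$-modules and therefore only relate orders at one and the same prime $P$, whereas the whole content of the lemma is a comparison of a $\mathbf{T}$-side valuation at $P$ with an $\mathbf{A}$-side valuation at $P^{\iota}$. Neither global duality nor the $\iota$-sesquilinearity of the local Tate pairing produces that bridge by itself; your remark that ``$\iota$ is carried along consistently'' is exactly the assertion that needs proof.

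The paper supplies this non-formal input by a specialization argument in the style of \cite[Theorem 2.2.10]{Howard}, which is entirely absent from your sketch: one bounds $H^0(\mathcal{K}_v,\mathbf{A})$ and the relevant local error terms, chooses auxiliary height-one primes $\mathfrak{D}=(g_P+p^m)$, uses the control lemmas \cite[Lemmas 2.2.7, 2.2.8, 1.3.3]{Howard} to replace $H^1_\mathcal{F}(\cdot,\mathbf{T})/\mathfrak{D}$ and $H^1_\mathcal{F}(\cdot,\mathbf{A})[\mathfrak{D}^{\iota}]$ by the Selmer groups of the specialized representations $\mathbf{T}_\mathfrak{D}$ and $\mathbf{A}_{\mathfrak{D}^{\iota}}$, and then exploits that $\mathbf{T}_\mathfrak{D}$ is the $\mathfrak{m}$-adic Tate module of $\mathbf{A}_{\mathfrak{D}^{\iota}}$ --- an identity of Galois modules, resting on the definition of $\mathbf{A}$ via $-\Psi$, which is where the passage from $P$ to $P^{\iota}$ is genuinely effected --- before letting $m\to\infty$. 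In addition, your restriction to $P\notin\Sigma_\Lambda$ drops the prime $(p)$ (and finitely many others), while the lemma is needed at every height-one prime, including $(p)$, for the integral statements under assumption (2); the paper treats $P=(p)$ separately with $\mathfrak{D}=(p+T^m)$. To rescue your route you would have to prove an independent $\iota$-symmetric self-duality statement for the $\Lambda$-adic Cassels--Poitou--Tate complex with these $v$, $\bar v$ conditions, which neither the paper nor your proposal provides; otherwise you should revert to the specialization argument.
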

\begin{proof}
We claim that $\mathrm{length}_{\mathbb{Z}_p}H^0(\mathcal{K}_v,\mathbf{A})$ is bounded by a constant depending only on $\mathbf{T}$. Take $\gamma\in I_v$ such that $\epsilon(\gamma)\not\equiv 1(\mathrm{mod}\mathfrak{m})$ take a basis
$v_1,v_2$ of $T$ such that the action of $\gamma$ is diagonal $\begin{pmatrix}a_\gamma&\\&d_\gamma\end{pmatrix}$. Suppose $d_\gamma\not\equiv 1(\mathrm{mod}\mathfrak{m})$. Then if $v\in H^0(\mathcal{K}_v,\mathbf{A})$, then $v=a_1v_1$ for some $a_1\in\Lambda^*$. Moreover by considering the action of $I_v$ we get: $Wa_1=0$. Also $(a_p-1)a_1=0$ by considering the action of an element in $H$ which induces the Frobenius modulo $I_v$. The claim is clear.

We follow the idea of \cite[Theorem 2.2.10]{Howard}. For any height one prime P, if $P$ is not $(p)$ we take a generator $g$ of it. Let $f=g+p^m$ and $\mathfrak{D}$ be the ideal generated by $f$. Suppose $\mathfrak{D}$ is not in $\Sigma_\Lambda$ and outside the support of $M$. Then if $m$ is large then we have $\Lambda/P\simeq \Lambda/\mathfrak{D}$ as rings (not as $\Lambda$-modules). Thus $\mathfrak{D}$ is a height one prime as well. In the following we use $\approx$ to mean the difference is a contant not depending on $\mathfrak{D}$ and $m$. We write LHS and RHS for the left hand side and right hand side of the equality in the proposition. Then for $p=v\bar{v}$
\begin{align*}
&&&LHS\times md&\\
&\approx&&\mathrm{length}_{\mathbb{Z}_p}\mathrm{coker}\{H_\mathcal{F}^1(\mathcal{K},\mathbf{T})/\mathfrak{D}\rightarrow H_\mathcal{F}^1(\mathcal{K}_v,\mathbf{T})/\mathfrak{D}\}&\\
&\approx&&\mathrm{length}_{\mathbb{Z}_p}\mathrm{coker}\{H_\mathcal{F}^1(\mathcal{K},\mathbf{T}_\mathfrak{D})\rightarrow H_\mathcal{F}^1(\mathcal{K}_v,\mathbf{T}_\mathfrak{D})\}&
\end{align*}
Here we used \cite[lemma 2.2.7,lemma 2.2.8]{Howard}. We also used that $H^1(\mathcal{K}_v,\mathbf{T})/\mathfrak{D}\hookrightarrow H^1(\mathcal{K}_v,\mathbf{T}/\mathfrak{D})\rightarrow H^2(\mathcal{K}_v,\mathbf{T})[\mathfrak{D}]$, and the last term is finite whose size is independent of $\mathfrak{D}$ and $m$ by the claim above and Tate duality.
\begin{align*}
&&&RHS\times md&\\
&\approx&&\mathrm{length}_{\mathbb{Z}_p}\mathrm{coker}\{H_\mathcal{F}^1(\mathcal{K}_v,\mathbf{A})^*/\mathfrak{D}^{\iota}\rightarrow X/\mathfrak{D}^{\iota}\rightarrow \Lambda/\mathfrak{D}^{\iota}\}&\\
&\approx&&\mathrm{length}_{\mathbb{Z}_p}\mathrm{ker}\{\Lambda^*[\mathfrak{D}^{\iota}]\rightarrow H_\mathcal{F}^1(\mathcal{K},\mathbf{A})[\mathfrak{D}^{\iota}]\rightarrow H_\mathcal{F}^1(\mathcal{K}_v,\mathbf{A})[\mathfrak{D}^{\iota}]\}&\\
&\approx&&\mathrm{length}_{\mathbb{Z}_p}\mathrm{ker}\{H_\mathcal{F}^1(\mathcal{K},\mathbf{A})[\mathfrak{D}^{\iota}]_{div}\rightarrow H_\mathcal{F}(\mathcal{K}_v,\mathbf{A})[\mathfrak{D}^{\iota}]\}&\\
&\approx&&\mathrm{length}_{\mathbb{Z}_p}\mathrm{ker}\{H_\mathcal{F}^1(\mathcal{K},\mathbf{A}_{\mathfrak{D}^{\iota}})_{div}\rightarrow H_\mathcal{F}^1(\mathcal{K}_v,\mathbf{A}_{\mathfrak{D}^{\iota}})\}&
\end{align*}
Here the subscript $div$ means the divisible part and again we used \cite[lemma 2.2.7, lemma 2.2.8]{Howard}. Moreover we need that the kernel of the map $H_\mathcal{F}^1(\mathcal{K}_v,\mathbf{A}[\mathfrak{D}^{\iota}])\rightarrow H_\mathcal{F}^1(\mathcal{K}_v,\mathbf{A})[\mathfrak{D}^{\iota}]$ is bounded by some constant depending only on $\mathbf{T}$.

Note that $\mathbf{T}_\mathfrak{D}$ is the $\mathfrak{m}$-adic Tate module of $\mathbf{A}_{\mathfrak{D}^{\iota}}$. We have $H_\mathcal{F}^1(\mathcal{K},\mathbf{T}_\mathfrak{D})=\varprojlim_i H_\mathcal{F}^1(\mathcal{K},\mathbf{T}_\mathfrak{D}/\mathfrak{m}^i)$ and $H_\mathcal{F}^1(\mathcal{K},\mathbf{A}_{\mathfrak{D}^{\iota}})=\varinjlim_i H_\mathcal{F}^1(\mathcal{K},\mathfrak{m}^{-i}\mathbf{T}_\mathfrak{D}/\mathbf{T}_\mathfrak{D})$ (by \cite[lemma 1.3.3]{Howard}). The similar identities are true for the local cohomology groups. So the lemma follows by taking $m$ tends to infinity.

If $P$ is $(p)$, we take the height one prime $\mathfrak{D}=(p+T^m)$ and argue similarly. Although we do not have the isomorphism $\Lambda/P\simeq \Lambda/\mathfrak{D}$, we do have $S_{\mathfrak{D}}=\Lambda/\mathfrak{D}$. See \cite[Theorem 2.2.10]{Howard}.
\end{proof}

\begin{proposition}\label{3.11}
For a height one prime $P$ of $\Lambda$, we have:
$H_{\mathcal{F}_v}^1(\mathcal{K},\mathbf{A})^*$ is $\Lambda$-torsion module and:
$$2\mathrm{ord}_PM+f_{\bar{v},P^\iota}=\mathrm{ord}_P(H_{\mathcal{F}_v}^1(\mathcal{K},\mathbf{A})^*).$$
\end{proposition}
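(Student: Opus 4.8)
The plan is to read off both assertions from a single Poitou--Tate exact sequence, using Howard's structure theorem (Theorem \ref{How}), Lemma \ref{3.4}, Castella's quasi-isomorphism $H^1(\mathcal{K}_{\bar v},\mathbf{T})/H^1_\mathcal{F}(\mathcal{K}_{\bar v},\mathbf{T})\sim\Lambda$, and Proposition \ref{3.9}. First I would record the formal input: $\mathbf{A}$ is a model of the Tate dual $\mathrm{Hom}_{\mathbb{Z}_p}(\mathbf{T},\mathbb{Q}_p/\mathbb{Z}_p(1))$ of $\mathbf{T}$, and because $E$ is Weil self-dual the condition $\mathcal{F}$ for $\mathbf{T}$ agrees with its dual $\mathcal{F}^*$ for $\mathbf{A}$ at the primes above $p$ and differs from it only by finite local terms away from $p$; hence $H^1_{\mathcal{F}^*}(\mathcal{K},\mathbf{A})^*$ and $X$ have the same pseudo-isomorphism type (so both $\sim\Lambda\oplus M\oplus M$ by Theorem \ref{How}) and the same localization at every height one prime. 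Also $(\mathcal{F}_v^*)^*=\mathcal{F}_v$. I then feed the pair $\mathcal{F}\subseteq\mathcal{F}_v^*$ — which differ only at $\bar v$, where $(\mathcal{F}_v)_{\bar v}=0$ forces $(\mathcal{F}_v^*)_{\bar v}=H^1(\mathcal{K}_{\bar v},-)$ — into the Poitou--Tate sequence of the excerpt:
$$0\to H_\mathcal{F}^1(\mathcal{K},\mathbf{T})\to H_{\mathcal{F}_v^*}^1(\mathcal{K},\mathbf{T})\to \frac{H^1(\mathcal{K}_{\bar v},\mathbf{T})}{H^1_\mathcal{F}(\mathcal{K}_{\bar v},\mathbf{T})}\xrightarrow{\ \alpha\ } H^1_{\mathcal{F}^*}(\mathcal{K},\mathbf{A})^*\to H_{\mathcal{F}_v}^1(\mathcal{K},\mathbf{A})^*\to 0.$$
Under local Tate duality $H^1(\mathcal{K}_{\bar v},\mathbf{T})/H^1_\mathcal{F}(\mathcal{K}_{\bar v},\mathbf{T})\cong H^1_\mathcal{F}(\mathcal{K}_{\bar v},\mathbf{A})^*$, and $\alpha$ becomes the Pontryagin dual of the localization map $H^1_{\mathcal{F}^*}(\mathcal{K},\mathbf{A})\to H^1_\mathcal{F}(\mathcal{K}_{\bar v},\mathbf{A})$, i.e.\ exactly the map whose cokernel Proposition \ref{3.9} computes, for the prime $\bar v$.

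For the torsion statement: Lemma \ref{3.4} says $H^1_\mathcal{F}(\mathcal{K},\mathbf{T})\to H^1_{\mathcal{F}_v^*}(\mathcal{K},\mathbf{T})$ has finite cokernel, so $\mathrm{ker}\,\alpha$ is finite; since $H^1(\mathcal{K}_{\bar v},\mathbf{T})/H^1_\mathcal{F}(\mathcal{K}_{\bar v},\mathbf{T})$ has $\Lambda$-rank one (Castella) and $H^1_{\mathcal{F}^*}(\mathcal{K},\mathbf{A})^*$ has $\Lambda$-rank one (Theorem \ref{How}), the image of $\alpha$ has rank one and therefore $H_{\mathcal{F}_v}^1(\mathcal{K},\mathbf{A})^*=\mathrm{coker}\,\alpha$ is $\Lambda$-torsion.

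For the equality of $P$-orders, fix a height one prime $P\ne(p)$ and localize at $P$; the finite modules — $\mathrm{ker}\,\alpha$, the kernel and cokernel of Castella's quasi-isomorphism, and the discrepancy between $H^1_{\mathcal{F}^*}(\mathcal{K},\mathbf{A})^*$ and $X$ — are pseudo-null and vanish. Thus $\alpha_P$ injects the free rank one module $L_P:=(H^1(\mathcal{K}_{\bar v},\mathbf{T})/H^1_\mathcal{F}(\mathcal{K}_{\bar v},\mathbf{T}))_P\cong\Lambda_P$ into $(H^1_{\mathcal{F}^*}(\mathcal{K},\mathbf{A})^*)_P\cong\Lambda_P\oplus N_P$, where $N_P$ is torsion of length $2\,\mathrm{ord}_PM$ by the pseudo-isomorphism $X\sim\Lambda\oplus M\oplus M$. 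Writing $\alpha_P(1)=(a,n)$ with $a\ne 0$, the snake lemma for the projection onto the free summand yields a short exact sequence $0\to N_P\to\mathrm{coker}\,\alpha_P\to\Lambda_P/(a)\to 0$, so
$$\mathrm{ord}_P\bigl(H_{\mathcal{F}_v}^1(\mathcal{K},\mathbf{A})^*\bigr)=2\,\mathrm{ord}_PM+\mathrm{length}_{\Lambda_P}\bigl(\Lambda_P/(a)\bigr).$$
But $\Lambda_P/(a)$ is precisely the cokernel of $\alpha_P$ followed by the projection onto the free $\Lambda_P$-part of $(H^1_{\mathcal{F}^*}(\mathcal{K},\mathbf{A})^*)_P$, i.e.\ the cokernel of the map $j_{P,\bar v}$ of Proposition \ref{3.9}, which by that proposition (applied with $P^\iota$ in place of $P$) has length $f_{\bar v,P^\iota}$ over $\Lambda_P$. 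This gives the claimed identity. The prime $P=(p)$ is handled by the deformation device used in the lemma after Proposition \ref{3.9} and in \cite[Theorem 2.2.10]{Howard}: replace $(p)$ by the height one prime $(p+W^m)$, run the above, and pass to the limit $m\to\infty$.

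The main obstacle I expect is the bookkeeping: verifying that the connecting map $\alpha$ in Poitou--Tate is literally the Pontryagin dual of the localization map of Proposition \ref{3.9}, and tracking the involution $\iota$ through the Tate duality so that it is $f_{\bar v,P^\iota}$ — and not $f_{\bar v,P}$ — that appears. Once the self-duality $\mathcal{F}^*\approx\mathcal{F}$ and the two rank-one statements are granted, the remainder is formal.
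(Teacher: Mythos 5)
Your proposal is correct and is essentially the paper's own argument: the paper's (two-line) proof takes exactly the same Poitou--Tate sequence with $\mathcal{G}^*=\mathcal{F}_v$ and $\mathcal{F}^*=\mathcal{F}$, and invokes the same inputs (Lemma \ref{3.4}, Proposition \ref{3.9}, Howard's pseudo-isomorphism $X\sim\Lambda\oplus M\oplus M$, and the rank-one local quotient at $\bar v$). You have merely made explicit the rank count, the snake-lemma bookkeeping identifying $\Lambda_P/(a)$ with $\mathrm{coker}\,j_{P,\bar v}$, and the $\iota$-twist giving $f_{\bar v,P^\iota}$, all of which the paper leaves implicit.
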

\begin{proof}
In the poitou-Tate exact sequence we take $\mathcal{G}^*=\mathcal{F}_v$ and $\mathcal{F}^*$ to be our $\mathcal{F}$. The proposition follows from lemma \ref{3.4} and proposition \ref{3.9}.
\end{proof}
\begin{proposition}\label{3.12}
For a height one prime $P$ as above we have:
$H_v^1(\mathcal{K},\mathbf{A})^*$ is $\Lambda$-torsion and:
$$2\mathrm{ord}_P(M)+f_{\bar{v},P^\iota}+f_{v,P}=\mathrm{ord}_P(H_v^1(\mathcal{K},\mathbf{A})^*).$$
\end{proposition}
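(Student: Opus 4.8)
The plan is to run the Poitou--Tate exact sequence one further time, exactly in the spirit of the proof of Proposition~\ref{3.11}, in order to pass from the Selmer condition $\mathcal{F}_v$ to the condition $v$. These two conditions agree at every place except $v$, where $\mathcal{F}_v$ imposes the Greenberg local condition while $v$ imposes none, so the comparison will be governed by a single local cohomology group at $v$, which the big logarithm map $\log_{\omega_E}$ of \cite{Castella} already identifies with a finite-index submodule of $\Lambda$.

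First I would feed into the Poitou--Tate sequence the pair of Selmer conditions $\mathcal{G}^{*}=\mathcal{F}_v$ and $\mathcal{F}^{*}=v$ for $\mathbf{A}$, that is, the pair $\mathcal{G}=\mathcal{F}_v^{*}$ and $\mathcal{F}=v^{*}$ for $\mathbf{T}$. The bookkeeping needed is: $v^{*}=\bar v$, since the orthogonal complement of the unramified condition at $w\nmid p$ is again unramified (up to finite index, which is invisible to $\mathrm{ord}_P$), the complement of the $0$ condition at $\bar v$ is all of $H^1(\mathcal{K}_{\bar v},-)$, and the complement of the full condition at $v$ is $0$; and $\mathcal{F}_v^{*}$ differs from $\bar v$ only at $v$, where $(\mathcal{F}_v^{*})_v=H^1_\mathcal{F}(\mathcal{K}_v,\mathbf{T})$, the orthogonal complement of the Greenberg condition for $\mathbf{A}$ being the Greenberg condition for $\mathbf{T}$. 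Hence, after invoking the lemma that $H^1_{\bar v}(\mathcal{K},\mathbf{T})=0$, the Poitou--Tate sequence collapses to
$$0\longrightarrow H^1_{\mathcal{F}_v^{*}}(\mathcal{K},\mathbf{T})\xrightarrow{\ \mathrm{loc}_v\ }H^1_\mathcal{F}(\mathcal{K}_v,\mathbf{T})\longrightarrow H^1_v(\mathcal{K},\mathbf{A})^{*}\longrightarrow H^1_{\mathcal{F}_v}(\mathcal{K},\mathbf{A})^{*}\longrightarrow 0,$$
where the first arrow is the localization at $v$ (classes in $H^1_{\mathcal{F}_v^{*}}(\mathcal{K},\mathbf{T})$ already satisfy the Greenberg condition there), injective because its kernel is exactly $H^1_{\bar v}(\mathcal{K},\mathbf{T})=0$.

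From here everything is formal. For the torsion assertion: $H^1_\mathcal{F}(\mathcal{K}_v,\mathbf{T})$ has $\Lambda$-rank $1$ by $\log_{\omega_E}$, and $H^1_{\mathcal{F}_v^{*}}(\mathcal{K},\mathbf{T})$ has $\Lambda$-rank $1$ by Lemma~\ref{3.4}, so $\mathrm{coker}\,\mathrm{loc}_v$ is $\Lambda$-torsion; since $H^1_{\mathcal{F}_v}(\mathcal{K},\mathbf{A})^{*}$ is $\Lambda$-torsion by Proposition~\ref{3.11}, the exact sequence forces $H^1_v(\mathcal{K},\mathbf{A})^{*}$ to be $\Lambda$-torsion. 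For the numerical identity, fix a height one prime $P$; additivity of $\mathrm{ord}_P$ along the four-term exact sequence gives $\mathrm{ord}_P(H^1_v(\mathcal{K},\mathbf{A})^{*})=\mathrm{ord}_P(\mathrm{coker}\,\mathrm{loc}_v)+\mathrm{ord}_P(H^1_{\mathcal{F}_v}(\mathcal{K},\mathbf{A})^{*})$. By Lemma~\ref{3.4} the inclusion $H^1_\mathcal{F}(\mathcal{K},\mathbf{T})\hookrightarrow H^1_{\mathcal{F}_v^{*}}(\mathcal{K},\mathbf{T})$ has finite cokernel, so after localizing at $P$ the map $\mathrm{loc}_v$ becomes the map $H^1_\mathcal{F}(\mathcal{K},\mathbf{T})_P\to H^1_\mathcal{F}(\mathcal{K}_v,\mathbf{T})_P$, whose cokernel has $\mathrm{ord}_P$ equal to $f_{v,P}$ by definition; and $\mathrm{ord}_P(H^1_{\mathcal{F}_v}(\mathcal{K},\mathbf{A})^{*})=2\,\mathrm{ord}_P(M)+f_{\bar v,P^{\iota}}$ by Proposition~\ref{3.11}. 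Adding the two contributions gives the asserted equality.

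The one step requiring genuine care is the Selmer-condition bookkeeping in the second paragraph: correctly computing $v^{*}$ and $\mathcal{F}_v^{*}$, verifying that the unique surviving local term in Poitou--Tate is $H^1_\mathcal{F}(\mathcal{K}_v,\mathbf{T})$ (so that the discrepancies between $\mathcal{F}_w$ and $\mathcal{F}_w^{*}$ at bad primes $w\nmid p$ contribute nothing to $\mathrm{ord}_P$), and checking that the resulting global-to-local map is precisely the localization at $v$ whose cokernel computes $f_{v,P}$. Everything else is a formal consequence of Lemma~\ref{3.4}, the vanishing $H^1_{\bar v}(\mathcal{K},\mathbf{T})=0$, the map $\log_{\omega_E}$ of \cite{Castella}, and Proposition~\ref{3.11}.
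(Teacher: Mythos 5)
Your argument is correct and is precisely the paper's proof, just written out in detail: apply the Poitou--Tate sequence with $\mathcal{F}^{*}=v$, $\mathcal{G}^{*}=\mathcal{F}_v$, kill the first term via $H^1_{\bar v}(\mathcal{K},\mathbf{T})=0$ (Lemma~\ref{3.5}), identify the cokernel of the localization map with $f_{v,P}$ via Lemma~\ref{3.4}, and feed in Proposition~\ref{3.11} for the last term. The Selmer-condition bookkeeping you flag as the delicate point is exactly what the paper leaves implicit, and you carry it out correctly.
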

\begin{proof}
In the Poitou-Tate exact sequence we take $\mathcal{F}^*$ to be $v$ and $\mathcal{G}^*=\mathcal{F}_v$. The proposition follows from the last proposition, lemma \ref{3.4} and lemma \ref{3.5}.
\end{proof}
Note that we have $f_{\bar{v},P^\iota}=f_{v,P}$ by considering the complex conjugation of the Galois representation. Thus we in fact proved:
\begin{equation}\label{(1)}
2\mathrm{ord}_P(M)+2f_{v,P}=\mathrm{ord}_P(H_v^1(\mathcal{K},\mathbf{A})^*)
\end{equation}

Before proving the main theorem, let us prove the following corollary:
\begin{corollary}
The $2$-variable Selmer module $X_{f,\mathcal{K}}$ defined in \cite{WAN} is $\Lambda$-torsion.
\end{corollary}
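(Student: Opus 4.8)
The plan is to obtain the $\Lambda_\mathcal{K}$-torsionness of $X_{f,\mathcal{K}}$ from the $\Lambda$-torsionness of the one-variable anticyclotomic module $X_{f,\mathcal{K},v}^{anti}$, which is already in hand, by descending along the cyclotomic variable and then matching the Selmer condition $v$ with the condition $v'$ actually used in \cite{WAN}.

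First I would record that Proposition~\ref{3.12} (equivalently equation~(\ref{(1)})) gives that $H^1_v(\mathcal{K},\mathbf{A})^*$ is a torsion $\Lambda$-module; since $\mathbf{A}=T\otimes_{\mathbb{Z}_p}\Lambda(-\Psi)\otimes_\Lambda\Lambda^*$ while $X_{f,\mathcal{K},v}^{anti}$ is built from $\Lambda(\Psi)$, the two modules differ only by the ring involution $\iota$ of $\Lambda$, which preserves torsionness, so $X_{f,\mathcal{K},v}^{anti}$ is $\Lambda$-torsion as well. I then feed this into the control Proposition of Section~\ref{control} (the one immediately preceding Corollary~\ref{3.8}), applied with $F=\mathcal{K}_\infty^-$ and $F_\infty=\mathcal{K}_\infty$, so that $\mathrm{Gal}(\mathcal{K}_\infty/\mathcal{K}_\infty^-)\cong\Gamma^+$, $I=(\gamma^+-1)$ and $\Lambda_\mathcal{K}/I\cong\Lambda$: it gives $X_{f,\mathcal{K},v}/IX_{f,\mathcal{K},v}\otimes L\simeq X_{f,\mathcal{K},v}^{anti}\otimes L$ (an honest isomorphism of $\Lambda$-modules under assumption~(2)), whence $X_{f,\mathcal{K},v}/IX_{f,\mathcal{K},v}$ is a torsion $\Lambda$-module, the $\Lambda$-rank of a finitely generated module being unchanged by inverting $p$.

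The step that requires an argument is the descent from this quotient to $X_{f,\mathcal{K},v}$ itself. Here $\Lambda_\mathcal{K}\cong\mathbb{Z}_p[[W_1,W_2]]$ is a domain and $I=(\gamma^+-1)$ is a prime ideal with $\Lambda_\mathcal{K}/I\cong\Lambda$, so $\mathrm{Supp}_{\Lambda_\mathcal{K}}(X_{f,\mathcal{K},v}/IX_{f,\mathcal{K},v})=\mathrm{Supp}_{\Lambda_\mathcal{K}}(X_{f,\mathcal{K},v})\cap V(I)$. If $X_{f,\mathcal{K},v}$ were not $\Lambda_\mathcal{K}$-torsion then, $\Lambda_\mathcal{K}$ being a domain, its annihilator would be zero and its support would be all of $\mathrm{Spec}\,\Lambda_\mathcal{K}$; intersecting with $V(I)$ would give $\mathrm{Supp}_{\Lambda_\mathcal{K}}(X_{f,\mathcal{K},v}/IX_{f,\mathcal{K},v})=V(I)=\mathrm{Spec}\,\Lambda$, so that $X_{f,\mathcal{K},v}/IX_{f,\mathcal{K},v}$ would not be $\Lambda$-torsion, contradicting the previous paragraph. (Equivalently, localizing $0\to X_{f,\mathcal{K},v}[I]\to X_{f,\mathcal{K},v}\xrightarrow{\gamma^+-1}X_{f,\mathcal{K},v}\to X_{f,\mathcal{K},v}/IX_{f,\mathcal{K},v}\to 0$ at the height-one prime $(\gamma^+-1)$ gives $\mathrm{rank}_\Lambda(X_{f,\mathcal{K},v}/IX_{f,\mathcal{K},v})=\mathrm{rank}_{\Lambda_\mathcal{K}}(X_{f,\mathcal{K},v})+\mathrm{rank}_\Lambda(X_{f,\mathcal{K},v}[I])$, so a torsion left-hand side forces the middle term to vanish.) Hence $X_{f,\mathcal{K},v}$ is $\Lambda_\mathcal{K}$-torsion, and therefore so is $X_{f,\mathcal{K}}$, which is the $v'$-Selmer module: by the Hochschild--Serre comparison of the conditions $v$ and $v'$ recorded in the Introduction these two modules agree up to the torsion phenomena described there, so they are $\Lambda_\mathcal{K}$-torsion simultaneously.

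I expect the genuinely delicate point to be precisely the first descent — concluding torsionness of $X_{f,\mathcal{K},v}$ from that of a quotient by the single relation $\gamma^+-1$ — which I handle through the support of a finitely generated module over the domain $\Lambda_\mathcal{K}$ rather than through characteristic ideals; a secondary point is to make the $v$-versus-$v'$ comparison strong enough to transfer torsionness and not merely the invariants $\mathrm{ord}_P$, i.e.\ to check that the local cohomology correction term at $\bar v$ is $\Lambda_\mathcal{K}$-cotorsion. The $\mathbb{Z}_p$-versus-$\mathcal{O}_L$ bookkeeping, needed because under assumption~(1) the control isomorphism is available only after $\otimes L$, is routine.
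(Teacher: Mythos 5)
Your argument is correct and is essentially the paper's own proof: the paper argues by contraposition, noting that if $X_{f,\mathcal{K}}$ were not torsion then the control theorem of Subsection \ref{control} would force the anticyclotomic $v$-Selmer module to be non-torsion, contradicting Proposition \ref{3.12}, which is exactly your chain run in the direct direction. Your elaborations (the descent via supports/localization at $(\gamma^+-1)$, the $\iota$-twist, and the $v$ versus $v'$ comparison) are just the details the paper leaves implicit.
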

\begin{proof}
Suppose it is not. Then by the control theorem for Selmer groups the anti-cyclotomic Selmer module with ``$v$'' Selmer condition at $p$ is not torsion, which contradicts the above proposition.
\end{proof}
\begin{theorem}
Theorem \ref{1.2} is true, i.e. under assumption (2) we have
$$\mathrm{char}_\Lambda(M)=\mathrm{char}_\Lambda(H_\mathcal{F}^1(\mathcal{K},\mathbf{T})/\Lambda\kappa_1^{Hg}).$$
Under assumption (1) this is true as ideals of $\Lambda\otimes_{\mathbb{Z}_p}\mathbb{Q}_p$.
\end{theorem}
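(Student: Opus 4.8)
One of the two divisibilities, namely $\mathrm{char}_\Lambda(M)\mid\mathrm{char}_\Lambda(H_\mathcal{F}^1(\mathcal{K},\mathbf{T})/\Lambda\kappa_1^{Hg})$, is already supplied by Theorem \ref{How}, together with the symmetry $\mathrm{char}_\Lambda(M)=\mathrm{char}_\Lambda(M)^\iota$; so the whole content is the opposite divisibility, and the plan is to extract it by matching the quantitative inputs assembled above. Write $N:=H_\mathcal{F}^1(\mathcal{K},\mathbf{T})/\Lambda\kappa_1^{Hg}$ and let $\mathfrak{f}_v:=\prod_P P^{f_{v,P}}$, the characteristic ideal of the cokernel of $\mathrm{loc}_v\colon H_\mathcal{F}^1(\mathcal{K},\mathbf{T})\to H_\mathcal{F}^1(\mathcal{K}_v,\mathbf{T})$; this cokernel is $\Lambda$-torsion because both sides have rank one over $\Lambda$ and the image of $\kappa_1^{Hg}$ is non-zero (Proposition \ref{2.2} and the non-triviality of the Heegner family, via \cite{Hsieh} under (2) and \cite{CV} under (1)). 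With this notation Corollary \ref{Castella} reads $(\mathcal{L}_{f,\mathcal{K}}^{BDP})=\mathfrak{f}_v^{2}\,\mathrm{char}_\Lambda(N)^{2}$, and the identity \eqref{(1)} reads $\mathrm{char}_\Lambda(H_v^1(\mathcal{K},\mathbf{A})^*)=\mathfrak{f}_v^{2}\,\mathrm{char}_\Lambda(M)^{2}$ — both as ideals of $\Lambda$ under assumption (2), and as ideals of $\Lambda\otimes_{\mathbb{Z}_p}\mathbb{Q}_p$ under assumption (1).

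Next I would feed in the arithmetic side. Corollary \ref{3.8} gives $\mathrm{char}_\Lambda(X_{f,\mathcal{K},v}^{anti})\subseteq(\mathcal{L}_{f,\mathcal{K}}^{BDP})$, and one identifies $\mathrm{char}_\Lambda(X_{f,\mathcal{K},v}^{anti})$ with $\mathrm{char}_\Lambda(H_v^1(\mathcal{K},\mathbf{A})^*)$ up to the involution $\iota$: the passage from the local condition $v'$ used in \cite{WAN} to the condition $v$ is the Hochschild--Serre comparison recorded in the introduction (equal $\mathrm{ord}_P$ for all height one $P$), and the passage from the convention $T\otimes\Lambda(\Psi)\otimes_\Lambda\Lambda^*$ to $\mathbf{A}=T\otimes\Lambda(-\Psi)\otimes_\Lambda\Lambda^*$ is the standard $\iota$-twist, which changes nothing after replacing $P$ by $P^\iota$. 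Combining, one obtains the divisibility of ideals of $\Lambda$ (resp. of $\Lambda\otimes_{\mathbb{Z}_p}\mathbb{Q}_p$)
$$\mathfrak{f}_v^{2}\,\mathrm{char}_\Lambda(N)^{2}=(\mathcal{L}_{f,\mathcal{K}}^{BDP})\supseteq(\mathfrak{f}_v^{\iota})^{2}\,\mathrm{char}_\Lambda(M)^{2}.$$

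The conclusion is then formal. Taking square roots in the unique factorization domain $\Lambda$ gives $\mathrm{char}_\Lambda(N)\,\mathfrak{f}_v\mid\mathrm{char}_\Lambda(M)\,\mathfrak{f}_v^{\iota}$; applying $\iota$ and using $\mathrm{char}_\Lambda(M)^\iota=\mathrm{char}_\Lambda(M)$ gives $\mathrm{char}_\Lambda(N)^\iota\,\mathfrak{f}_v^{\iota}\mid\mathrm{char}_\Lambda(M)\,\mathfrak{f}_v$; multiplying the two and cancelling $\mathfrak{f}_v\mathfrak{f}_v^{\iota}$ yields $\mathrm{char}_\Lambda(N)\,\mathrm{char}_\Lambda(N)^\iota\mid\mathrm{char}_\Lambda(M)^2$. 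Since Theorem \ref{How} also gives $\mathrm{char}_\Lambda(M)\mid\mathrm{char}_\Lambda(N)$, hence $\mathrm{char}_\Lambda(M)^2\mid\mathrm{char}_\Lambda(N)\,\mathrm{char}_\Lambda(N)^\iota$, we get $\mathrm{char}_\Lambda(N)\,\mathrm{char}_\Lambda(N)^\iota=\mathrm{char}_\Lambda(M)^2$; writing $\mathrm{char}_\Lambda(N)=\mathrm{char}_\Lambda(M)\cdot J$ then forces $JJ^\iota=\Lambda$, so $J=\Lambda$ and $\mathrm{char}_\Lambda(M)=\mathrm{char}_\Lambda(N)$. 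Under assumption (1) this whole chain runs verbatim in $\Lambda\otimes_{\mathbb{Z}_p}\mathbb{Q}_p$, which is all that is claimed there; under (2) it runs integrally, using in addition that $\mathcal{L}_{f,\mathcal{K}}^{BDP}$ has vanishing $\mu$-invariant (\cite{Hsieh}), that the Manin constant and the modular degree are prime to $p$ by the square-free conductor hypothesis, and that the relevant Tamagawa factors are $p$-adic units by the non-split multiplicative reduction hypothesis, so that no spurious power of $p$ enters Corollary \ref{Castella} or the control theorem of Section \ref{control}.

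The step I expect to require the most care — rather than being a genuine obstacle — is the identification in the second paragraph: pinning $X_{f,\mathcal{K},v}^{anti}$ against $H_v^1(\mathcal{K},\mathbf{A})^*$ up to finite error and the involution $\iota$, reconciling the $v$-versus-$v'$ local conditions and the two sign conventions for $\Psi$, and then the $p$-integral bookkeeping under assumption (2). Everything else is an assembly of the statements already proved above together with the purely formal manipulation of characteristic ideals described in the last paragraph.
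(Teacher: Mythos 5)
Your proposal is correct and follows essentially the same route as the paper: the paper's proof likewise combines Theorem \ref{How}, Corollary \ref{Castella}, equation (\ref{(1)}) and Corollary \ref{3.8} to get $\mathrm{ord}_P(M)+f_{v,P}\geq \mathrm{ord}_P(H_\mathcal{F}^1(\mathcal{K},\mathbf{T})/\Lambda\kappa_1^{Hg})+f_{v,P}$ at each height one prime and then cancels $f_{v,P}$, invoking Howard's divisibility for the reverse inequality. Your ideal-theoretic packaging with the explicit $\iota$-symmetrization (to reconcile the $\Psi$ versus $-\Psi$ conventions and the $v$ versus $v'$ conditions) is just a slightly more careful rendering of the same argument, which the paper carries out prime-by-prime without comment on the twist.
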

\begin{proof}
Fix a height one prime $P$. By (\ref{(1)}), corollary \ref{Castella} and corollary \ref{3.8} we have:
$$2\mathrm{ord}_PM+2f_{v,P}\geq 2\mathrm{ord}_P(H_\mathcal{F}^1(\mathcal{K},\mathbf{T})/\kappa_1^{Hg})+2f_{v,P}.$$
The theorem follows.
\end{proof}
Now let us prove the two variable main conjecture.
\begin{theorem}\label{main conjecture}
Suppose assumption (2) is true. Then $\mathcal{L}_{f,\mathcal{K}}\in\Lambda_\mathcal{K}$. Moreover:
$$\mathrm{char}_{\Lambda_\mathcal{K}}(X_{f,\mathcal{K}})=(\mathcal{L}_{f,\mathcal{K}}).$$
\end{theorem}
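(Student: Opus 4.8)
The plan is to first extract an \emph{exact} one-variable anticyclotomic main conjecture from the computations above, and then to lift it to two variables by confronting it with the one divisibility of \cite{WAN} through the control theorem of \S\ref{control}. For \emph{Step 1} (the exact anticyclotomic main conjecture), for every height one prime $P$ of $\Lambda$ --- including $P=(p)$ --- combine the identity (\ref{(1)}), Theorem \ref{1.2} (which under assumption (2) equates $\mathrm{ord}_P(M)$ with $\mathrm{ord}_P(H^1_\mathcal{F}(\mathcal{K},\mathbf{T})/\Lambda\kappa_1^{Hg})$ for all $P$), and Corollary \ref{Castella} (which under assumption (2) gives $\mathrm{ord}_P(\mathcal{L}^{BDP}_{f,\mathcal{K}})=2f_{v,P}+2\,\mathrm{ord}_P(H^1_\mathcal{F}(\mathcal{K},\mathbf{T})/\Lambda\kappa_1^{Hg})$ for all $P$): substituting gives $\mathrm{ord}_P(H^1_v(\mathcal{K},\mathbf{A})^*)=2\,\mathrm{ord}_P(M)+2f_{v,P}=\mathrm{ord}_P(\mathcal{L}^{BDP}_{f,\mathcal{K}})$. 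Since $H^1_v(\mathcal{K},\mathbf{A})^*$ agrees with $X^{anti}_{f,\mathcal{K},v}$ up to the involution $\iota$ and the sign convention on $\Psi$, and all characteristic ideals in sight are $\iota$-stable, this yields $\mathrm{char}_\Lambda(X^{anti}_{f,\mathcal{K},v})=(\mathcal{L}^{BDP}_{f,\mathcal{K}})$ as ideals of $\Lambda$; the Hochschild--Serre argument recalled above transfers this to the $v'$-variant, i.e. to the anticyclotomic Selmer module of \cite{WAN}. Recall also that $\mathcal{L}^{BDP}_{f,\mathcal{K}}$ is nonzero and has trivial $\mu$-invariant by \cite{Hsieh}.

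\emph{Step 2 (descent modulo $I$).} We have shown above that $X_{f,\mathcal{K}}$ is $\Lambda_\mathcal{K}$-torsion, so $\mathfrak{C}:=\mathrm{char}_{\Lambda_\mathcal{K}}(X_{f,\mathcal{K}})=\mathrm{char}_{\Lambda_\mathcal{K}}(X_{f,\mathcal{K},v})$ is a genuine nonzero ideal of $\Lambda_\mathcal{K}$. Under assumption (2) the control proposition of \S\ref{control} gives an isomorphism $X_{f,\mathcal{K},v}/IX_{f,\mathcal{K},v}\simeq X^{anti}_{f,\mathcal{K},v}$ with $I=(\gamma^+-1)\subseteq\Lambda_\mathcal{K}$; since that argument is tight (there $\ker s=0$ and $\mathrm{coker}\,s$ is finite), reduction of characteristic ideals modulo $I$ is exact here, and Step 1 then gives $\mathfrak{C}\bmod I=(\mathcal{L}^{BDP}_{f,\mathcal{K}})$. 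In particular $I$ is prime to $\mathfrak{C}$; and because $\mathcal{L}^{BDP}_{f,\mathcal{K}}\in\Lambda$ is the finite nonzero specialization of $\mathcal{L}_{f,\mathcal{K}}$ at $\gamma^+\mapsto 1$, the element $\mathcal{L}_{f,\mathcal{K}}$ has no pole along $I$ and $\mathcal{L}_{f,\mathcal{K}}\bmod I=\mathcal{L}^{BDP}_{f,\mathcal{K}}$.

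\emph{Step 3 (conclusion).} By part (ii) of the main theorem of \cite{WAN}, valid under assumption (1) and hence under (2), $\mathfrak{C}\otimes\mathbb{Q}_p\subseteq(\mathcal{L}_{f,\mathcal{K}})\otimes\mathbb{Q}_p$; as these are principal fractional ideals over the ring $\Lambda_\mathcal{K}\otimes\mathbb{Q}_p$ we may write $\mathfrak{C}=(\mathcal{L}_{f,\mathcal{K}})\,\mathfrak{a}$ with $\mathfrak{a}$ an integral ideal (a priori only of $\Lambda_\mathcal{K}\otimes\mathbb{Q}_p$). Reducing modulo $I$ and feeding in Step 2 gives $(\mathcal{L}^{BDP}_{f,\mathcal{K}})\,(\mathfrak{a}\bmod I)=(\mathcal{L}^{BDP}_{f,\mathcal{K}})$; since $\mathcal{L}^{BDP}_{f,\mathcal{K}}\neq 0$ with $\mu$-invariant $0$, and $\Lambda_\mathcal{K}$ is local with $\Lambda_\mathcal{K}/I=\Lambda$ sharing the same residue field (so an element of $\Lambda_\mathcal{K}$ that is a unit modulo $I$ is already a unit), one deduces $\mathfrak{a}=\Lambda_\mathcal{K}$. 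Hence $\mathfrak{C}=(\mathcal{L}_{f,\mathcal{K}})$ as fractional ideals, and as $\mathfrak{C}\subseteq\Lambda_\mathcal{K}$ this forces both $\mathcal{L}_{f,\mathcal{K}}\in\Lambda_\mathcal{K}$ and $\mathrm{char}_{\Lambda_\mathcal{K}}(X_{f,\mathcal{K}})=(\mathcal{L}_{f,\mathcal{K}})$.

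I expect the main obstacle to lie in Steps 2 and 3: one must verify that the control isomorphism of \S\ref{control} is sharp enough that no pseudo-null contribution becomes codimension one after reducing modulo $I$ (so that $\mathfrak{C}\bmod I$ genuinely equals $\mathrm{char}_\Lambda(X^{anti}_{f,\mathcal{K},v})$ on the nose), and one must rule out a hidden power of $p$ in passing from the \cite{WAN} divisibility --- stated only up to $\mathbb{Q}_p$ --- to the integral equality. The latter is exactly where the vanishing of the $\mu$-invariant of $\mathcal{L}^{BDP}_{f,\mathcal{K}}$ from \cite{Hsieh} is indispensable, and is the reason the two-variable statement, including $\mathcal{L}_{f,\mathcal{K}}\in\Lambda_\mathcal{K}$, comes out without any ambiguity by powers of $p$.
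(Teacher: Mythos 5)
Your overall skeleton is the same as the paper's: combine the two-variable divisibility of \cite{WAN} with exact anticyclotomic information coming from the Heegner-point computations, via the control theorem at $\gamma^+=1$, in the style of \cite[Theorem 3.6.5]{SU}. But there is a genuine gap at the end of Step 3, and it is exactly the point your last paragraph worries about. From $\mathrm{char}(X_{f,\mathcal{K}})\otimes\mathbb{Q}_p\subseteq(\mathcal{L}_{f,\mathcal{K}})\otimes\mathbb{Q}_p$ you only get $\mathrm{char}(X_{f,\mathcal{K}})=(\mathcal{L}_{f,\mathcal{K}})\,\mathfrak{a}$ with $\mathfrak{a}$ an ideal of $\Lambda_\mathcal{K}\otimes\mathbb{Q}_p$, and your specialization argument at most shows that $\mathfrak{a}\bmod I$ is the unit ideal of $\Lambda\otimes\mathbb{Q}_p$ (with the $\mu=0$ input one can even get a unit of $\Lambda$). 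The inference ``unit mod $I$ $\Rightarrow$ unit,'' which is what you invoke via locality of $\Lambda_\mathcal{K}$, is valid only for elements of $\Lambda_\mathcal{K}$; it fails for generators of $\mathfrak{a}$ that are merely in $\Lambda_\mathcal{K}\otimes\mathbb{Q}_p$. Concretely, nothing in your argument rules out $\mathfrak{a}$ generated by $(\gamma^+-1+p)/p$: this element is $\equiv 1$ modulo $I$ (so all your mod-$I$ identities are untouched), yet it is neither a unit of nor even an element of $\Lambda_\mathcal{K}$, and it would correspond to $\mathcal{L}_{f,\mathcal{K}}$ having a denominator supported on a divisor meeting $\gamma^+=1$ non-trivially --- in which case neither $\mathcal{L}_{f,\mathcal{K}}\in\Lambda_\mathcal{K}$ nor the stated equality follows. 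In other words, a one-variable specialization cannot by itself see a denominator of the two-variable $p$-adic $L$-function. The paper closes exactly this hole with an extra ingredient you do not have: it first replaces $\mathcal{L}_{f,\mathcal{K}}$ by $\tilde{\mathcal{L}}_{f,\mathcal{K}}\in\Lambda_\mathcal{K}$ (denominator cleared, $p$-powers stripped) and proves $\mathrm{char}_{\Lambda_\mathcal{K}}(X_{f,\mathcal{K}})=(\tilde{\mathcal{L}}_{f,\mathcal{K}})$, and then compares $\mathcal{L}_{f,\mathcal{K}}$ with Hida's Rankin--Selberg $p$-adic $L$-function $\mathcal{L}^{Hida}_{f,\mathcal{K}}$ (the ratio of periods being a Katz $p$-adic $L$-function in $\Lambda\setminus\{0\}$) to show the denominator $E$ lies in $\Lambda$, i.e.\ involves the anticyclotomic variable only; only then does $(\mathcal{L}^{BDP}_{f,\mathcal{K}})=(\tilde{\mathcal{L}}^{anti}_{f,\mathcal{K}})$, together with $\mu(\mathcal{L}^{BDP}_{f,\mathcal{K}})=0$, force $E=p^m$ up to a unit and hence $(\tilde{\mathcal{L}}_{f,\mathcal{K}})=(\mathcal{L}_{f,\mathcal{K}})$ with $\mathcal{L}_{f,\mathcal{K}}\in\Lambda_\mathcal{K}$.

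Two smaller remarks. First, in Step 2 your justification that $\mathrm{char}_{\Lambda_\mathcal{K}}(X_{f,\mathcal{K},v})\bmod I$ equals $\mathrm{char}_\Lambda(X^{anti}_{f,\mathcal{K},v})$ ``because the control argument is tight'' conflates two different issues: $\ker s=0$ and finiteness of $\mathrm{coker}\,s$ control the passage $X/IX\to X^{anti}$, but the possible discrepancy between $\mathrm{char}(X)\bmod I$ and $\mathrm{char}(X/IX)$ comes from pseudo-null submodules of $X$ over $\Lambda_\mathcal{K}$ and is not addressed by control. Fortunately you only need the always-valid divisibility $\mathrm{char}_\Lambda(X/IX)\subseteq(\mathrm{char}_{\Lambda_\mathcal{K}}(X)\bmod I)$, which together with your Step 1 equality $\mathrm{char}_\Lambda(X^{anti}_{f,\mathcal{K},v})=(\mathcal{L}^{BDP}_{f,\mathcal{K}})$ is enough to show $\mathfrak{a}\bmod I$ is trivial; so this overclaim is harmless once rephrased. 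Second, Step 1 itself is fine and is essentially a reorganization of the paper's chain (the paper instead carries only the containment $\mathrm{char}_\Lambda(X^{anti})\supseteq(\mathcal{L}^{BDP}_{f,\mathcal{K}})\supseteq(\tilde{\mathcal{L}}^{anti}_{f,\mathcal{K}})$ into the two-variable argument and deduces all equalities at the end), modulo the same implicit treatment of the involution $\iota$ that the paper also glosses over.
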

\begin{remark}
After I finished the first draft B. Howard informed me that Francesc Castella obtained similar results in \cite{Castella} via a seemingly different proof.
\end{remark}
\begin{proof}
Let $\tilde{\mathcal{L}}_{f,\mathcal{K}}$ be the $\mathcal{L}_{f,\mathcal{K}}$ multiplied by its denominator (note that $\Lambda_\mathcal{K}$ is uniquely factorable ring) and then remove all $p$-power divisors (so $\tilde{\mathcal{L}}_{f,\mathcal{K}}\in\Lambda_\mathcal{K}$). By the main theorem of \cite{WAN} we have the two variable
\begin{equation}\label{2}
(\tilde{\mathcal{L}}_{f,\mathcal{K}})\supseteq \mathrm{char}_{\Lambda_\mathcal{K}}(X_{f,\mathcal{K}})
\end{equation}
(note that $\mathrm{ord}_p(\tilde{\mathcal{L}}_{f,\mathcal{K}})=0$).

Write $\tilde{\mathcal{L}}_{f,\mathcal{K}}^{anti}$ for $\tilde{\mathcal{L}}_{f,\mathcal{K}}$ evaluated at $\gamma^+=1$. By \cite{Hsieh} $\mathcal{L}^{BDP}_{f,\mathcal{K}}$ has $\mu$-invariant $0$. So it is not hard to see from the construction that $\mathcal{L}_{f,\mathcal{K}}^{BDP}|\tilde{\mathcal{L}}_{f,\mathcal{K}}^{anti}$. Then by Theorem \ref{How}, corollary \ref{Castella}, and (\ref{(1)}) we already have:
\begin{equation}\label{3}
\mathrm{char}_\Lambda(X_{f,\mathcal{K}}^{anti})\supseteq (\mathcal{L}_{f,\mathcal{K}}^{BDP})\supseteq (\tilde{\mathcal{L}}_{f,\mathcal{K}}^{anti})
\end{equation}
as ideals of $\Lambda$.
So we deduce that
$$(\tilde{\mathcal{L}}_{f,\mathcal{K}})=\mathrm{char}_{\Lambda_\mathcal{K}}(X_{f,\mathcal{K}})$$
from (\ref{2}), (\ref{3}) in the same way as \cite[Theorem 3.6.5]{SU} and all above ``$\supseteq$'' are $=$.  In particular $(\mathcal{L}_{f,\mathcal{K}}^{BDP})=(\tilde{\mathcal{L}}_{f,\mathcal{K}}^{anti})$. Now we do some study on the possible denominator of $\mathcal{L}_{f,\mathcal{K}}$. There is another construction of a similar $p$-adic $L$-function $\mathcal{L}^{Hida}_{f,\mathcal{K}}\in\Lambda_\mathcal{K}\otimes_\Lambda\mathrm{Frac}\Lambda$ in \cite{Hida} using Rankin-Selberg method for $f$ and the Hida family $\mathbf{g}$ of normalized CM eigenforms associated to characters of $\Gamma_\mathcal{K}$, and such that the weight of $f$ is lower than the specializations of $\mathbf{g}$ at interpolation points. The interpolation formula is almost the same as $\mathcal{L}_{f,\mathcal{K}}$ except that the period is given by the Petersson inner product of specializations of $\mathbf{g}$ instead of the CM period $\Omega_\infty$. The ratio of the two periods is given by a Katz $p$-adic $L$-function which is an element in $\Lambda\backslash \{0\}$. Comparing these two $p$-adic $L$-functions we see that the denominator for $\mathcal{L}_{f,\mathcal{K}}$ must be generated by an element $E$ in $\Lambda\backslash\{0\}$ (in other word it involves the anticyclotomic variable only). Recall that $\tilde{\mathcal{L}}_{f,\mathcal{K}}=\frac{\mathcal{L}_{f,\mathcal{K}}\cdot E}{p^m}$ for some $m\geq 0$ and note $\mathcal{L}_{f,\mathcal{K}}|_{\gamma^+=1}=\mathcal{L}^{BDP}_{f,\mathcal{K}}$, then  $(\mathcal{L}_{f,\mathcal{K}}^{BDP})=(\tilde{\mathcal{L}}_{f,\mathcal{K}}^{anti})$ implies $E=p^m$ up to a unit in $\Lambda$. So $(\tilde{\mathcal{L}}_{f,\mathcal{K}})=(\mathcal{L}_{f,\mathcal{K}})$. This proves the theorem.
\end{proof}
Finally we prove the following strengthening of the result of \cite{Skinner}.
\begin{theorem}
Let $E/\mathbb{Q}$ be an elliptic curve with square-free conductor $N$ and good ordinary reduction at $p$. Let $\mathcal{K}$ be an imaginary quadratic field such that $p$ and $2$ are split and such that $E[p]|_{G_\mathcal{K}}$ is irreducible. Suppose moreover assumption (1) is true. If the Selmer group $H_{\mathcal{F}}^1(\mathcal{K}, E[p^\infty])$ has corank one. Then the Heegner point $\kappa_1$ is not torsion and thus the vanishing order of $L_\mathcal{K}(f,1)$ is exactly one.
\end{theorem}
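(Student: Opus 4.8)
The plan is to run the argument Iwasawa-theoretically: translate the corank hypothesis, via control theorems, into the assertion $\mathrm{ord}_{P_0}\mathrm{char}_\Lambda(M)=0$ for the height one prime $P_0=(\gamma^--1)$ of $\Lambda$ cutting out the trivial anticyclotomic character, and then read off the non-triviality of the Heegner class from the equality in Theorem~\ref{1.2}. Observe first that $P_0\neq(p)$ and $P_0^\iota=P_0$, so Theorem~\ref{1.2} is available at $P_0$ even under assumption (1). Recall also that $\kappa_1^{Hg}\bmod P_0=\kappa_1\in H^1(\mathcal{K},T)$ is the Kummer image of the Heegner point $y_\mathcal{K}\in E(\mathcal{K})$, and that $\kappa_1^{Hg}\neq0$ in $H^1_\mathcal{F}(\mathcal{K},\mathbf{T})$ because some anticyclotomic specialization of it is non-torsion (\cite{CV}, as used already above); hence $H^1_\mathcal{F}(\mathcal{K},\mathbf{T})/\Lambda\kappa_1^{Hg}$ is a genuine torsion $\Lambda$-module.

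First I would record two control computations. On the side of $\mathbf{T}$: since $T|_{G_\mathcal{K}}$ is irreducible, $H^0(\mathcal{K},T)=0$, so from $0\to\mathbf{T}\xrightarrow{\gamma^--1}\mathbf{T}\to T\to0$ the map $H^1(\mathcal{K},\mathbf{T})/P_0\to H^1(\mathcal{K},T)$ is injective; checking that the Selmer conditions match up to finite groups at all places (the non-split primes $q\mid N$ allowed under assumption (1) being handled exactly as in the proof of Theorem~\ref{How}) this restricts to a map $H^1_\mathcal{F}(\mathcal{K},\mathbf{T})/P_0\to H^1_\mathcal{F}(\mathcal{K},T)$ with finite kernel. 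On the side of $\mathbf{A}$: one has $\mathbf{A}[\gamma^--1]\cong E[p^\infty]$, and a Greenberg-type control theorem as in \S\ref{control} shows that $H^1_\mathcal{F}(\mathcal{K},E[p^\infty])$ and $H^1_\mathcal{F}(\mathcal{K},\mathbf{A})[\gamma^--1]$ differ by finite groups (the error being bounded by $H^0(\mathcal{K},\mathbf{A})$ and by Tamagawa-type factors at bad primes, all finite under our hypotheses); dualizing, $\mathrm{corank}_{\mathbb{Z}_p}H^1_\mathcal{F}(\mathcal{K},E[p^\infty])=\mathrm{rank}_{\mathbb{Z}_p}(X/P_0X)$. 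Feeding in the pseudo-isomorphism $X\sim\Lambda\oplus M\oplus M$ of Theorem~\ref{How} (pseudo-null $\Lambda$-modules being finite, this passes to $\mathbb{Z}_p$-ranks of $P_0$-coinvariants) together with the elementary fact that, for a torsion $\Lambda$-module $N$, one has $\mathrm{rank}_{\mathbb{Z}_p}(N/P_0N)>0$ if and only if $P_0\mid\mathrm{char}_\Lambda(N)$, one obtains $\mathrm{corank}_{\mathbb{Z}_p}H^1_\mathcal{F}(\mathcal{K},E[p^\infty])=1+2\,\mathrm{rank}_{\mathbb{Z}_p}(M/P_0M)$, which equals $1$ precisely when $\mathrm{ord}_{P_0}\mathrm{char}_\Lambda(M)=0$.

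Now suppose $H^1_\mathcal{F}(\mathcal{K},E[p^\infty])$ has corank one. By the last computation $\mathrm{ord}_{P_0}\mathrm{char}_\Lambda(M)=0$, hence by Theorem~\ref{1.2} (an equality of ideals of $\Lambda\otimes_{\mathbb{Z}_p}\mathbb{Q}_p$ under assumption (1), and $P_0\neq(p)$) also $\mathrm{ord}_{P_0}\mathrm{char}_\Lambda\!\bigl(H^1_\mathcal{F}(\mathcal{K},\mathbf{T})/\Lambda\kappa_1^{Hg}\bigr)=0$. Localizing at the discrete valuation ring $\Lambda_{P_0}$, the module $H^1_\mathcal{F}(\mathcal{K},\mathbf{T})_{P_0}$ is free of rank one (since $H^1_\mathcal{F}(\mathcal{K},\mathbf{T})$ is torsion-free of rank one), and the vanishing of the order forces $\kappa_1^{Hg}$ to generate it; therefore the image of $\kappa_1^{Hg}$ in $H^1_\mathcal{F}(\mathcal{K},\mathbf{T})_{P_0}/P_0\cong\mathbb{Q}_p$ is nonzero, so $\kappa_1^{Hg}\bmod P_0$ has infinite order in $H^1_\mathcal{F}(\mathcal{K},\mathbf{T})/P_0$. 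Applying the injective control map on $\mathbf{T}$, its image $\kappa_1$ has infinite order in $H^1_\mathcal{F}(\mathcal{K},T)$, i.e. $y_\mathcal{K}$ is non-torsion. Finally, assumption (1) forces the global root number of $E/\mathcal{K}$ to be $-1$, so $L_\mathcal{K}(f,1)=0$; and $\widehat{h}(y_\mathcal{K})>0$ together with the Gross--Zagier formula gives $L_\mathcal{K}'(f,1)\neq0$, whence $\mathrm{ord}_{s=1}L_\mathcal{K}(f,s)=1$.

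I expect the main obstacle to be the careful bookkeeping in the two control theorems — in particular verifying that the local conditions at the non-split primes dividing $N$ (present under assumption (1)) match up to finite groups when passing between $\mathbf{A}$ and $E[p^\infty]$, and between $\mathbf{T}$ and $T$, which is the same inertia subtlety as in the proof of Theorem~\ref{How} — and uniformly bounding the relevant global and local $H^0$'s via the irreducibility of $E[p]|_{G_\mathcal{K}}$. The conceptual content, however, is entirely carried by Theorem~\ref{1.2}: it is the $\Lambda$-adic equality at $P_0$ that replaces the injectivity hypothesis on the local-at-$p$ map imposed in \cite{Skinner} by the automatic non-triviality of the $\Lambda$-adic Heegner class, as anticipated in the remark following the theorem statement.
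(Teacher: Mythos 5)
Your proposal is correct and follows essentially the same route as the paper: a control theorem identifying the corank of $H^1_\mathcal{F}(\mathcal{K},E[p^\infty])$ with the $\mathbb{Z}_p$-rank of $X/(\gamma^--1)X$, Howard's pseudo-isomorphism $X\sim\Lambda\oplus M\oplus M$ to force $M/(\gamma^--1)M$ finite, Theorem \ref{1.2} to transfer this to $H^1_\mathcal{F}(\mathcal{K},\mathbf{T})/\Lambda\kappa_1^{Hg}$, an injective specialization map on the $\mathbf{T}$-side to conclude $\kappa_1$ is non-torsion, and Gross--Zagier for the order of vanishing. Your explicit remarks that $P_0=(\gamma^--1)\neq(p)$ (so the assumption (1) version of Theorem \ref{1.2}, valid only in $\Lambda\otimes_{\mathbb{Z}_p}\mathbb{Q}_p$, suffices) and the bookkeeping at non-split primes are just more careful renderings of steps the paper treats briefly.
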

\begin{proof}
We consider the behavior of specialization of the Selmer group $H_{\mathcal{F}}^1$ from $\mathcal{K}_\infty^-$ to $\mathcal{K}$. By our discussion on Greenberg's method in subsection \ref{control} we know that the surjection
$$ X_{f,\mathcal{K}}/IX_{f,\mathcal{K}}\twoheadrightarrow H_\mathcal{F}^1(\mathcal{K}, E[p^\infty])^*$$
has finite kernel. By theorem 1.1 and theorem 1.2 we have $M\otimes\Lambda/I$ and thus $\frac{H_\mathcal{F}^1(\mathcal{K}, \mathbf{T})}{\Lambda\kappa_1^{Hg}}\otimes \frac{\Lambda}{I}$ is torsion. Thus $\frac{H_\mathcal{F}^1(\mathcal{K},\mathbf{T})}{IH_\mathcal{F}^1(\mathcal{K},\mathbf{T})+\Lambda\kappa_1^{Hg}}$ is torsion. One can easily check that there is an injection
$$H_\mathcal{F}^1(\mathcal{K},\mathbf{T}\otimes_{\mathbb{Z}_p}\mathbb{Q}_p)/IH_\mathcal{F}^1(\mathcal{K},\mathbf{T}\otimes_{\mathbb{Z}_p}\mathbb{Q}_p)\hookrightarrow H_\mathcal{F}^1(\mathcal{K},T\otimes_{\mathbb{Z}_p}\mathbb{Q}_p).$$
In sum the image $\kappa_1$ of $\kappa_1^{Hg}$ is not torsion in $H^1_\mathcal{F}(\mathcal{K},T\otimes_{\mathbb{Z}_p}\mathbb{Q}_p)$. So $\kappa_1$ is not torsion.
\end{proof}

\textsc{Xin Wan, Department of Mathematics, Columbia University, New York, NY, 10027, USA}\\
\indent \textit{E-mail Address}: xw2295@math.columbia.edu\\

\end{document}